\newtheorem{theorem}{Theorem}[section]
\newtheorem{lemma}[theorem]{Lemma}
\newtheorem*{convention}{Convention}
\newtheorem{remark}[theorem]{Remark}
\newtheorem{example}[theorem]{Example}
\newtheorem{proposition}[theorem]{Proposition}
\newtheorem{corollary}[theorem]{Corollary}
\newtheorem{defn}[theorem]{Definition}
\newtheorem{prop}[theorem]{Proposition}
\newtheorem{Ques}[theorem]{Question}
\newcommand\ackname{Acknowledgements}
  \newenvironment{acknowledgements}{%
\else
  
\fi
\makeatother

\newcommand{\mb}[1]{\mathbb{#1}}

\newcommand{\mtc}[1]{\mathcal{#1}}

\DeclareMathOperator{\Gr}{Gr}
\DeclareMathOperator{\Bl}{Bl}

\DeclareMathOperator{\Pic}{Pic}
\DeclareMathOperator{\Hom}{Hom}
\DeclareMathOperator{\Coh}{Coh}
\DeclareMathOperator{\Ext}{Ext}

\DeclareMathOperator{\rk}{rank}

\DeclareMathOperator{\che}{ch}

\DeclareMathOperator{\ext}{ext}

\DeclareMathOperator{\DLP}{DLP}
\DeclareMathOperator{\Aut}{Aut}

\DeclareMathOperator{\ev}{ev}
\DeclareMathOperator{\coev}{coev}

\DeclareMathOperator{\gr}{gr}
\DeclareMathOperator{\Quot}{Quot}

\title{Moduli Spaces of Sheaves on General Blow-ups of $\mb{P}^2$}\date{}
\author{Junyan Zhao}

\begin{document}
\maketitle

\begin{abstract}
Let $X$ be the blow-up of $\mb{P}^2$ along $m$ general points, and $A=H-\sum \varepsilon_iE_i$ be a generic polarization with $0<\varepsilon_i\ll1$. We classify the Chern characters which satisfy the weak Brill-Noether property, i.e. a general sheaf in $M_A({\bf{v}})$, the moduli space of slope stable sheaves with Chern character ${\bf{v}}$, has at most one non-zero cohomology. We further give a necessary and sufficient condition for the existence of stable sheaves. Our strategy is to specialize to the case when the $m$ points are collinear.

\end{abstract}

\tableofcontents

\section{Introduction}

Let $X$ be a complex smooth projective surface with an ample $\mb{R}$-divisor $A$, and $M_{X,A}({\bf{v}})$ be the moduli space of Gieseker $A$-semistable sheaves of character ${\bf{v}}$. Among all the fundamental problems about moduli spaces of sheaves, there are two extremely interesting ones:

\begin{enumerate}[(1)]
    \item compute the cohomology of a general element in an irreducible component of $M_{X,A}({\bf{v}})$, and
    \item classify the Chern characters ${\bf{v}}$ for which $M_{X,A}({\bf{v}})$ is non-empty.
\end{enumerate}

In this paper, we classify the non-special characters and stable characters on the blow-up of $\mb{P}^2$ along $m$ general points by specializing to the case of $m$ collinear points and applying results from deformation theory.

\subsection{Prioritary sheaves}

In contrast to (semi)stable sheaves, the families of prioritary sheaves are easier to construct. Let $D$ be an effective divisor on $X$. A torsion-free coherent sheaf $\mtc{E}$ on $X$ is called \emph{$D$-prioritary} if
$$\Ext^2(\mtc{E},\mtc{E}(-D))=0.$$
For a character ${\bf{v}}\in K(X)_{\mb{Q}}$, let $\mtc{P}_D({\bf{v}})\subseteq \Coh({\bf{v}})$ be the open substack of $D$-prioritary sheaves. If $X$ is some blow-up of $\mb{P}^2$ with an exceptional divisor $E$ and a fibre $F:=H-E$, where $H$ is the pullback of a hyperplane section on $\mb{P}^2$, then stack $\mtc{P}_{F}({\bf{v}})$ of $F$-prioritary sheaves is irreducible by a theorem of Walter \cite{Wal98}. Let $E_i$ be the exceptional divisors. It is natural to take the polarization $A$ to be $H-\sum \varepsilon_iE_i$ with $0<\varepsilon_i\ll1$. Then every $\mu_A$-semistable sheaf is automatically $H$-prioritary. Therefore, if $M_{X,A}({\bf{v}})$ is nonempty, then it is an open dense substack of $\mtc{P}_H({\bf{v}})$. We prove the following result.

\begin{theorem} \textup{(Proposition \ref{2})}
Let $X$ be the blow-up of $\mb{P}^2$ along $m$ collinear points. Let ${\bf{v}}=(r,c_1,\Delta)$ be a Chern character such that $r\geq 2$ and $\Delta\geq 0$. Then the stack $\mtc{P}_{X,F}({\bf{v}})$ is non-empty, and a general sheaf $\mtc{E}$ parameterized by $\mtc{P}_{X,F}({\bf{v}})$ admits a resolution of the form $$0\longrightarrow\mtc{O}_X(-2H+D)^{\alpha}\oplus\mtc{O}_X(-H+D)^{\beta}\stackrel{s}{\longrightarrow}\bigoplus_{i=1}^m\mtc{O}_X(-E_i+D)^{\gamma_i}\oplus\mtc{O}_X(D)^{\delta}\longrightarrow \mtc{E}\longrightarrow0,$$ or $$0\longrightarrow\mtc{O}_X(-2H+D)^{\alpha}\stackrel{s}{\longrightarrow}\mtc{O}_X(-H+D)^{\beta}\oplus\bigoplus_{i=1}^m\mtc{O}_X(-E_i+D)^{\gamma_i}\oplus\mtc{O}_X(D)^{\delta}\longrightarrow \mtc{E}\longrightarrow0$$ for some divisor $D$. In particular, the stack $M_{X,A}({\bf{v}})$ is unirational when it is non-empty.
\end{theorem}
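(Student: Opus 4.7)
The plan is to realize the stated resolution as the degeneration of a Beilinson-type (Gaeta-type) spectral sequence associated with the full strong exceptional collection
$$\mtc{O}_X(-2H+D),\ \mtc{O}_X(-H+D),\ \mtc{O}_X(-E_1+D),\ldots,\mtc{O}_X(-E_m+D),\ \mtc{O}_X(D)$$
on $X$. The collinearity hypothesis is essential here: the class $H-\sum E_i$ is represented by the strict transform of the line through the $m$ points, and the resulting intersection geometry is what forces the Hom-vanishings between consecutive pieces, i.e.\ makes the collection (strongly) exceptional.

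First, I would fix a divisor $D$ (determined modulo $H,E_1,\ldots,E_m$ by the slope of $\mtc{E}$) so that, for a general $F$-prioritary sheaf $\mtc{E}$ of character ${\bf{v}}$, the groups $\Ext^j(\mtc{E},\mtc{O}_X(?))$ (or their Serre duals) vanish outside a single degree as $\mtc{O}_X(?)$ ranges over the collection. The two shapes of resolution in the statement correspond to the two possible configurations of surviving cohomology degrees, distinguished by the sign of a numerical quantity such as $\chi(\mtc{E}(-H-D))$. Once these vanishings are in place, the Beilinson complex collapses to a two-term complex, and the exponents $\alpha,\beta,\gamma_i,\delta$ are forced by Riemann--Roch applied to $\chi(\mtc{E}\otimes \mtc{O}_X(?)^{\vee})$ for each object in the dual collection.

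Second, I would establish the required vanishings on $\mtc{P}_{X,F}({\bf{v}})$ by semicontinuity: since $\mtc{P}_{X,F}({\bf{v}})$ is irreducible by Walter's theorem, it suffices to exhibit one $F$-prioritary sheaf of character ${\bf{v}}$ with the right cohomology. Such a model can be built as an iterated elementary modification of a direct sum of line bundles from the exceptional collection, adjusting one invariant at a time; the non-emptiness statement for $r\geq 2,\ \Delta\geq 0$ is handled in tandem. Unirationality of $M_{X,A}({\bf{v}})$ then follows immediately: on the open substack where the resolution exists, the stack is dominated by an open subset of the (affine, hence rational) $\Hom$-scheme between the fixed source and target, modulo the reductive group $\prod\GL$ acting on source and target.

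The main obstacle is twofold. First, one must correctly partition the character lattice into chambers according to which of the two resolutions applies, and check that the chambers cover all ${\bf{v}}$ with $r\geq 2,\ \Delta\geq 0$; this is numerical but delicate because of the many exceptional classes $E_i$. Second, and harder, one must show that a generic morphism $s$ of the stated form is injective with torsion-free $F$-prioritary cokernel of the correct Chern character. This is a determinantal-type computation: one bounds the dimensions of the degeneracy loci of $s$ and checks that $\Ext^2(\Coker(s),\Coker(s)(-F))=0$ for generic $s$, which by a spectral sequence run on the resolution itself reduces to explicit cohomological vanishings among the line bundles in the exceptional collection.
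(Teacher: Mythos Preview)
Your approach is valid but genuinely different from the paper's, and in fact you have outlined \emph{two} proofs and conflated them. The paper never invokes a Beilinson spectral sequence. Instead it runs entirely through what you call your ``second obstacle'': after choosing $D$ so that the exponents $\alpha,\beta,\gamma_i,\delta$ come out non-negative (numerically, $d_i$ is fixed so $0\le b_i<r$ and $c$ is the largest integer with $\chi({\bf v}(-D))\ge 0$), the paper observes that $\mtc{H}om(\mtc{U},\mtc{V})$ is globally generated with $\rk\mtc{V}-\rk\mtc{U}=r\ge 2$, so a general $s$ has locally free cokernel; a direct $\Ext$-chase on the resolution shows this cokernel is $H$-prioritary; and finally the Kodaira--Spencer map $\Hom(\mtc{U},\mtc{V})\to\Ext^1(\mtc{E},\mtc{E})$ is checked to be surjective using $\Ext^1(\mtc{U},\mtc{U})=\Ext^1(\mtc{V},\mtc{V})=\Ext^2(\mtc{V},\mtc{U})=0$. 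Completeness plus Walter's irreducibility of $\mtc{P}_{X,F}({\bf v})$ then forces the general sheaf to lie in the image, which is exactly the statement that it admits the resolution.

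Your Beilinson route would also work, and it is the standard alternative (this is how Gaeta-type resolutions are usually produced, e.g.\ in the Coskun--Huizenga papers you are implicitly following). Its advantage is conceptual: the resolution is not pulled out of thin air but is the degeneration of a canonical complex. Its cost is that you must verify, for some model sheaf, all the cohomological vanishings that force the $E_1$-page to collapse to two columns; this is more bookkeeping than the paper's three-line Kodaira--Spencer check. Note also that if you carry out your ``second obstacle'' in full (generic $s$ is injective with prioritary torsion-free cokernel) and add the completeness argument, you have reproduced the paper's proof and the Beilinson machinery becomes redundant. Conversely, if you go the Beilinson way, you do not need to analyze generic $s$ at all: the spectral sequence already hands you an injective $s$ with cokernel $\mtc{E}$, and dominance of the $\Hom$-scheme is automatic. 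So pick one of the two arguments and drop the other.
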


\subsection{Higher rank Brill-Noether theory}

The higher rank Brill-Noether theory aims to classify non-special Chern characters (Definition \ref{1}) on a polarized surface $(X,A)$. The applications have been found in classifying globally generated Chern characters (\cite{CH16}), describing effective cones of moduli spaces (\cite{Hui16}\cite{CHW17}), and classifying Chern characters with non-empty
moduli spaces $M_{X,A}({\bf{v}})$ (\cite{CH21}).

The classification of non-special Chern characters was worked out for $\mb{P}^2$ in \cite{GH98}, and for Hirzebruch surfaces, including $\mb{P}^1\times\mb{P}^1$ and $\mb{F}_1$, in \cite{CH16}. For del Pezzo surfaces and arbitrary
blow-ups, partial results were obtained in \cite{CH20} under the condition that the Chern character ${\bf{v}}$ satisfies $\chi({\bf{v}})=0$. For del Pezzo surfaces of degree $\geq 4$, a classification of all non-special Chern characters is given in \cite{LZ19}. In this paper, we classify the Chern characters for the blow-ups of $\mb{P}^2$ along $m$ general points which satisfies the weak Brill-Noether property.

\begin{defn}\label{1}
We say that the moduli space $M_{X,A}({\bf{v}})$ (resp. the moduli stack $\mtc{P}_H({\bf{v}})$) satisfies the \textup{weak Brill-Noether property} (or is \textup{non-special}) if there exists a sheaf $\mtc{E}\in M_{X,A}({\bf{v}})$ (resp. $\mtc{E}\in \mtc{P}_H({\bf{v}})$) such that $H^i(X,\mtc{E})\neq0$ for at most one $i$. In this case, we also say that the character ${\bf{v}}$ satisfies the \textup{weak Brill-Noether property} (or is \textup{non-special}).
\end{defn} 

Let $X$ be the blow-up of $\mb{P}^2$ along $m$ distinct points $p_1,...,p_m$, and $E_1,...,E_m$ be the corresponding exceptional divisors. When $p_1,...,p_m$ are collinear, we have the following.

\begin{theorem} \textup{(Theorem \ref{3})}
Let $X$ be the blow-up of $\mb{P}^2$ along $m$ collinear points. Let ${\bf{v}}=(r,\nu,\Delta)$ be a Chern character such that $r({\bf{v}})\geq 2$, and $\Delta\geq 0$. Write $\nu=aH-\sum b_iE_i$, and define $\nu':=aH-\sum_{b_i>0}b_iE_i$. If $\nu$ satisfies that $(\nu.E_i)\geq-1$ and $(\nu'.L)\geq-1$, then a general sheaf parameterized by $\mtc{P}_{F_1,...,F_m,H}({\bf{v}})$ is non-special.

\end{theorem}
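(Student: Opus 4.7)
The plan is to apply the resolution from Proposition \ref{2} to a general sheaf and then reduce the weak Brill-Noether statement to a cohomology calculation for each line bundle term, using the hypotheses $(\nu.E_i)\geq -1$ and $(\nu'.L)\geq -1$ to arrange vanishing in all but one degree.

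First, since non-specialness is an open condition on $\mtc{P}_{F_1,\ldots,F_m,H}({\bf{v}})$, it suffices to exhibit a single sheaf whose cohomology is concentrated in one degree. By Proposition \ref{2}, a general member fits in a resolution
$$0\to\mtc{O}_X(-2H+D)^{\alpha}\oplus\mtc{O}_X(-H+D)^{\beta}\to\bigoplus_{i=1}^{m}\mtc{O}_X(-E_i+D)^{\gamma_i}\oplus\mtc{O}_X(D)^{\delta}\to\mtc{E}\to 0$$
(or the second variant, chosen according to the sign of $\chi({\bf{v}})$) for a suitable divisor $D$. The idea is to choose $D$ so that every line bundle $\mtc{O}_X(-2H+D)$, $\mtc{O}_X(-H+D)$, $\mtc{O}_X(-E_i+D)$, $\mtc{O}_X(D)$ has cohomology supported in a single common degree $k\in\{0,1,2\}$; the long exact sequence then immediately forces $H^i(X,\mtc{E})=0$ for $i\neq k$.

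Next, I would compute the cohomology of these line bundles explicitly on $X$. Since the $m$ points are collinear, $X$ is toric, so vanishing theorems for line bundles on toric varieties apply; alternatively one can push forward by $\pi\colon X\to\mb{P}^2$, handling each $E_i$ in turn, or by the $\mb{P}^1$-fibration determined by one of the $F_i$. The point is to show that the desired vanishing happens precisely when $D$ is chosen so that $D.E_i$ and $D.L$ (and $(D-H).L$, $(D-2H).L$, etc.) fall in appropriate ranges. The condition $(\nu.E_i)\geq -1$, which unwinds to $b_i\geq -1$, is what allows $D$ to be chosen with $(D-E_i).E_i\geq -1$ for each $i$, ensuring vanishing of the cohomology of $\mtc{O}_X(-E_i+D)$ along the exceptional locus. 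The role of $\nu'$ is to drop the exceptional divisors $E_i$ with $b_i\leq 0$, which contribute nothing to the line $L$ but would otherwise spuriously weaken the bound; the condition $(\nu'.L)\geq -1$ then exactly matches what one needs for the restriction of each term of the resolution to $L$ to have vanishing $H^1$, and hence controls the cohomology in the direction of the pencil of lines through the $m$ collinear points.

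The hard part will be the numerical bookkeeping. One must split into cases according to the sign of $\chi({\bf{v}})$, choose the appropriate resolution (the first form when enough sections are available, the second otherwise), determine the ranks $\alpha,\beta,\gamma_i,\delta$ in terms of $(a,b_i,\Delta)$, and check that the two hypotheses are exactly what makes the bundles in the resolution have cohomology in a common degree. Boundary cases such as $(\nu.E_i)=-1$ or $(\nu'.L)=-1$ (where the vanishing only barely holds) and the subtle role of the indices $i$ with $b_i\leq 0$ will require the most care, and are the real reason the correct formulation of the second hypothesis uses $\nu'$ rather than $\nu$.
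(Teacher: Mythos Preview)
Your approach differs substantially from the paper's, which does not argue via the resolution of Proposition~\ref{2}. The paper proceeds in two steps. When $\nu=\nu'$ (every $b_i\geq 0$), it exhibits the ``good bundle'' $\bigoplus\mtc{O}(D_j)$ built by the distribution algorithm in the subsection on elementary transformations, with rank $r$, total slope $\nu$, and minimal discriminant; under $(\nu'.L)\geq -1$ each summand has no higher cohomology by the line-bundle computations of Section~3, and then elementary modifications at general points raise $\Delta$ while preserving non-specialness and $H^2=0$. When $\nu\neq\nu'$, say $-1\leq b_m<0$, the paper contracts $E_m$ via $\pi:X_m\to X_{m-1}$: the condition $(\nu.E_m)\geq -1$ is exactly what makes the general $\mtc{E}$ balanced on $E_m$, so $R^{>0}\pi_*\mtc{E}=0$ and $H^*(\mtc{E})=H^*(\pi_*\mtc{E})$; one then checks that $\pi_*$ is dominant onto the prioritary stack on $X_{m-1}$, reducing to one fewer exceptional divisor. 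This separation cleanly explains the asymmetric roles of the two hypotheses, whereas in your sketch both are absorbed into an unspecified numerical verification.

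Your outline also has a genuine gap beyond the deferred bookkeeping. The long-exact-sequence step ``if every line bundle has cohomology concentrated in a single degree $k$ then so does $\mtc{E}$'' is only valid for $k=0$: from $0\to\mtc{U}\to\mtc{V}\to\mtc{E}\to 0$ with $H^{>0}(\mtc{U})=H^{>0}(\mtc{V})=0$ one indeed gets $H^{>0}(\mtc{E})=0$, but for $k=1$ one finds $H^0(\mtc{E})=\ker\bigl(H^1(\mtc{U})\to H^1(\mtc{V})\bigr)$, which need not vanish for a general map. Thus the case $\chi({\bf{v}})<0$ is not handled; the two variants of the resolution in Proposition~\ref{2} are determined by the sign of $r+\alpha-\delta-\sum\gamma_i$, not by the sign of $\chi({\bf{v}})$, so switching variants does not fix this. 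Moreover, $D$ is not a free parameter you may tune: its $E_i$-coefficients are forced by $0\leq\gamma_i<r$ and its $H$-coefficient by the sign constraints on $\alpha,\delta$. (Minor aside: $X$ is not toric once $m\geq 3$, since a torus-invariant line in $\mb{P}^2$ contains only two fixed points.)
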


We will see in Section 9 that stable sheaves can deform to nearby surfaces. Applying the semicontinuity of the dimensions of cohomology groups, one obtains the following result on general blow-ups.

\begin{theorem} \textup{(Theorem \ref{4})}
Let ${\bf{v}}=(r,\nu,\Delta)$ be a character such that $\nu({\bf{v}})=\alpha H-\sum \beta_iE_i$ with $-1\leq \beta_i\leq 0$, $\alpha-\sum\beta_i\geq-1$, and $\Delta({\bf{v}})\geq0$. Let $A=H-\sum \varepsilon_iE_i$ be a polarization on the blow-up of $\mb{P}^2$ along $m$ general points with $0<\varepsilon_i\ll1$. Let $X_{h_0}$ be the blow-up of $\mb{P}^2$ along $m$ collinear points, and $X_{h}$ be the blow-up of $\mb{P}^2$ along $m$ general points. If $M_{\mtc{X}_{h_0},A}({\bf{v}})$ is non-empty, then ${\bf{v}}$ is a character on $\mtc{X}_h$ satisfying the weak Brill-Noether property.
\end{theorem}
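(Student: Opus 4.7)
The plan is a specialization--deformation argument: use Theorem \ref{3} to produce a non-special stable sheaf on the collinear blow-up, spread it in a flat family over the space parametrizing $m$-tuples of points in $\mathbb{P}^2$, and conclude via upper semicontinuity of cohomology. First I would construct a smooth family $\pi\colon \mathcal{X}\to B$ of blow-ups of $\mathbb{P}^2$, where $B$ is a smooth connected neighborhood of the collinear locus inside the variety of ordered $m$-tuples of points in $\mathbb{P}^2$, with central fiber $\mathcal{X}_{h_0}$ and generic fiber isomorphic to $\mathcal{X}_h$. The classes $H$, $E_i$, $F_i=H-E_i$ and the polarization $A$ all extend to relative classes over $B$.

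On the central fiber, the non-emptiness assumption supplies a $\mu_A$-stable sheaf of character $\mathbf{v}$. A Serre-duality and slope-comparison argument shows that every $\mu_A$-semistable sheaf on $\mathcal{X}_{h_0}$ is both $H$-prioritary and $F_i$-prioritary for each $i$, since $(-H+K_X)\cdot A<0$ and $(-F_i+K_X)\cdot A<0$ when $0<\varepsilon_i\ll 1$. Hence $M_{\mathcal{X}_{h_0},A}(\mathbf{v})$ is a non-empty open substack of the irreducible stack $\mathcal{P}_{F_1,\ldots,F_m,H}(\mathbf{v})$. The inequalities $-1\leq\beta_i\leq 0$ and $\alpha-\sum\beta_i\geq -1$ are arranged to match the hypotheses of Theorem \ref{3}, which asserts that a general sheaf in $\mathcal{P}_{F_1,\ldots,F_m,H}(\mathbf{v})$ is non-special. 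Intersecting these two open dense conditions in the irreducible stack yields a stable non-special sheaf $\mathcal{F}_0$ on $\mathcal{X}_{h_0}$.

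Next I would invoke the deformation result of Section 9 to lift $\mathcal{F}_0$ to a flat family $\mathcal{F}$ on $\mathcal{X}$ over an étale neighborhood of $0\in B$, whose fibers are $\mu_A$-stable sheaves of character $\mathbf{v}$. Upper semicontinuity of $h^i$ applied to this flat family then gives $h^i(\mathcal{X}_b,\mathcal{F}_b)\leq h^i(\mathcal{X}_{h_0},\mathcal{F}_0)$ for $b$ near $0$ and each $i$, so at most one cohomology group of $\mathcal{F}_b$ is non-zero. Choosing $b$ so that $\mathcal{X}_b$ is a blow-up along $m$ general points completes the proof.

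The principal obstacle is the deformation step, deferred to Section 9: one must show that the obstructions to lifting $\mathcal{F}_0$ along the family $\pi$, which a priori live in $\Ext^2(\mathcal{F}_0,\mathcal{F}_0)$ via the Kodaira--Spencer pairing with $H^1(\mathcal{X}_{h_0},T_{\mathcal{X}_{h_0}})$, vanish in at least one direction transverse to the collinear locus, and that $\mu_A$-stability is preserved on an open dense subset of such a deformation. Once this is granted, the production of a non-special $\mathcal{F}_0$ and the concluding semicontinuity argument are essentially formal.
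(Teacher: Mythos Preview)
Your approach is essentially the paper's own: produce a non-special stable sheaf on the collinear blow-up by intersecting the open stable locus with the open non-special locus inside the irreducible stack $\mathcal{P}_{F_1,\dots,F_m,H}(\mathbf{v})$ (via Theorem~\ref{3}), deform it over the family of blow-ups, and apply semicontinuity. The paper states this in the paragraph preceding Theorem~\ref{4} and appeals to Corollary~9.2 for the deformation step.

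One remark: what you flag as the ``principal obstacle'' is in fact immediate. Since $\mathcal{F}_0$ is $\mu_A$-stable and $K_X\cdot A=-3+\sum\varepsilon_i<0$, Serre duality gives $\Ext^2(\mathcal{F}_0,\mathcal{F}_0)\cong\Hom(\mathcal{F}_0,\mathcal{F}_0(K_X))^{*}=0$. The entire obstruction space vanishes, so $\mathcal{F}_0$ deforms unobstructedly in \emph{every} direction over the base, not merely in one transverse direction; there is no need to analyze the Kodaira--Spencer pairing. This is exactly the content of the paper's Corollary~9.2. Preservation of $\mu_A$-stability is then just openness of stability in flat families with a fixed relative polarization.
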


\subsection{Exceptional sheaves}

Recall that an exceptional bundle is a simple vector bundle $\mtc{E}$ with $\Ext^i(\mtc{E},\mtc{E})=0$ for $i>0$. On $\mb{P}^2$, there is a beautiful description of the Chern characters of exceptional bundles \cite{LeP97}. When $X$ is a del Pezzo surface, it is known that every torsion-free exceptional sheaf is locally free, constructible (see Definition 6.9) and stable with respect to the anticanonical polarization. See \cite{KO95} for a thorough study of exceptional objects on del Pezzo surfaces.

On the blow-up of $\mb{P}^2$ along $m$ distinct points, we don't know whether there are non-constructible exceptional bundles. However, we still have the following result on the stability of constructible ones.

\begin{theorem} \textup{(Theorem \ref{5})}
Let $X$ be the blow-up of $\mb{P}^2$ along $m$ distinct collinear points, and $A=H-\sum \varepsilon_iE_i$ with $0<\varepsilon_i\ll 1$ a generic polarization. If $\mtc{E}$ is a constructible exceptional bundle, then it is $\mu_A$-stable.
\end{theorem}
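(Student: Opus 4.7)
The plan is to proceed by strong induction on the mutation complexity of $\mtc{E}$ in the sense of Definition 6.9. The base case consists of line bundles $\mtc{O}_X(D)$, which are $\mu_A$-stable trivially since they have rank one. For the inductive step, $\mtc{E}$ fits into a short exact sequence
$$0\longrightarrow \mtc{A}\longrightarrow \mtc{E}\longrightarrow \mtc{B}\longrightarrow 0\quad\text{or}\quad 0\longrightarrow \mtc{E}\longrightarrow \mtc{A}\longrightarrow \mtc{B}\longrightarrow 0$$
coming from a mutation of an exceptional pair, where $\mtc{A}$ and $\mtc{B}$ are direct sums of constructible exceptional bundles of strictly smaller complexity, hence $\mu_A$-stable by the induction hypothesis.

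Next I would suppose for contradiction that there exists a saturated destabilizing subsheaf $\mtc{F}\subsetneq \mtc{E}$ with $0<\rk\mtc{F}<\rk\mtc{E}$ and $\mu_A(\mtc{F})\geq\mu_A(\mtc{E})$, and take $\mtc{F}$ of minimal rank so that it is itself $\mu_A$-semistable. Chasing $\mtc{F}$ through the mutation triangle yields $0\to\mtc{F}\cap\mtc{A}\to\mtc{F}\to\mtc{G}\to 0$ with $\mtc{G}\hookrightarrow \mtc{B}$ in the first case, or simply $\mtc{F}\hookrightarrow\mtc{A}$ in the second. Because $\mtc{A}$ and $\mtc{B}$ are $\mu_A$-stable, every nonzero proper subsheaf of either has strictly smaller slope; combining this with the mutation slope ordering $\mu_A(\mtc{A})<\mu_A(\mtc{E})<\mu_A(\mtc{B})$, the usual slope seesaw forces $\mu_A(\mtc{F})<\mu_A(\mtc{E})$, a contradiction. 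Since $A$ is generic with $0<\varepsilon_i\ll 1$, no wall of instability for $\che(\mtc{E})$ separates $A$ from $H$, so $\mu_A$-semistability upgrades to $\mu_A$-stability.

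I expect the main obstacle to be verifying that at each stage of the induction the mutation triangle has the correct slope ordering with respect to $A$, and that both $\mtc{A}$ and $\mtc{B}$ are genuinely $\mu_A$-stable rather than merely semistable. On del Pezzo surfaces this is classical via the anticanonical polarization \cite{KO95}, but here $X$ is in general not del Pezzo --- the proper transform of the line through the $m$ collinear points is a $(1-m)$-curve --- and $A=H-\sum \varepsilon_i E_i$ is not proportional to $-K_X$. The decisive inputs will be the collinear configuration together with the $F$-prioritary structure theorem (Proposition \ref{2}), which controls how constructible exceptional bundles decompose along the pencils in classes $F_i=H-E_i$, and the smallness and genericity of the $\varepsilon_i$, which allow strict slope inequalities detected by $H$ to propagate to $A$ while preserving strictness. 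Making this compatibility uniform across all mutation steps is the principal technical content of the proof.
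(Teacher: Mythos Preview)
Your inductive seesaw argument has a genuine gap, and the paper's proof proceeds along an entirely different line.

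First, the mutation triangles are not of the shape you wrote. If $\mtc{E}$ is obtained as a left mutation $L_{\mtc{E}_1}\mtc{E}_2$, it sits as a \emph{kernel}
\[
0\longrightarrow \mtc{E}\longrightarrow \mtc{E}_1^{\oplus n}\longrightarrow \mtc{E}_2\longrightarrow 0,
\]
and if it is a right mutation $R_{\mtc{E}_2}\mtc{E}_1$ it is a \emph{cokernel}
\[
0\longrightarrow \mtc{E}_1\longrightarrow \mtc{E}_2^{\oplus n}\longrightarrow \mtc{E}\longrightarrow 0.
\]
In neither case is $\mtc{E}$ the middle term of an extension $0\to\mtc{A}\to\mtc{E}\to\mtc{B}\to0$. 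More seriously, even with the correct sequences the seesaw does not close. In the kernel case one has $\mu_A(\mtc{E})<\mu_A(\mtc{E}_1)<\mu_A(\mtc{E}_2)$, so a putative destabilizer $\mtc{F}\subset\mtc{E}\subset\mtc{E}_1^{\oplus n}$ only yields $\mu_A(\mtc{F})\le\mu_A(\mtc{E}_1)$, which is \emph{larger} than $\mu_A(\mtc{E})$ and gives no contradiction. The dual problem occurs in the cokernel case. On del Pezzo surfaces one rescues this with additional orthogonality and Hom-vanishing arguments specific to the anticanonical polarization; those arguments do not transfer directly to $A=H-\sum\varepsilon_iE_i$ on a non--del Pezzo blow-up, and nothing in your sketch supplies a substitute.

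The paper avoids induction on mutations altogether. It first shows (Proposition~5.1) that any exceptional bundle $\mtc{E}$ balanced along each $E_i$ has $\pi_*\mtc{E}$ semi-exceptional on $\mb{P}^2$, hence $\mu_H$-semistable there. Constructible bundles are balanced on every $E_i$. Then if $\mtc{F}\subset\mtc{E}$ destabilizes for $A=H-\sum\varepsilon_iE_i$, writing $\nu(\mtc{E})=\alpha H-\sum\beta_iE_i$ and $\nu(\mtc{F})=\alpha'H-\sum\beta_i'E_i$, the inequality $\mu_A(\mtc{F})\ge\mu_A(\mtc{E})$ with $\varepsilon_i\to0$ forces $\alpha'\ge\alpha$; but $\pi_*\mtc{F}\subset\pi_*\mtc{E}$ has $H$-slope $\alpha'$, contradicting the stability of $\pi_*\mtc{E}$ on $\mb{P}^2$. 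The entire argument is a reduction to the known $\mb{P}^2$ theory via pushforward, not an internal induction on the helix.
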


\subsection{Existence of stable sheaves}

On $\mb{P}^2$, the existence of stable sheaves is controlled by the exceptional bundles. Dr\'{e}zet and Le Potier construct a function $\delta:\mb{R}\rightarrow\mb{R}$ whose graph in the $(\mu,\Delta)$-plane, which completely determines when $M_H({\bf{v}})$ is nonempty. If $(\mu({\bf{v}}),\delta({\bf{v}}))$ lies above the graph of $\delta$, then $M_H({\bf{v}})$ is nonempty. Otherwise, $M_H({\bf{v}})$ is empty or ${\bf{v}}$ is semi-exceptional. See \cite{DLP85} and \cite{LeP97} for the argument and \cite{CH21} for a figurative illustration. The classification of semistable characters on $\mb{P}^1\times\mb{P}^1$ is worked out in \cite{Rud94}. For Hirzebruch surfaces and generic polarizations, the classification of non-empty moduli spaces is worked out by \cite{CH21}. The existence theorems for del Pezzo surfaces of degree $\geq 3$ with the anti-canonical polarization is given by \cite{LZ19}. 

As a consequence of the classification on $\mb{P}^2$, we are able to construct a family of stable bundles on general blow-ups of $\mb{P}^2$ by analyzing the special blow-up along collinear points. We define the weak DL-condition for a Chern character ${\bf{v}}$ in Section 8. Roughly speaking, it means that for a constructible exceptional bundle $\mtc{E}$ whose slope is close to the slope of ${\bf{v}}$, the Euler characteristic $\chi({\bf{v}},\mtc{E})$ or $\chi(\mtc{E},{\bf{v}})$ has the expected sign.

\begin{theorem} \textup{(Theorem \ref{6})}
Let $X$ be the blow-up of $\mb{P}^2$ along $m$ general points, $A=H-\sum \varepsilon_iE_i$ be a polarization with $0<\varepsilon_i\ll1$, and ${\bf{v}}=(r,\nu,\Delta)$ be a character such that $\nu({\bf{v}})=\alpha H-\sum \beta_iE_i$ with $-1\leq \beta_i\leq 0$ and $\Delta({\bf{v}})\geq0$. Suppose that $\alpha\notin \mathfrak{C}$, where $\mathfrak{C}$ is the set of $H$-slopes on $\mb{P}^2$ of exceptional bundles. If ${\bf{v}}$ satisfies the weak DL-condition, then $M_{X,A}({\bf{v}})\neq \emptyset$.
\end{theorem}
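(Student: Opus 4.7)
The plan is to prove non-emptiness of $M_{X,A}({\bf{v}})$ by specialization to the collinear case $X_{h_0}$, followed by a Dr\'ezet--Le Potier style analysis using constructible exceptional bundles. First I would invoke the deformation argument already developed for the weak Brill--Noether theorem on general blow-ups: a $\mu_A$-stable sheaf on $X_{h_0}$ spreads out to a family of $\mu_A$-stable sheaves over an open neighborhood of the special point in the base of the family $\mtc{X}\to B$ whose general fiber is $X_h$, since $\mu_A$-stability is an open condition in flat families. This reduces the problem to showing $M_{X_{h_0},A}({\bf{v}})\neq\emptyset$.

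On $X_{h_0}$, the resolution theorem for $F$-prioritary sheaves (the first theorem in the introduction) shows that $\mtc{P}_F({\bf{v}})$ is non-empty and irreducible, with a general element admitting an explicit resolution by a short collection of line bundles. Every $\mu_A$-semistable sheaf on $X_{h_0}$ is $F$-prioritary, so it suffices to prove that a general $\mtc{E}\in\mtc{P}_F({\bf{v}})$ is $\mu_A$-stable. The hypothesis $-1\leq\beta_i\leq 0$ gives $(\nu\cdot E_i)\geq -1$, and in the relevant regime one has $(\nu'\cdot L)\geq -1$, so the weak Brill--Noether theorem on collinear blow-ups applies and a general $\mtc{E}\in\mtc{P}_F({\bf{v}})$ is non-special: at most one of $H^0(\mtc{E})$, $H^1(\mtc{E})$, $H^2(\mtc{E})$ is nonzero.

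Suppose for contradiction that no $\mtc{E}\in\mtc{P}_F({\bf{v}})$ is $\mu_A$-stable. Following the Dr\'ezet--Le Potier strategy, the slopes and discriminants of the Harder--Narasimhan factors of a general $\mtc{E}$, controlled by Bogomolov's inequality, force the existence of a constructible exceptional bundle $\mtc{V}$ on $X_{h_0}$ whose $H$-slope on $\mb{P}^2$ approximates $\alpha$. By the stability theorem for constructible exceptional bundles, $\mtc{V}$ is itself $\mu_A$-stable, so slope ordering kills $\Hom(\mtc{V},\mtc{E})$ or $\Hom(\mtc{E},\mtc{V})$ (whichever side destabilization would have to live on); combined with $\Ext^2$-vanishing coming from $F$-prioritarity and the non-speciality above, the Euler characteristic $\chi(\mtc{V},{\bf{v}})$ or $\chi({\bf{v}},\mtc{V})$ is forced to a sign forbidden by the weak DL-condition. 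The hypothesis $\alpha\notin\mathfrak{C}$ excludes the boundary case in which ${\bf{v}}$ would be semi-exceptional and the relevant Euler characteristic could vanish.

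The main obstacle is the third step: systematically producing the obstructing exceptional bundle $\mtc{V}$. On $\mb{P}^2$ this relies on the fractal structure of exceptional slopes---every real number is a limit of exceptional slopes---together with a careful tracking of the Kronecker-type moduli in which destabilizing extensions can live. On $X_{h_0}$ the fibration $X_{h_0}\to\mb{P}^1$ allows much of the combinatorics to be imported from $\mb{P}^2$, but one must keep track of the contribution of each $\mtc{O}(-E_i)$ under the mutations generating the constructible exceptional collection. The constraint $-1\leq\beta_i\leq 0$ is what confines the dangerous $\mtc{V}$'s to those obtainable by simple mutations from a standard full exceptional sequence containing the $\mtc{O}(-E_i)$, keeping the bookkeeping tractable and compatible with the weak DL-condition as stated.
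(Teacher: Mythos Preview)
Your reduction to the collinear surface $X_{h_0}$ via deformation is correct and matches the paper exactly: a $\mu_A$-stable sheaf on $X_{h_0}$ has $\Ext^2(\mtc{E}_0,\mtc{E}_0)=0$, hence deforms to nearby fibres and stays $\mu_A$-stable by openness.

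The serious divergence is in how you argue on $X_{h_0}$. You attempt to run a Dr\'ezet--Le Potier argument \emph{intrinsically} on $X_{h_0}$: assume the general $\mtc{E}\in\mtc{P}_F({\bf{v}})$ is not $\mu_A$-stable, analyze its Harder--Narasimhan filtration, and extract an obstructing constructible exceptional bundle $\mtc{V}$ contradicting the weak DL-condition. You yourself flag this step as the main obstacle, and it is a real gap. The hard direction of DLP on $\mb{P}^2$---that non-stability forces an extremal HN factor to be semi-exceptional---depends on the complete classification of exceptional slopes and a delicate induction. On $X_{h_0}$ you do not know that all exceptional bundles are constructible, nor do you have an analogue of the fractal exceptional-slope structure; there is no mechanism in your outline that guarantees the HN factors produce a \emph{constructible} exceptional $\mtc{V}$ rather than an arbitrary stable sheaf of smaller rank. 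Your appeal to non-speciality of $\mtc{E}$ is also not doing real work here: weak Brill--Noether controls $H^i(\mtc{E})$, not $\Ext^i(\mtc{V},\mtc{E})$ for a putative destabilizer, and in any case once $\mtc{E}$ is assumed unstable the Hom-vanishing you invoke is unavailable.

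The paper avoids all of this by pushing forward to $\mb{P}^2$. The assumption $-1\leq\beta_i\leq 0$ means a general $\mtc{E}$ is balanced on each $E_i$ with $\mtc{E}|_{E_i}\simeq\mtc{O}^{r-d_i}\oplus\mtc{O}(-1)^{d_i}$, so $\pi_*\mtc{E}$ is locally free, $H$-prioritary, and general in its own stack on $\mb{P}^2$. One then checks directly that $\pi_*\mtc{E}$ satisfies the DL-condition on $\mb{P}^2$: for an exceptional bundle $\mtc{V}$ on $\mb{P}^2$, both $\pi^*\mtc{V}$ and its elementary modification along $\bigoplus\mtc{O}_{E_i}(-1)^{r(\mtc{V})}$ are constructible exceptional on $X_{h_0}$, and $\chi(\mtc{V},\pi_*\mtc{E})=\chi(\pi^*\mtc{V},\mtc{E})$ (respectively $\chi(\pi_*\mtc{E},\mtc{V})=\chi(\mtc{E},\mtc{F})$), so the weak DL-condition on $X_{h_0}$ feeds the DL-condition on $\mb{P}^2$. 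The hypothesis $\alpha\notin\mathfrak{C}$ then invokes the DLP theorem on $\mb{P}^2$ to conclude $\pi_*\mtc{E}$ is $\mu_H$-stable. Finally, any subsheaf $\mtc{E}'\subset\mtc{E}$ has $\mu_H(\pi_*\mtc{E}')<\mu_H(\pi_*\mtc{E})$, i.e.\ $\alpha'<\alpha$, and since the denominators are bounded by $r$ this gap is bounded below; taking $\sum\varepsilon_i$ small relative to this gap gives $\mu_A(\mtc{E}')<\mu_A(\mtc{E})$.

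In short, the missing idea in your proposal is to \emph{descend} the stability question to $\mb{P}^2$ via $\pi_*$, rather than to reproduce the DLP machinery on $X_{h_0}$. This is precisely what the constraint $-1\leq\beta_i\leq 0$ and the hypothesis $\alpha\notin\mathfrak{C}$ are engineered for.
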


\subsection{The organization of the paper}

In Section 2, we recall the preliminary facts needed in the rest of the paper. In Section 3, we study the basic properties of the blow-up of $\mb{P}^2$, especially the cohomology of line bundles.

Section 4 compiles some useful properties and constructions of special prioritary sheaves. We construct a family of prioritary sheaves and characterize Chern characters on the blow-up of $\mb{P}^2$ along distinct collinear points that satisfy the weak Brill-Noether property. In Section 5, we prove the stability of constructible exceptional bundles, which is crucial in the classification of stable Chern characters. 

In sections 6, we define a sharp Bogomolov function and determine its basic properties. We primarily concentrate on the characters on the blow-up of $\mb{P}^2$ along collinear points. In this case, we study the stability of sheaves with respect to generic polarization in detail. In Section 7, using deformation theory, we generalize our results to general blow-ups of $\mb{P}^2$. 

Finally, in Section 8, we compute its ample cone of the Hilbert scheme of points on the blow-up of $\mb{P}^2$ along distinct collinear points. 
~\\

\textbf{Acknowledgements} The author is greatly indebted to his advisor, Izzet Coskun, for suggesting the
problem and many stimulating conversations. The author would also like to thank Lawrence Ein, Benjamin Gould, Shizhuo Zhang, Yeqin Liu, Sixuan Lou for helpful discussions and suggestions.

\section{Preliminaries}

\begin{convention}\textup{
By a surface, we mean a connected smooth projective algebraic surface over $\mb{C}$. All sheaves are coherent unless specified. For a surface $X$ and coherent sheaves $\mtc{E}$ and $\mtc{F}$, we write $h^i(X,\mtc{E})=\dim H^i(X,\mtc{E})$, $\hom(\mtc{E},\mtc{F})=\dim \Hom(\mtc{E},\mtc{F})$, and $\ext^i(\mtc{E},\mtc{F})=\dim \Ext^i(\mtc{E},\mtc{F})$.}
\end{convention}

\subsection{Chern characters and Riemann-Roch on surfaces}

Let $\mtc{E}$ be a torsion-free sheaf on a polarized surface $(X,A)$. Let $K(X)_{\mb{Q}}$ be the Grothendieck group of $X$ with $\mb{Q}$-coefficients. The Chern character $\che(\mtc{E})=(\che_0(\mtc{E}),\che_1(\mtc{E}),\che_2(\mtc{E}))$ is given by $$\che_0(\mtc{E})=r(\mtc{E}),\quad \che_1(\mtc{E})=c_1(\mtc{E}),\quad \che_2(\mtc{E})=\frac{c_1^2-2c_2}{2}.$$ We define the \emph{total slope} $\nu$, the \emph{$A$-slope} $\mu_A$ and the \emph{discriminant} $\Delta$ by $$\nu(\mtc{E})=\frac{c_1(\mtc{E})}{r(\mtc{E})},\quad \mu_A(\mtc{E})=\frac{c_1(\mtc{E}).A}{r(\mtc{E})},\quad \Delta(\mtc{E})=\frac{\mu(\mtc{E})^2}{2}-\frac{\che_2(\mtc{E})}{r(\mtc{E})}.$$
These quantities depend only on the Chern character of $\mtc{E}$ (and the polarization) but not on the particular sheaf. Given a Chern character ${\bf{v}}$, we define the \emph{total slope} $\nu$, the \emph{$A$-slope} $\mu_A$ and the \emph{discriminant} $\Delta$ of ${\bf{v}}$ by the same formulae.

Then we have the following Riemann-Roch formula for torsion-free sheaves $\mtc{E}$ and $\mtc{F}$: $$\chi(\mtc{E},\mtc{F})=r(\mtc{E})r(\mtc{F})(P(\nu(\mtc{F})-\nu(\mtc{E}))-\Delta(\mtc{E})-\Delta(\mtc{F})),$$ where $$P(\nu):=\chi(\mtc{O}_X)+\frac{\nu.(\nu-K_X)}{2}$$ is the Hilbert polynomial of $\mtc{O}_X$. In particular, taking $\mtc{E}=\mtc{O}_X$, this reduces to the usual Riemann-Roch formula $$\chi(\mtc{F})=r(\mtc{F})(P(\nu(\mtc{F}))-\Delta(\mtc{F})).$$

\subsection{Stability}

We now introduce the basic notions and properties about stability conditions. For more details, see \cite{HL10} or \cite{LeP97}.

The sheaf $\mtc{E}$ is called \emph{$\mu_A$-semistable} if every proper subsheaf $0\neq\mtc{F}\subseteq\mtc{E}$ of smaller rank satisfies $$\mu_A(\mtc{F})\leq\mu_A(\mtc{E}).$$ If the inequality is strict for every such $\mtc{F}$, then $\mtc{E}$ is called \emph{$\mu_A$-stable}.

Now suppose that $A$ is an integral divisor. Define the \emph{Hilbert polynomial} $P_{\mtc{E}}(m)$ and the \emph{reduced Hilbert polynomial} $p_{\mtc{E}}(m)$ of $\mtc{E}$ by $$P_{\mtc{E}}(m)=\chi(\mtc{E}(m)),\quad p_{\mtc{E}}(m)=\frac{\chi(\mtc{E}(m))}{r(\mtc{E})}.$$ Then $\mtc{E}$ is \emph{$A$-semistable} (or \emph{Gieseker semistable}) if every proper subsheaf $0\neq\mtc{F}\subseteq\mtc{E}$ of smaller rank satisfies that $$p_{\mtc{F}}(m)\leq p_{\mtc{E}}(m)$$ for all $m\gg0$. If the inequality is strict for every such $\mtc{F}$, then $\mtc{E}$ is called \emph{$A$-stable} (or \emph{Gieseker $A$-stable}). It follows immediately from the Riemann-Roch formula that $$\mu_A\textup{-stable}\Rightarrow A\textup{-stable}\Rightarrow A\textup{-semistable}\Rightarrow\mu_A\textup{-semistable}.$$ If $\mtc{E}$ is $\mu_A$-semistable for some polarization $A$, then $\Delta(\mtc{E})\geq0$ by Bogomolov inequality.

Every torsion free sheaf $\mtc{E}$ admits a \emph{Harder-Narasimhan filtration} with respect to both $\mu_A$- and $A$-stability, that is there is a finite filtration $$0=\mtc{E}_0\subset\mtc{E}_1\subset\mtc{E}_2\subset\cdots\subset\mtc{E}_n=\mtc{E}$$ such that the quotients $$\mtc{G}_i=\mtc{E}_i/\mtc{E}_{i-1},$$ called \emph{Harder-Narasimhan factors}, are $\mu_A$- (respectively $A$-) semistable and $$\mu_A(\mtc{G}_i)>\mu_A(\mtc{G}_{i+1})\quad\quad \textup{(respectively, }p_{\mtc{G}_i}(m)>p_{\mtc{G}_{i+1}}(m)\textup{ for }m\gg0\textup{)}$$ for $1\leq i\leq n-1$. Moreover, the Harder-Narasimhan filtration is unique.

\subsection{Prioritary sheaves}

In this section, we recall some results by Walter \cite{Wal98}.

\begin{defn}
Let $X$ be a surface, and $D$ be an effective divisor. A torsion free sheaf $\mtc{E}$ is called \textup{$D$-{prioritary}} if $$\Ext^2(\mtc{E},\mtc{E}(-D))=0.$$ 
\end{defn}

\begin{lemma}\textup{(Lemma 3.1 \cite{CH21})}
Let $D_1$ and $D_2$ be two effective divisors on a surface $X$ such that $D_1-D_2$ is effective. If a sheaf $\mtc{E}$ is $D_1$-prioritary, then it is also $D_2$-prioritary
\end{lemma}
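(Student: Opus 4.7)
The plan is to compare the two Ext groups by means of the effective divisor $D := D_1-D_2\geq 0$. There is a standard short exact sequence
\begin{equation*}
0\longrightarrow \mtc{O}_X(-D)\longrightarrow \mtc{O}_X\longrightarrow \mtc{O}_D\longrightarrow 0.
\end{equation*}
Tensoring with the torsion-free sheaf $\mtc{E}(-D_2)$ preserves injectivity, since the kernel of multiplication by a section cutting out $D$ is a torsion subsheaf, and we obtain
\begin{equation*}
0\longrightarrow \mtc{E}(-D_1)\longrightarrow \mtc{E}(-D_2)\longrightarrow \mtc{E}(-D_2)\otimes \mtc{O}_D\longrightarrow 0,
\end{equation*}
using $-D_2-D=-D_1$. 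The middle term is exactly the twist appearing in the $D_2$-prioritary condition, and the first term is the one appearing in the $D_1$-prioritary condition.

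Next I would apply $\Hom(\mtc{E},-)$ to this short exact sequence and read off the relevant piece of the long exact sequence:
\begin{equation*}
\Ext^2(\mtc{E},\mtc{E}(-D_1))\longrightarrow \Ext^2(\mtc{E},\mtc{E}(-D_2))\longrightarrow \Ext^2(\mtc{E},\mtc{E}(-D_2)\otimes\mtc{O}_D).
\end{equation*}
The left-hand term vanishes by hypothesis, so it suffices to show that the right-hand term vanishes.

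For that, I would use Serre duality on the surface $X$:
\begin{equation*}
\Ext^2(\mtc{E},\mtc{E}(-D_2)\otimes\mtc{O}_D)\cong \Hom(\mtc{E}(-D_2)\otimes\mtc{O}_D,\mtc{E}\otimes K_X)^{\vee}.
\end{equation*}
Now $\mtc{E}(-D_2)\otimes\mtc{O}_D$ is supported on the one-dimensional scheme $D$, hence is a torsion sheaf, while $\mtc{E}\otimes K_X$ is torsion-free because $\mtc{E}$ is torsion-free and $K_X$ is a line bundle. Any homomorphism from a torsion sheaf to a torsion-free sheaf is zero, so the Hom group vanishes, and therefore $\Ext^2(\mtc{E},\mtc{E}(-D_2))=0$.

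There is no real obstacle here; the only subtlety worth flagging is the injectivity of $\mtc{E}(-D_1)\hookrightarrow \mtc{E}(-D_2)$, which requires $\mtc{E}$ to be torsion-free (built into the definition of $D$-prioritary in the paper), and the use of Serre duality on the smooth projective surface $X$ together with the fact that $\mtc{E}\otimes K_X$ remains torsion-free.
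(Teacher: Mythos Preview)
Your proof is correct. The paper does not supply its own proof of this lemma---it is simply quoted from \cite{CH21}---so there is nothing to compare against directly. Your argument via the short exact sequence $0\to\mtc{E}(-D_1)\to\mtc{E}(-D_2)\to\mtc{E}(-D_2)\otimes\mtc{O}_D\to 0$ and Serre duality is the standard one; a slightly shorter variant is to apply Serre duality first, writing $\Ext^2(\mtc{E},\mtc{E}(-D_i))\cong\Hom(\mtc{E},\mtc{E}(K_X+D_i))^{\vee}$, and then observe that the injection $\mtc{E}(K_X+D_2)\hookrightarrow\mtc{E}(K_X+D_1)$ (again using torsion-freeness) gives $\Hom(\mtc{E},\mtc{E}(K_X+D_2))\hookrightarrow\Hom(\mtc{E},\mtc{E}(K_X+D_1))=0$. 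Both routes are essentially the same idea.
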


\begin{lemma}\textup{(Lemma 4 \cite{Wal98})}
Let $D$ be an effective Cartier divisor on a surface $X$. If $\mtc{E}$ is a $D$-prioritary torsion-free sheaf on $X$ of character ${\bf{v}}$, then the restriction map $\mtc{P}_{D}({\bf{v}})\rightarrow \Coh_D(i^{*}{\bf{v}})$, given by $\mtc{E}\mapsto\mtc{E}|_{D}$, is smooth (and therefore open) in a neighborhood of $[\mtc{E}]$, where $i:D\rightarrow X$ is the natural inclusion.

\end{lemma}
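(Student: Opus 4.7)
The plan is to invoke the standard smoothness criterion for a morphism of algebraic stacks: the induced map of first-order deformation spaces must be surjective, and the induced map of obstruction spaces must be injective and compatible with obstruction classes. For the stack $\Coh({\bf{v}})$, the tangent space at $[\mtc{E}]$ is $\Ext^1_X(\mtc{E},\mtc{E})$ and obstructions live in $\Ext^2_X(\mtc{E},\mtc{E})$; similarly, for $\Coh_D(i^*{\bf{v}})$ at $[\mtc{E}|_D]$ one uses $\Ext^1_D(\mtc{E}|_D,\mtc{E}|_D)$ and $\Ext^2_D(\mtc{E}|_D,\mtc{E}|_D)$.

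The key input is the short exact sequence
$$0 \to \mtc{E}(-D) \to \mtc{E} \to i_*(\mtc{E}|_D) \to 0,$$
obtained by tensoring the Koszul resolution of $\mtc{O}_D$ with $\mtc{E}$, using that $\mtc{E}$ is torsion-free and $D$ is Cartier so $\mathcal{T}or_1^{\mtc{O}_X}(\mtc{E},\mtc{O}_D)=0$. Applying $\Hom(\mtc{E},-)$ yields the long exact sequence
$$\cdots \to \Ext^i_X(\mtc{E},\mtc{E}(-D)) \to \Ext^i_X(\mtc{E},\mtc{E}) \to \Ext^i_X(\mtc{E}, i_*(\mtc{E}|_D)) \to \Ext^{i+1}_X(\mtc{E},\mtc{E}(-D)) \to \cdots.$$
The prioritary hypothesis $\Ext^2_X(\mtc{E},\mtc{E}(-D))=0$ then forces the restriction map $\Ext^1_X(\mtc{E},\mtc{E}) \to \Ext^1_X(\mtc{E}, i_*(\mtc{E}|_D))$ to be surjective and the map $\Ext^2_X(\mtc{E},\mtc{E}) \to \Ext^2_X(\mtc{E}, i_*(\mtc{E}|_D))$ to be injective.

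To transfer these to intrinsic invariants on $D$, I would invoke adjunction for the closed immersion $i\colon D\hookrightarrow X$: since $Li^*\mtc{E}=\mtc{E}|_D$ (by torsion-freeness and Cartier-ness again), one has $\Ext^i_X(\mtc{E}, i_*\mtc{G}) \cong \Ext^i_D(\mtc{E}|_D, \mtc{G})$ for any coherent $\mtc{G}$ on $D$. Taking $\mtc{G}=\mtc{E}|_D$ identifies the targets of the tangent and obstruction maps with $\Ext^1_D(\mtc{E}|_D,\mtc{E}|_D)$ and $\Ext^2_D(\mtc{E}|_D,\mtc{E}|_D)$, as required.

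The main subtlety, and the step I would spend most care on, is verifying that the $\Ext$-theoretic maps produced above genuinely coincide with the differential and the obstruction-functoriality of the restriction morphism $\mtc{P}_D({\bf{v}}) \to \Coh_D(i^*{\bf{v}})$. Concretely, one must identify the Atiyah-class obstruction for deforming $\mtc{E}$ across a square-zero thickening with its image under pullback to $D$, and check that the connecting map in the long exact sequence above records precisely the restriction of obstruction classes. Granting this naturality, surjectivity on tangent spaces together with injectivity on obstructions implies formal smoothness, and hence smoothness (and therefore openness) of the restriction morphism at $[\mtc{E}]$.
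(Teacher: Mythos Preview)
The paper does not supply its own proof of this lemma; it is quoted from \cite{Wal98} without argument. Your outline is essentially Walter's original proof and is correct: the exact sequence $0\to\mtc{E}(-D)\to\mtc{E}\to i_*(\mtc{E}|_D)\to 0$, the long exact sequence in $\Ext_X(\mtc{E},-)$, the vanishing $\Ext^2_X(\mtc{E},\mtc{E}(-D))=0$, and the adjunction $\Ext^i_X(\mtc{E},i_*\mtc{G})\cong\Ext^i_D(\mtc{E}|_D,\mtc{G})$ (valid because $Li^*\mtc{E}=\mtc{E}|_D$ for $\mtc{E}$ torsion-free and $D$ Cartier) are exactly the ingredients. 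You have also correctly identified the one point that requires care, namely that the cohomological maps you write down agree with the differential and obstruction map of the restriction morphism of stacks; this is a routine but necessary verification via the Atiyah class or an explicit \v{C}ech computation, and Walter handles it in the cited reference.
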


\begin{defn}
A vector bundle $\mtc{E}$ of rank $r$ on $\mb{P}^1$ is called \textup{balanced} if  $\mtc{E}\simeq\mtc{O}_{\mb{P}^1}(a)^{k}\oplus \mtc{O}_{\mb{P}^1}(a+1)^{r-k}$ for some $a\in\mb{Z}$ and $0\leq k<r$ 
\end{defn}

\begin{lemma}\textup{(Lemma 3 \cite{Wal98})}
Let $r\geq2$ and $0\leq d<r$ be integers. Let $\Coh_{\mb{P}^1}(r,-d)$ be the stack of coherent sheaves of rank $r$ and degree $-d$ on $\mb{P}^1$.
\begin{enumerate}[(i)]
    \item If $d>0$, then the sheaves not balanced form a closed substack of $\Coh_{\mb{P}^1}(r,-d)$ of codimension at least $2$.
    \item If $d=0$, then the sheaves not balanced form a closed substack of $\Coh_{\mb{P}^1}(r,0)$ of codimension $1$.
\end{enumerate}

\end{lemma}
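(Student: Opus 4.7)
The strategy is to stratify $\Coh_{\mathbb{P}^1}(r,-d)$ by splitting type via Grothendieck's theorem, compute the codimension of each stratum as an $\Ext^1$-dimension, and then carry out a short combinatorial analysis of the resulting formula. Since $\mathbb{P}^1$ is a curve, $\Ext^2$ vanishes on pairs of coherent sheaves, and Riemann--Roch gives $\chi(\mathcal{F},\mathcal{F})=r^2$ for every coherent sheaf $\mathcal{F}$ of rank $r$ and degree $-d$. Hence $\Coh_{\mathbb{P}^1}(r,-d)$ is a smooth Artin stack of pure stacky dimension $-r^2$. The substack $\Sigma_{\mathbf{a}}$ parametrizing locally free sheaves of splitting type $\mathcal{F}_{\mathbf{a}}=\bigoplus_{i=1}^{r}\mathcal{O}(a_i)$, with $a_1\leq\cdots\leq a_r$, is a single isomorphism class, so $\Sigma_{\mathbf{a}}\simeq B\Aut(\mathcal{F}_{\mathbf{a}})$ has dimension $-\hom(\mathcal{F}_{\mathbf{a}},\mathcal{F}_{\mathbf{a}})$, and
$$
\codim\Sigma_{\mathbf{a}}\;=\;\ext^1(\mathcal{F}_{\mathbf{a}},\mathcal{F}_{\mathbf{a}})\;=\;\sum_{i,j}\max(0,\,a_i-a_j-1).
$$
For the balanced splitting, $a_i-a_j\in\{-1,0,1\}$ and the codimension is $0$, recovering openness of $\Sigma_{\mathrm{bal}}$.

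For a non-balanced splitting, let $M=\max a_i$ and $m=\min a_i$. If $M-m\geq 3$, then the pair $(i,j)$ with $a_i=M$, $a_j=m$ contributes at least $2$ on its own. Otherwise $M-m=2$; let $\alpha,\beta,\gamma$ count entries equal to $m,m+1,m+2$ respectively. The only nonzero contributions come from pairs with $(a_i,a_j)=(m+2,m)$, giving $\ext^1(\mathcal{F}_{\mathbf{a}},\mathcal{F}_{\mathbf{a}})=\alpha\gamma$; being non-balanced forces $\alpha,\gamma\geq 1$. The constraint $\sum a_i=-d$ reads $rm+\beta+2\gamma=-d$, which under the degenerate choice $\alpha=\gamma=1$, $\beta=r-2$ reduces to $r(m+1)=-d$. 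For $d=0$ this is solved uniquely by $m=-1$, producing the codimension-$1$ stratum $\mathbf{a}=(-1,0,\ldots,0,1)$; for $0<d<r$, however, $r(m+1)=-d$ has no integer solution, so $\alpha\gamma\geq 2$ and every non-balanced stratum has codimension at least $2$.

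It remains to handle the locus of non-locally-free sheaves. Any coherent sheaf on $\mathbb{P}^1$ decomposes as $\mathcal{T}\oplus\mathcal{F}_{\mathrm{lf}}$ with $\mathcal{T}$ of length $\ell\geq 1$; the analogous $\Ext^1$-count, using Serre duality to compute $\ext^1(k(p),\mathcal{O}(a))=1$ and $\ext^1(k(p),k(p))=1$, shows that the contribution of $\ext^1(\mathcal{T},\mathcal{F}_{\mathrm{lf}})$ alone forces codimension at least $r\ell\geq 2$, so the torsion locus causes no trouble. The main obstacle in the proof is the combinatorial step: ruling out the otherwise-admissible possibility $\alpha=\gamma=1$, which would produce a codimension-$1$ non-balanced stratum for $d>0$, rests precisely on the Diophantine observation that $r$ does not divide $d$ in the range $0<d<r$.
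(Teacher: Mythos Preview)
The paper does not supply its own proof of this lemma; it is quoted verbatim from Walter \cite{Wal98} and used as a black box. So there is no in-paper argument to compare against.

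Your argument is correct and is essentially the standard one: stratify by Grothendieck splitting type, identify each stratum with $B\Aut(\mathcal{F}_{\mathbf{a}})$, and read off the codimension as $\ext^1(\mathcal{F}_{\mathbf{a}},\mathcal{F}_{\mathbf{a}})=\sum_{i,j}\max(0,a_i-a_j-1)$. The combinatorial reduction to the case $M-m=2$ and the divisibility obstruction $r(m+1)=-d$ ruling out $\alpha=\gamma=1$ for $0<d<r$ are exactly the right points.

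One small imprecision: in the non-locally-free case you invoke ``the analogous $\Ext^1$-count'' to bound the codimension by $r\ell$, but the torsion stratum $Z_\ell$ is not a single isomorphism class, so the identity $\codim=\ext^1(\mathcal{F},\mathcal{F})$ does not apply directly. The fix is short: since every coherent sheaf on $\mathbb{P}^1$ splits as $\mathcal{T}\oplus\mathcal{E}$, the stratum $Z_\ell$ is the image of $\Coh(0,\ell)\times\Coh^{\mathrm{lf}}(r,-d-\ell)$ under the direct-sum map, and comparing automorphism groups (the extra factor being $\Hom(\mathcal{E},\mathcal{T})\cong\mathbb{C}^{r\ell}$) gives $\dim Z_\ell=0+(-r^2)-r\ell$, hence $\codim Z_\ell=r\ell\geq r\geq 2$. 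With that adjustment your proof is complete.
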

 
\begin{corollary}
If $\mtc{E}$ is a general sheaf in $\mtc{P}_F({\bf{v}})$ with $F$ a smooth rational curve, then $\mtc{E}|_F$ is balanced along $F$.
\end{corollary}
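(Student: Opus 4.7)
The plan is to derive the corollary directly from the two Walter lemmas quoted just above: the smoothness of the restriction map for prioritary sheaves, and the codimension estimate on the non-balanced locus in $\Coh_{\mb{P}^1}(r,d)$.

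Since $F$ is a smooth rational curve, $F \cong \mb{P}^1$, so for any $\mtc{E} \in \mtc{P}_F({\bf{v}})$ the restriction $\mtc{E}|_F$ is an object of $\Coh_F(i^{*}{\bf{v}}) \cong \Coh_{\mb{P}^1}(r, d)$, where $i : F \hookrightarrow X$ is the inclusion, $r = r({\bf{v}})$, and $d$ is read off from $i^{*}{\bf{v}}$. The balanced locus $B \subset \Coh_{\mb{P}^1}(r, d)$ is open, and its complement has codimension at least one by the quoted Walter lemma on balancedness (Lemma 3 of \cite{Wal98}). Hence $B$ is open and dense in the target stack.

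Now consider the restriction morphism
$$\Phi: \mtc{P}_F({\bf{v}}) \longrightarrow \Coh_F(i^{*}{\bf{v}}), \qquad \mtc{E} \longmapsto \mtc{E}|_F.$$
By the quoted Walter lemma on $D$-prioritary sheaves (Lemma 4 of \cite{Wal98}, applied with $D = F$), $\Phi$ is smooth in a neighborhood of every point of $\mtc{P}_F({\bf{v}})$, hence smooth on the entire stack. In particular $\Phi$ is flat, open, and has equidimensional fibers of constant relative dimension. Therefore $\Phi^{-1}(B)$ is open in $\mtc{P}_F({\bf{v}})$, and it is nonempty: for any $[\mtc{E}] \in \mtc{P}_F({\bf{v}})$, openness of $\Phi$ implies that the image of $\Phi$ contains an open neighborhood of $[\mtc{E}|_F]$ in $\Coh_{\mb{P}^1}(r,d)$, which must meet the dense open $B$. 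Flatness of $\Phi$ then forces $\Phi^{-1}\!\bigl(\Coh_{\mb{P}^1}(r, d) \setminus B\bigr)$ to have codimension at least one in $\mtc{P}_F({\bf{v}})$, so $\Phi^{-1}(B)$ is dense.

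Conceptually, the proof is a formal consequence of the two Walter lemmas, so there is essentially no serious obstacle. The only minor technical point is the preservation of the codimension estimate under flat pullback when $\mtc{P}_F({\bf{v}})$ is not assumed irreducible; this is automatic because the smooth morphism $\Phi$ has locally constant relative dimension, so every closed substack of codimension $\geq 1$ in the target pulls back to a closed substack of codimension $\geq 1$ in the source component by component.
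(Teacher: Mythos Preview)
Your proof is correct and is exactly the intended argument: the paper states the corollary without proof, as an immediate consequence of the two preceding Walter lemmas, and you have simply spelled out the standard deduction (smoothness of the restriction map gives an open image in $\Coh_{\mb{P}^1}(r,d)$, which must meet the dense balanced locus, so the preimage of the balanced locus is open and dense in every component of $\mtc{P}_F({\bf{v}})$). Your remark on handling possible reducibility of $\mtc{P}_F({\bf{v}})$ is a sensible precaution, though in the applications of the paper the relevant prioritary stacks are irreducible anyway.
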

 
\begin{lemma}\textup{(Lemma 6 \cite{Wal98})}
Let $f:X'\rightarrow X$ be the blow-up of a surface $X$ at a point $x\in X$, and $E$ be the exceptional divisor in $X'$. Suppose that $\mtc{E}$ is a coherent sheaf of rank $r$ on $X'$ such that $\mtc{E}|_{E}\simeq \mtc{O}_E^{r-d}\oplus\mtc{O}_E(-1)^d$ for some $d$. Then $f_{*}\mtc{E}$ is locally free in a neighborhood of $x$, and there are exact sequences
$$0\longrightarrow f^{*}f_{*}\mtc{E}\longrightarrow \mtc{E}\longrightarrow\mtc{O}_E(-1)^{d}\longrightarrow0,$$
$$0\longrightarrow \mtc{E}(-E) \longrightarrow f^{*}f_{*}\mtc{E} \longrightarrow\mtc{O}_E^{r-d}\longrightarrow0.$$
Moreover, for any divisor $D$ on $X$, we have $\Ext^2(\mtc{E},\mtc{E}(f^{*}D)))\simeq \Ext^2(f_{*}\mtc{E},f_{*}\mtc{E}(D)))$. In particular, $\mtc{E}$ is $f^{*}D$-prioritary if and only if $f_{*}\mtc{E}$ is $D$-prioritary.

\end{lemma}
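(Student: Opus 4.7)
The plan is to handle the three conclusions in sequence: (i) local freeness of $f_*\mtc{E}$ near $x$, (ii) the two short exact sequences, and (iii) the $\Ext^2$ identification.

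For (i), I would tensor the structure sequence $0 \to \mtc{O}_{X'}(-E) \to \mtc{O}_{X'} \to \mtc{O}_E \to 0$ by $\mtc{E}$ and apply $Rf_*$. The hypothesis $\mtc{E}|_E \simeq \mtc{O}_E^{r-d} \oplus \mtc{O}_E(-1)^d$ yields $f_*(\mtc{E}|_E) = k(x)^{r-d}$ and $R^1 f_*(\mtc{E}|_E)=0$. Iterating along the filtration $\mtc{E} \supset \mtc{E}(-E) \supset \mtc{E}(-2E) \supset \cdots$, using $\mtc{O}_E(E)\simeq\mtc{O}_E(-1)$ to track how the splitting types of successive restrictions twist, and combining with cohomology-and-base-change, one concludes that $f_*\mtc{E}$ is free of rank $r$ in a neighborhood of $x$ (with $R^1 f_*\mtc{E}$ also vanishing there).

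For (ii), consider the adjunction counit $\phi: f^*f_*\mtc{E} \to \mtc{E}$, which is an isomorphism away from $E$. Restricting to $E$ gives a map $\mtc{O}_E^r \to \mtc{O}_E^{r-d}\oplus\mtc{O}_E(-1)^d$; since $\Hom_{\mb{P}^1}(\mtc{O},\mtc{O}(-1))=0$, this map factors through the first summand, and rank considerations force $\Coker(\phi) \simeq \mtc{O}_E(-1)^d$, producing the first exact sequence. The second sequence then follows by the snake lemma applied to the natural morphism from $0 \to f^*f_*\mtc{E}(-E) \to f^*f_*\mtc{E} \to (f^*f_*\mtc{E})|_E \to 0$ into $0 \to \mtc{E}(-E) \to \mtc{E} \to \mtc{E}|_E \to 0$, or equivalently by tracking the kernel of the restriction map.

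For (iii), the strategy is to establish
$$\Ext^2_{X'}(\mtc{E}, \mtc{E}(f^*D)) \simeq \Ext^2_{X'}(f^*f_*\mtc{E}, f^*f_*\mtc{E}(f^*D)) \simeq \Ext^2_X(f_*\mtc{E}, f_*\mtc{E}(D)).$$
The second isomorphism follows from $Rf_*\mtc{O}_{X'}=\mtc{O}_X$: by Grothendieck adjunction combined with the projection formula, $f^*$ induces isomorphisms on all $\Ext$ groups between coherent sheaves on $X$. The first isomorphism follows by applying $\Hom(-, \mtc{E}(f^*D))$ and $\Hom(f^*f_*\mtc{E}, -)$ to the two exact sequences from (ii), noting that the boundary contributions $\Ext^i_{X'}(\mtc{O}_E(-1)^d, \mtc{E}(f^*D))$ and $\Ext^i_{X'}(f^*f_*\mtc{E}, \mtc{O}_E(-1)^d)$ vanish for $i=1,2$. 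These vanishings are computed via Serre duality, using $K_{X'}|_E=\mtc{O}_E(-1)$ together with the explicit splitting types on $E$ to reduce everything to straightforward $H^0$ computations on $\mb{P}^1$.

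The principal obstacle I anticipate is (i): extracting local freeness of $f_*\mtc{E}$ from the single hypothesis on $\mtc{E}|_E$ requires careful cohomology-and-base-change bookkeeping and the right choice of filtration for the induction. Once that is in place, (ii) is essentially a rank computation and (iii) amounts to mechanical application of adjunction, the projection formula, and Serre duality.
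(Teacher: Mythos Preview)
The paper does not prove this lemma; it is quoted from Walter \cite{Wal98} and used as a black box, so there is no in-paper argument to compare your proposal against. Your outline is the standard proof and is essentially correct.

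One point to fix in (iii): the vanishing $\Ext^1_{X'}(\mtc{O}_E(-1)^d,\mtc{E}(f^*D))=0$ that you assert is false in general. By Serre duality (using $K_{X'}|_E\simeq\mtc{O}_E(-1)$) it is dual to $\Ext^1(\mtc{E},\mtc{O}_E(-2))\simeq H^1\bigl(\mb{P}^1,\mtc{O}(-2)^{r-d}\oplus\mtc{O}(-1)^d\bigr)\simeq k^{r-d}$. Fortunately this does not damage the argument: to pass from $\Ext^2(\mtc{E},\mtc{E}(f^*D))$ to $\Ext^2(f^*f_*\mtc{E},\mtc{E}(f^*D))$ via $\Hom(-,\mtc{E}(f^*D))$ applied to the first sequence you only need the $\Ext^2$ vanishing (which holds, as the dual is $\Hom(\mtc{E}|_E,\mtc{O}_E(-2))=0$). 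For the remaining step, the vanishings $\Ext^{1,2}(f^*f_*\mtc{E},\mtc{O}_E(-1))=0$ are correct since $(f^*f_*\mtc{E})|_E\simeq\mtc{O}_E^r$. Only the first exact sequence (and its twist by $f^*D$) is actually needed here; the second sequence plays no role in (iii). In (ii), ``rank considerations'' alone are a bit thin for identifying $\Coker(\phi)$ with $\mtc{O}_E(-1)^d$; supplement this with the observation that $f_*\Coker(\phi)=0$, which follows from $R^1f_*(f^*f_*\mtc{E})=0$, to pin down the scheme structure along $E$.
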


\begin{lemma}\textup{(Proposition 2 \cite{Wal98})}
Let $\pi:X\rightarrow C$ be a birationally ruled surface and $F\in NS(X)$ the numerical class of a fiber of $\pi$. Suppose $r\geq2$, $c_1\in NS(X)$, and $c_2\in \mb{Z}$ are given. Then the stack $\mtc{P}_{X,F}(r,c_1,c_2)$ of $F$-prioritary sheaves on $X$ of rank $r$ and Chern classes $c_1$ and $c_2$ is smooth and irreducible.
\end{lemma}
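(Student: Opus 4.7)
The plan is to establish smoothness and irreducibility separately. Smoothness is a direct local computation at each point, while irreducibility I would handle by induction on the number of blow-ups needed to reach a minimal model of the ruling.

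For smoothness at a point $[\mtc{E}] \in \mtc{P}_{X,F}({\bf{v}})$, the deformation theory of $\Coh({\bf{v}})$ tells us it is enough to prove $\Ext^2(\mtc{E}, \mtc{E}) = 0$. I would tensor the structure sequence of $F$ by $\mtc{E}$ to get the short exact sequence $0 \to \mtc{E}(-F) \to \mtc{E} \to \mtc{E}|_F \to 0$, then apply $\Hom(\mtc{E}, -)$ to obtain
$$\Ext^2(\mtc{E}, \mtc{E}(-F)) \longrightarrow \Ext^2(\mtc{E}, \mtc{E}) \longrightarrow \Ext^2(\mtc{E}, \mtc{E}|_F).$$
The left-hand term vanishes by $F$-prioritarity. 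For the right-hand term, Serre duality on $X$ identifies it with $\Hom(\mtc{E}|_F, \mtc{E} \otimes \omega_X)^{\vee}$; any map out of $\mtc{E}|_F$ has image supported on $F$, hence sits in the torsion of $\mtc{E} \otimes \omega_X$, which is zero since $\mtc{E}$ is torsion-free. So $\Ext^2(\mtc{E}, \mtc{E}) = 0$ and the stack is smooth at $[\mtc{E}]$.

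For irreducibility, I would induct on the number of blow-downs from $X$ to a minimal model. Since $X \to C$ is birationally ruled, pick a sequence of contractions of $(-1)$-curves $X = X_n \to X_{n-1} \to \cdots \to X_0$ with $X_0 \to C$ a geometrically ruled surface (a $\mb{P}^1$-bundle). For the inductive step along $f : X' \to X$, Lemma 2.9 matches $f^*F$-prioritary sheaves on $X'$ with $F$-prioritary sheaves on $X$ via pushforward: $\mtc{E}$ is recovered from $f_{*}\mtc{E}$ together with the extension data of the two sequences in that lemma. Parametrizing these data (the quotient $f^*f_{*}\mtc{E} \to \mtc{O}_E(-1)^{d}$, with $d$ fixed) realizes $\mtc{P}_{X', f^*F}$ as a Quot-scheme bundle over $\mtc{P}_{X, F}$ whose fibres are irreducible by Grothendieck's splitting on $E \cong \mb{P}^1$. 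Thus irreducibility descends from $X_0$ to $X$.

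In the base case $\pi : X_0 \to C$, I would exhibit every $F$-prioritary sheaf of character ${\bf{v}}$ (after possibly a twist) as the cokernel of a resolution
$$0 \to \pi^{*}\mtc{A} \otimes \mtc{O}_{X_0}(-aF) \stackrel{\phi}{\longrightarrow} \pi^{*}\mtc{B} \longrightarrow \mtc{E} \longrightarrow 0$$
with $\mtc{A}, \mtc{B}$ vector bundles on $C$ and $a$ large, chosen so that the Chern classes match ${\bf{v}}$ and so that the terms are $F$-prioritary line bundle sums. The parameter space of such $\phi$ is an open subset of a single $\Hom$ space, hence irreducible. Openness of the $F$-prioritary locus and of torsion-freeness of the cokernel, plus Corollary 2.8 (a general $F$-prioritary sheaf has balanced restriction to a general fiber) and the smoothness of the restriction map from Lemma 2.4, let me conclude that these cokernels fill a dense open of $\mtc{P}_{X_0, F}({\bf{v}})$. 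The main obstacle will be this base case — verifying that general $\phi$ give torsion-free prioritary cokernels and that every such prioritary sheaf arises this way; this forces one to choose $a$ large enough to kill the relevant $R^{1}\pi_{*}$ and possibly to perform an elementary transformation along a fiber to adjust the balanced type. Once the explicit construction is arranged, the inductive blow-up step and the smoothness argument are formal.
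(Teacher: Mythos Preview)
The paper does not prove this statement: it is quoted as Proposition~2 of \cite{Wal98} and given no argument. There is therefore nothing in the paper to compare your proposal against.

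That said, your outline is essentially the strategy of Walter's original proof. The smoothness argument is correct as written. For irreducibility, your inductive blow-up reduction via Lemma~2.7 (Walter's Lemma~6) is the standard move, but the fibration picture needs a small adjustment: the map $\mtc{E}\mapsto f_*\mtc{E}$ is only well-behaved on the open locus where $\mtc{E}|_E$ is balanced, so you are not realizing all of $\mtc{P}_{X',f^*F}$ as a bundle over $\mtc{P}_{X,F}$, only a dense open of it (density coming from Corollary~2.6 and smoothness). That is enough for irreducibility, but you should say it that way. In the base case on a $\mb{P}^1$-bundle, the claim that \emph{every} $F$-prioritary sheaf admits such a two-term resolution is too strong; what Walter actually shows is that such resolutions parametrize a complete family, hence dominate each component, which together with smoothness gives irreducibility. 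Your closing paragraph already flags this as the delicate point, and it is.
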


\subsection{Exceptional bundles}

In this section, we recall some known results of exceptional bundles.

A coherent sheaf $\mtc{E}$ on $X$ is called \emph{exceptional} if $\Hom(\mtc{E},\mtc{E})=\mb{C}$ and $\Ext^i(\mtc{E},\mtc{E})=0$ for any $i>0$. The Mukai's lemma (\cite{Muk87} \cite{KO95}) implies that every torsion free exceptional sheaves are locally free. Thus we call torsion free exceptional sheaves \emph{exceptional bundles}. There do exist torsion exceptional sheaves. For example, the structure sheaves $\mtc{O}_{E_i}$ of exceptional divisors on the blow-up of $\mb{P}^2$ along general points are exceptional.

Let $A=H-\sum\varepsilon_iE_i$ be a polarization on $X$, i.e. $\varepsilon_i>0$ and $\sum\varepsilon_i<1$. We say that $A$ is \emph{generic} (or $\varepsilon$ is generic) if $(\varepsilon_1,...,\varepsilon_m)$ is a generic point in the region defined by $\varepsilon_i>0$ and $\sum\varepsilon_i<1$. Sometimes, we prefer to take $A=H-\varepsilon\sum E_i$, that is, $\varepsilon_1=\cdots=\varepsilon_m$, whence we mean $\varepsilon$ is a generic number in $(0,1/m)$ by saying $A$ is generic or $\varepsilon$ is generic.

The following lemma is proved in \cite{CH21} on Hirzebruch surfaces. For the reader's convenience, we give the proof on our surface here.

\begin{lemma} \textup{(Lemma 6.7 \cite{CH21})}
Let ${\bf{v}}\in K(X)_{\mb{Q}}$ be a potentially exceptional character of rank $r$ with $c_1({\bf{v}})=aH-\sum b_iE_i$.
\begin{enumerate}[(1)]
    \item The discriminant of ${\bf{v}}$ is $\Delta=\frac{1}{2}-\frac{1}{2r^2}$.
    \item The character ${\bf{v}}$ is primitive.
    \item If $\mtc{E}$ is an $\mu_A$-stable sheaf of discriminant $\Delta(\mtc{E})<1/2$, then $\mtc{E}$ is exceptional.
    \item If $\varepsilon$ is generic and $\mtc{E}$ is a $\mu_A$-semistable sheaf of character ${\bf{v}}$, then it is $\mu_A$-stable and exceptional.
    \item If $\varepsilon$ is generic and $\mtc{E}$ is a $A$-semistable sheaf of discriminant $\Delta(\mtc{E})<\frac{1}{2}$, then it is semiexceptional.
\end{enumerate}
\end{lemma}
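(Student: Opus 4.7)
The plan is to handle the five claims in sequence, using Riemann--Roch, Serre duality, and a wall/chamber analysis on the ample cone for the last two.

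For (1), potentially exceptional means $\chi({\bf{v}},{\bf{v}})=1$. Riemann--Roch gives $\chi({\bf{v}},{\bf{v}})=r^2(P(0)-2\Delta)$, and since $X$ is a blow-up of $\mb{P}^2$ (hence rational) $P(0)=\chi(\mtc{O}_X)=1$; solving yields $\Delta=\frac{1}{2}-\frac{1}{2r^2}$. For (2), if ${\bf{v}}=n{\bf{v}}'$ for an integral character ${\bf{v}}'$, then $\chi({\bf{v}},{\bf{v}})=n^2\chi({\bf{v}}',{\bf{v}}')$, so $1=n^2\chi({\bf{v}}',{\bf{v}}')$ forces $n=1$.

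For (3), $\mu_A$-stability makes $\mtc{E}$ simple, so $\hom(\mtc{E},\mtc{E})=1$. Serre duality identifies $\Ext^2(\mtc{E},\mtc{E})$ with $\Hom(\mtc{E},\mtc{E}\otimes K_X)^{*}$. From $K_X=-3H+\sum E_i$ and $A=H-\sum\varepsilon_iE_i$ with $\sum\varepsilon_i<1$ one computes $K_X\cdot A=-3+\sum\varepsilon_i<0$, so $\mu_A(\mtc{E}\otimes K_X)<\mu_A(\mtc{E})$; stability then forces any map $\mtc{E}\to\mtc{E}\otimes K_X$ to vanish, i.e.\ $\ext^2(\mtc{E},\mtc{E})=0$. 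Riemann--Roch now reads $\chi(\mtc{E},\mtc{E})=1-\ext^1(\mtc{E},\mtc{E})=r^2(1-2\Delta)$, and the assumption $\Delta<1/2$ makes the right-hand side a positive integer at most $1$. Hence $\chi(\mtc{E},\mtc{E})=1$, $\ext^1(\mtc{E},\mtc{E})=0$, and $\mtc{E}$ is exceptional.

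For (4), combine (2) with a standard wall-chamber analysis of the ample cone. Any proper subcharacter of ${\bf{v}}$ with equal $\mu_A$-slope defines a numerical wall in the $\varepsilon$-parameters; primitivity of ${\bf{v}}$ guarantees these walls are nontrivial, and a generic $\varepsilon$ avoids all of them, so any $\mu_A$-semistable sheaf of character ${\bf{v}}$ is already $\mu_A$-stable. Together with $\Delta({\bf{v}})<1/2$ from (1), part (3) delivers exceptionality. For (5), apply the $A$-Jordan--H\"{o}lder filtration $0=\mtc{E}_0\subset\cdots\subset\mtc{E}_\ell=\mtc{E}$ whose factors $\mtc{F}_j$ are $A$-stable and share the reduced Hilbert polynomial of $\mtc{E}$. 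The wall argument from (4) upgrades each $\mtc{F}_j$ to $\mu_A$-stable; a Bogomolov-type bookkeeping on the filtration (expressing $r(\mtc{E})\Delta(\mtc{E})$ in terms of the $r(\mtc{F}_j)\Delta(\mtc{F}_j)$ and the pairwise slope differences $\nu(\mtc{F}_j)-\nu(\mtc{F}_k)$) then yields $\Delta(\mtc{F}_j)\leq\Delta(\mtc{E})<1/2$ for every $j$, whence (3) shows each $\mtc{F}_j$ is exceptional. Two exceptional bundles with the same reduced Hilbert polynomial must have the same character and be isomorphic, so $\mtc{E}$ is $S$-equivalent to a direct sum of copies of a single exceptional bundle, i.e.\ semi-exceptional.

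The technical heart of the argument is the slope/discriminant bookkeeping underlying (4) and (5): one has to be careful that primitivity of ${\bf{v}}$ together with genericity of $\varepsilon$ rules out \emph{every} destabilizing subcharacter, and that in (5) the Jordan--H\"{o}lder factors automatically inherit the discriminant bound $\Delta<\tfrac12$ required to invoke (3). Once these points are pinned down, the rest is a clean application of Riemann--Roch, Serre duality, and the exceptionality criterion of (3).
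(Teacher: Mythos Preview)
Your arguments for (1) and (3) are correct and match the paper's. Your argument for (2) is slicker than the paper's (which computes $\chi({\bf v})$ explicitly), but it proves slightly less than you later need. Primitivity of ${\bf v}$ in $K(X)$ means only that ${\bf v}$ is not an integer multiple of another integral class; it does \emph{not} in general imply $\gcd(r,a,b_1,\dots,b_m)=1$. (For instance $(r,c_1,c_2)=(2,2H,2)$ is primitive in $K(\mb{P}^2)$ while $\gcd(r,c_1)=2$.) In (4) the walls you want to avoid are trivial precisely when some $(r',c_1')$ with $0<r'<r$ satisfies $c_1'/r'=c_1/r$, i.e.\ when $\gcd(r,a,b_i)>1$; so ``primitivity guarantees the walls are nontrivial'' is exactly the missing step. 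The paper's computation of $\chi({\bf v})$ yields the stronger statement $\gcd(r,a,b_i)=1$ directly; alternatively one checks that $\chi({\bf v},{\bf v})=r^2+(r-1)c_1^2-2rc_2$ is divisible by any common divisor of $r,a,b_i$, forcing that divisor to be $1$.

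In (5) your ``Bogomolov-type bookkeeping'' does not do what you claim. The identity
\[
\Delta(\mtc{E})=\sum_j\frac{r_j}{r}\Delta(\mtc{F}_j)-\frac{1}{2r^2}\sum_{j<k}r_jr_k\,(\nu_j-\nu_k)^2
\]
together with Hodge index (the $\nu_j-\nu_k$ lie in $A^{\perp}$, so their squares are $\le 0$) gives only $\Delta(\mtc{E})\ge\sum_j\frac{r_j}{r}\Delta(\mtc{F}_j)$, a weighted-average bound; it does \emph{not} force $\Delta(\mtc{F}_j)\le\Delta(\mtc{E})$ for every $j$. The paper avoids this by arguing that for generic $\varepsilon$ the Jordan--H\"older factors, having the same reduced $A$-Hilbert polynomial, must have the same total slope $\nu$ and hence the same discriminant $\Delta=\Delta(\mtc{E})<\tfrac12$. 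They are then exceptional (by the computation in (3), which only needs simplicity and $\ext^2=0$ and so applies to $A$-stable sheaves---your detour through ``upgrade to $\mu_A$-stable'' is unnecessary), and (1) shows $\Delta$ determines the rank, so all factors share the same Chern character and are isomorphic. Your final step (the filtration splits since $\ext^1=0$) is fine.
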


\begin{proof}
\begin{enumerate}[(1)]
    \item Solving the Riemann-Roch formula $$1=\chi({\bf{v}},{\bf{v}})=r^2(1-2\Delta)$$ for $\Delta$ proves the statement.
    \item By the Riemann-Roch formula, \begin{equation}\nonumber
        \begin{split}
            \chi({\bf{v}})&=r\left(P\left(\frac{a}{r}H-\sum \frac{b_i}{r}E_i\right)-\frac{1}{2}+\frac{1}{2r^2}\right)\\
            &=\frac{1}{2r}\left((a+2r)(a+r)-\sum b_i(b_i+r)+r^2+1\right).
        \end{split}
    \end{equation}
    As $\chi({\bf{v}})$ is an integer, then $\gcd(r,a,b_1,...,b_m)=1$ and thus ${\bf{v}}$ is primitive.
    \item By the Riemann-Roch formula and stability, one has $\hom(\mtc{E},\mtc{E})=1$, $\ext^2(\mtc{E},\mtc{E})=0$, and $$\chi(\mtc{E},\mtc{E})=\frac{1}{r^2}\left(1-2\Delta\right)=1-\ext^1(\mtc{E},\mtc{E})>0.$$ Thus $\ext^1(\mtc{E},\mtc{E})=0$ and $\mtc{E}$ is exceptional. 
    \item Since $\varepsilon$ is generic and ${\bf{v}}$ is primitive, then $\mtc{E}$ has no subsheaf of smaller rank with the same $A$-slope. Hence $\mtc{E}$ is $\mu_A$-stable, and exceptional by (3). 
    \item Since $\varepsilon$ is generic, then the Jordan-H\"{o}lder factors $\gr_1,...,\gr_l$ of $\mtc{E}$ have the same total slope and discriminant. They are also exceptional bundles, by (1), so their Chern characters are primitive, hence have the same rank, and they are the same. Thus the factors are all isomorphic, and an easy induction using $\ext^1(\gr_1,\gr_1)=0$ shows that $\mtc{E}\simeq \gr_1^{\oplus l}$.
\end{enumerate}
\end{proof}

The simplest examples of exceptional bundles on blow-ups of $\mb{P}^2$ are line bundles. Now given an ordered pair of sheaves $(\mtc{E},\mtc{F})$, we form the evaluation and coevaluation maps $$\ev:\mtc{E}\otimes\Hom(\mtc{E},\mtc{F})\longrightarrow\mtc{F},\quad \coev:\mtc{E}\longrightarrow\mtc{F}\otimes\Hom(\mtc{E},\mtc{F})^{*},$$ each of which is associated to the identity element of the space $\Hom(\mtc{E},\mtc{F})\otimes \Hom(\mtc{E},\mtc{F})^{*}$. If the evaluation map is surjective, then we consider the kernel $$0\longrightarrow L_{\mtc{E}}\mtc{F}\longrightarrow\mtc{E}\otimes\Hom(\mtc{E},\mtc{F})\longrightarrow\mtc{F}\longrightarrow0;$$ if the coevaluation map is injective, then we consider the cokernel $$0\longrightarrow \mtc{E}\longrightarrow \mtc{F}\otimes\Hom(\mtc{E},\mtc{F})^{*}\longrightarrow R_{\mtc{F}}\mtc{E}\longrightarrow0.$$

\begin{defn}
The sheaf $L_{\mtc{E}}\mtc{F}$ is the \textup{left mutation} of $\mtc{F}$ across $E$, and the sheaf $R_{\mtc{F}}\mtc{E}$ is the \textup{right mutation} of $\mtc{E}$ across $\mtc{F}$.
\end{defn}

If $(\mtc{E},\mtc{F})$ is an ordered pair of exceptional bundles, then the left and right mutations are exceptional whenever they are defined. This gives us a way of producing exceptional bundles.

\begin{example}
The Euler sequence $$0\longrightarrow\mtc{O}_{\mb{P}^2}\longrightarrow\mtc{O}_{\mb{P}^2}(1)\otimes\Hom(\mtc{O}_{\mb{P}^2},\mtc{O}_{\mb{P}^2}(1))^{*}\longrightarrow T_{\mb{P}^2}\longrightarrow0$$ implies that $T_{\mb{P}^2}$ is exceptional, since it is the right mutation $R_{\mtc{O}(1)}\mtc{O}$.
\end{example}

Start with a strong exceptional collection $\sigma_0=(\mtc{E}_0,...,\mtc{E}_n)$ on a surface $X$. A \emph{transformation} of the exceptional collection $\sigma_0$ is defined as a transformation of a pair of neighboring objects in this collection. One extends $\sigma$ to an infinite periodic collection $(\mtc{E}_i)_{i\in\mb{Z}}$ by setting $\mtc{E}_{i+(n+1)k}=\mtc{E}_i\otimes(\omega_X^{*})^{\otimes k}$ for $i=0,1,...,n$. We can also do mutations in collections: if $(\mtc{E}_i,\mtc{E}_{i+1})$ has a surjective evaluation map (resp. injective coevaluation map), then one replaces $(\mtc{E}_i,\mtc{E}_{i+1})$ by $(L_{\mtc{E}_i}\mtc{E}_{i+1},\mtc{E}_i)$ (resp. $(\mtc{E}_{i+1},R_{\mtc{E}_i+1}\mtc{E}_i)$). When the operations are defined, we can iterate mutations. Write $L_j(\mtc{E}_i)_{i\in\mb{Z}}$ for the left mutation $L_{\mtc{E}_j}$.

On the surface $X$ obtained by blow up $\mb{P}^2$ along $m$ distinct points, one has a standard exceptional collection
$$\sigma_0=\left(\mtc{O}(-2H),\mtc{O}(-H),\mtc{O}(-E_1),\cdots,\mtc{O}(-E_m),\mtc{O}\right).$$

\begin{defn}
A bundle $\mtc{E}$ on $X$ is called \textup{constructible} if it can be obtained by a sequence of mutations from the standard helix $\sigma_0$.
\end{defn}

\begin{theorem}\textup{(\cite{KO95})}
All exceptional bundles and helixes on del Pezzo surfaces are constructible.
\end{theorem}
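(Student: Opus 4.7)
The plan is to proceed by induction on the Picard rank $\rho(X)$ of the del Pezzo surface, with base cases $\mb{P}^2$ (Dr\'{e}zet--Le Potier) and $\mb{P}^1\times\mb{P}^1$ (Rudakov). On $\mb{P}^2$, exceptional slopes form a specific subset of dyadic rationals reached by a finite sequence of mutations from $(\mtc{O},\mtc{O}(1),\mtc{O}(2))$; one verifies inductively that each mutation preserves exceptionality and produces the claimed Chern character. A similar combinatorial argument handles $\mb{P}^1\times\mb{P}^1$ using the two rulings.

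For the inductive step, let $\pi:X\to Y$ contract a $(-1)$-curve $E$, and let $\mtc{E}$ be an exceptional bundle on $X$. I would first use mutations across $\mtc{O}_E$ and its twists to normalize the restriction $\mtc{E}|_E$ to the balanced form $\mtc{O}_E^a\oplus\mtc{O}_E(-1)^b$. The key input is that for an exceptional bundle the groups $\Ext^i(\mtc{E},\mtc{O}_E(k))$ are forced to vanish for most $k$ by the exceptional equations combined with the normal bundle sequence of $E$; mutating past any surviving $\mtc{O}_E(k)$ preserves exceptionality, and the discreteness of Chern characters of exceptional bundles guarantees termination at a balanced restriction. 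Once $\mtc{E}|_E$ is balanced, I would invoke Walter's lemma (the lemma recalled in the excerpt): $\pi_{*}\mtc{E}$ is locally free in a neighborhood of $\pi(E)$, and the two short exact sequences there realize $\mtc{E}$ as a pair of iterated mutations involving $\pi^{*}\pi_{*}\mtc{E}$ and $\mtc{O}_E$.

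Next I would check that $\pi_{*}\mtc{E}$ is itself exceptional on $Y$. Using the projection formula together with $R^i\pi_{*}\mtc{O}_X=0$ for $i>0$, one computes $\Ext^{\bullet}_Y(\pi_{*}\mtc{E},\pi_{*}\mtc{E})$ in terms of $\Ext^{\bullet}_X(\mtc{E},\mtc{E})$, with correction terms expressed through $\mtc{E}|_E$ that vanish precisely because the restriction is balanced. By the inductive hypothesis $\pi_{*}\mtc{E}$ is constructible on $Y$; pulling back the mutation sequence that produces $\pi_{*}\mtc{E}$ from the standard helix on $Y$ and then applying the reconstruction sequences above shows $\mtc{E}$ is constructible on $X$. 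For the helix part of the statement, one extends any exceptional bundle to a full exceptional collection (using Bondal-style orthogonal-complement arguments, which work on del Pezzo surfaces because $D^b(X)$ admits a full exceptional collection) and then applies transitivity of the braid group action on full exceptional collections of fixed length.

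The main obstacle I expect is the normalization step on $E$: one must show the mutation process terminates at a balanced restriction without leaving the class of exceptional objects, and without running into numerical obstructions from Bogomolov-type inequalities. Controlling this requires a careful a priori bound on the jump $|a-b|$ in terms of the discriminant $\Delta(\mtc{E})=\tfrac{1}{2}-\tfrac{1}{2r^2}$, which is where the del Pezzo hypothesis enters decisively through positivity of $-K_X$.
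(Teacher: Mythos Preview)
The paper does not prove this statement; it is recorded as a theorem of Kuleshov--Orlov with a citation and no argument, and is used only as background. There is therefore no proof in the paper to compare your proposal against.

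On its own merits, your inductive architecture (contract a $(-1)$-curve, normalize the restriction to $E$, push forward, apply induction) is in the right spirit and is close to what Kuleshov--Orlov actually do. But two of the steps you treat as routine are in fact the content of the theorem. Most seriously, your final appeal to ``transitivity of the braid group action on full exceptional collections of fixed length'' is circular: on del Pezzo surfaces that transitivity is precisely the constructibility of helices you are asked to establish, not a general fact about triangulated categories admitting some full exceptional collection. The normalization step you flag as the main obstacle is also substantive rather than merely technical: mutating an exceptional bundle past the torsion objects $\mtc{O}_E(k)$ can produce two-term complexes rather than sheaves, and showing termination at a balanced restriction requires the stability and rigidity arguments that occupy much of Kuleshov--Orlov, not just a discriminant bound. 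Finally, the pushforward $\pi_{*}\mtc{E}$ is a priori only semi-exceptional (the present paper proves exactly this weaker statement in its section on stability of exceptional bundles); upgrading to exceptional uses the balanced restriction together with ampleness of $-K_X$, which you gesture at but do not carry out.
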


\section{Blow-ups of the projective plane}

In this section, we review some properties of blow-ups of $\mb{P}^2$. We refer the reader to \cite{Har77} and \cite{Lar04} for definitions and details of the proof.

Let $\Gamma=\{p_1,...,p_m\}$ be a set of $m$ distinct collinear points on $\mb{P}^2$. Let $\pi:X=\Bl_{\Gamma}\mb{P}^2\rightarrow \mb{P}^2$ be the blow-up of $\mb{P}^2$ along $\Gamma$ with exceptional divisor $E_1,...,E_m$. Let $H=\pi^{*}\mtc{O}(1)$ be pull-back of the line class on $\mb{P}^2$, and $L=H-\sum_{i=1}^m E_i$ be the proper transform of the line passing through the $m$ points. Then we have $$\Pic(X)=\mb{Z}H\oplus \mb{Z}E_1\oplus\cdots\oplus\mb{Z}E_m,$$ and intersection numbers  $$H^2=1,\quad E_i^2=-1,\quad L^2=1-m, \quad L.E_i=1, \quad L.H=1, \quad E_i.E_j=0$$ for any $i\neq j$. The canonical divisor of $X$ is $K_X=-3H+\sum_{i=1}^mE_i$. If $D=aH-\sum b_iE_i$, then the Riemann-Roch formula reads $$\chi(\mtc{O}_X(D))=\frac{(a+1)(a+2)}{2}-\sum_{i=1}^m\frac{b_i(b_i+1)}{2}.$$ Notice that $E_1,...,E_m,L$ are effective and that $H,H-E_1,...,H-E_m$ are nef. Since the cones generated by them are dual to each other, then we know that the nef cone of $X$ is generated by $$H,H-E_1,...,H-E_m,$$ and the effective cone is generated by $$E_1,...,E_m,H-E_1-\cdots-E_m.$$ Equivalently, a divisor $D=aH-\sum b_iE_i$ is nef if and only if $a\geq \sum b_i$ and $b_i\geq0$ for all $i$, and a divisor $D'=a'H-\sum b_i'E_i$ is effective if and only if $a'\geq0$ and $a'\geq b_i'$ for all $i$. 

\subsection{Cohomology of line bundles}

If we know the dimension of the global sections of a line bundle $\mtc{O}_X(D)$, then by Serre duality, one has $$h^2(X,\mtc{O}_X(D))=h^0(X,\mtc{O}_X(K_X-D)).$$ One can also compute $h^1(X,\mtc{O}_X(D))$ via Riemann-Roch formula: $$h^1(X,\mtc{O}_X(D))=h^0(X,\mtc{O}_X(D))+h^0(X,\mtc{O}_X(K_X-D))-\frac{D.(D-K_X)}{2}-1.$$ In this way, we know all the cohomology of a line bundle.

Now we compute the global sections of line bundles on $X$. Let $D=aH-\sum b_iE_i$ be a divisor with $a\geq \max\{b_1,0\}$. If any $b_i<0$, then chasing the exact sequence $$0\longrightarrow \mtc{O}_X(D-E_i)\longrightarrow \mtc{O}_X(D)\longrightarrow \mtc{O}_{E_i}(b_i)\longrightarrow 0,$$ one has $h^0(X,\mtc{O}_X(D))=h^0(X,\mtc{O}_X(D-E_i))$, so one can replace $D$ by $D-E_i$. Repeating this, we may assume that $b_i\geq0$ for any $i$. 

Assume first that $D$ is nef. Chasing the exact sequence $$0\longrightarrow \mtc{O}_X((n-1)H)\longrightarrow \mtc{O}_X(nH)\longrightarrow \mtc{O}_{H}(n)\longrightarrow 0,$$ and using the vanishing of $h^1(X,\mtc{O}_X(nH))$ for $n\geq0$, one obtains that $$h^0(X,\mtc{O}_X(aH))=\frac{(a+1)(a+2)}{2}.$$ Now consider the exact sequence $$0\longrightarrow \mtc{O}_X(aH-bE_1)\longrightarrow \mtc{O}_X(aH-(b-1)E_1)\longrightarrow \mtc{O}_{E_1}(b-1)\longrightarrow 0$$ for $1\leq b\leq b_1$. Since $a\geq b_1$, one always has the surjection $$H^0(X,aH-(b-1)E_1)\twoheadrightarrow H^0(E_1,\mtc{O}_{E_1}(b-1)).$$ As a consequence, we get $$h^0(X,\mtc{O}_X(aH-b_1E_1))=\frac{(a+1)(a+2)}{2}-\frac{b_1(b_1+1)}{2}.$$ Repeating this for $E_2,...,E_m$ and using the assumption that $a\geq \sum b_1$, we deduce that 
\begin{equation}\label{7}
    h^0(X,\mtc{O}_X(D))=\frac{(a+1)(a+2)}{2}-\sum_i^{m}\frac{b_i(b_i+1)}{2}.
\end{equation}

If $D$ is not nef, then the short exact sequence $$0\longrightarrow \mtc{O}_X(D-L)\longrightarrow \mtc{O}_X(D)\longrightarrow \mtc{O}_{L}\left(a-\sum b_i\right)\longrightarrow 0$$ together with $a<\sum b_i$ implies that $h^0(X,\mtc{O}_X(D))=h^0(X,\mtc{O}_X(D-L))$. Thus we can replace $D$ by $D-L=(a-1)-\sum(b_i-1)E_i$. If some coefficient $b_i$ becomes negative, one uses the previous reduction to place $b_i$ by $0$. Repeating this, we reduce the computation to the case when $D$ is nef.

Now we can prove the following useful properties.

\begin{lemma}
Let $X$ be the blow-up of $\mb{P}^2$ along $m$ collinear points, and $D=aH-\sum_{i=1}^mb_iE_i$ be a divisor on $X$. 
\begin{enumerate}[(a)]
    \item If $a>-3$, then $h^2(X,\mtc{O}_X(D))=0$. In particular, any effective divisor $D$ has $h^2(X,\mtc{O}_D)=0$.
    \item If $D$ is a nef divisor, then $\mathcal{O}_X(D)$ has no higher cohomology.
    \item Let $I$ be a subset of $\{1,2,...,m\}$ and $j$ be any index in $\{1,2,...,m\}$. If $D$ is of either of the form $$-2H+\sum_{i\in I} E_i,\quad -H+\sum_{i\in I} E_i,\quad aH-(a+1)E_j,\quad -E_j+\sum_{i\in I,i\neq j}E_i,$$ then $\mtc{O}_X(D)$ has no cohomology.
    \item If $D$ is of the form $$D=aH-\sum_{i=1}^mb_iE_i$$ with $a=\sum_{i=1}^mb_i-1$ and $b_i\geq 0$ for any $i$, then $\mtc{O}_X(D)$ has no higher cohomology.
    \item Assume that $H^i(X,\mtc{O}_X(D))=0$ for $i>0$. If $D.E_j\geq 0$ (resp. $D.H\geq -2$), then $H^i(X,\mtc{O}_X(D+E_j))=0$ (resp. $H^i(X,\mtc{O}_X(D+H))=0$) for $i>0$.
\end{enumerate}
\end{lemma}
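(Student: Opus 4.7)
My plan is to treat the five parts in order, each one either reducing to an earlier part or to facts already established in the excerpt.

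For part (a), I would apply Serre duality: $h^2(X,\mathcal{O}_X(D))=h^0(X,\mathcal{O}_X(K_X-D))$. Writing $K_X-D=(-3-a)H+\sum(1+b_i)E_i$, the $H$-coefficient equals $-3-a$, which is strictly negative as soon as $a>-3$. Since the effective cone of $X$ described above is generated by $E_1,\dots,E_m$ and $L=H-\sum E_i$, every effective divisor has non-negative $H$-coefficient, so $K_X-D$ cannot be effective and the claim follows. For the ``In particular'' clause, an effective divisor has $a\geq 0>-3$. For part (b), $h^2=0$ by (a). By the nefness hypothesis $a\geq\sum b_i$ and $b_i\geq 0$, so the explicit computation just above (formula \eqref{7}) gives $h^0(X,\mathcal{O}_X(D))=\frac{(a+1)(a+2)}{2}-\sum\frac{b_i(b_i+1)}{2}=\chi(X,\mathcal{O}_X(D))$. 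Thus $h^1=0$ as well.

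For part (c), each divisor has $H$-coefficient $\geq -2$, so $h^2=0$ by (a); and each has strictly negative $H$-coefficient or an $E_j$-coefficient violating $a'\geq b'_i$ for the effective cone description, so $h^0=0$. Thus it suffices to verify $\chi=0$, which is a one-line Riemann-Roch calculation in each of the four cases ($-2H+\sum_{I}E_i$ and $-H+\sum_{I}E_i$ give $\chi=0$ because the $H$-binomial itself vanishes; $aH-(a+1)E_j$ gives $\chi=\binom{a+2}{2}-\binom{a+2}{2}=0$; and $-E_j+\sum_{i\in I,i\neq j}E_i$ has $\chi=1-1=0$).

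The main case is part (d). Since $D.L=a-\sum b_i=-1$, the restriction sequence
\[
0\longrightarrow\mathcal{O}_X(D-L)\longrightarrow\mathcal{O}_X(D)\longrightarrow\mathcal{O}_L(-1)\longrightarrow 0
\]
has $\mathcal{O}_L(-1)$ acyclic (as $L\simeq\mathbb{P}^1$), so $H^i(D)\simeq H^i(D-L)$ for all $i$. Now $D-L=(a-1)H-\sum(b_i-1)E_i$; let $I=\{i:b_i\geq 1\}$ and $J=\{i:b_i=0\}$. For each $j\in J$ the coefficient of $-E_j$ in $D-L$ is $-1$, so $(D-L).E_j=-1$, and the same argument with $E_j$ in place of $L$ shows that subtracting $E_j$ preserves all cohomology groups. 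After removing every $j\in J$ we arrive at $D':=(a-1)H-\sum_{i\in I}(b_i-1)E_i$ with $a'=a-1=\sum b_i-2$, $b'_i=b_i-1\geq 0$, and $a'-\sum b'_i=|I|-2$. If $|I|\geq 2$, then $D'$ is nef and (b) finishes the job. The residual cases $|I|\leq 1$ reduce $D$ to $D=(b_1-1)H-b_1E_1$ (form $aH-(a+1)E_j$) or $D=-H$ (form $-H+\sum_\emptyset E_i$), both already handled in (c). The mildly tricky point is checking that the reduction is independent of the order in which the $E_j$ are subtracted; but since each subtraction kills the cohomology of an $\mathcal{O}_{E_j}(-1)$ summand, the order does not matter.

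Part (e) is a direct use of the restriction sequences. For $E_j$ we have $0\to\mathcal{O}_X(D)\to\mathcal{O}_X(D+E_j)\to\mathcal{O}_{E_j}((D+E_j).E_j)\to 0$ with $(D+E_j).E_j=D.E_j-1\geq-1$, so the quotient has no $H^1$; combined with $H^i(D)=0$ for $i>0$, the long exact sequence yields $H^i(D+E_j)=0$ for $i>0$. For $H$, the proper transform of a generic line misses the blown-up points and is $\simeq\mathbb{P}^1$, so the analogous sequence $0\to\mathcal{O}_X(D)\to\mathcal{O}_X(D+H)\to\mathcal{O}_H(D.H+1)\to 0$ finishes the argument under $D.H\geq-2$.
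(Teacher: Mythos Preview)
Your proof is correct and follows essentially the same route as the paper: Serre duality for (a), the computed $h^0$ plus Riemann--Roch for (b), Riemann--Roch with $h^0=h^2=0$ for (c), restriction to $L$ and reduction to the nef case for (d), and the standard restriction sequences for (e). One small quibble: in (c) you assert that every listed divisor has $H$-coefficient $\geq -2$, but the form $aH-(a+1)E_j$ carries no restriction on $a$; for $a\leq -3$ part~(a) does not literally apply, though $h^2=0$ still holds because $K_X-D=(-3-a)H+(a+2)E_j+\sum_{i\neq j}E_i$ fails the effectivity test $a'\geq b_j'$ (it reads $-3-a\geq -2-a$). Your treatment of (d) is in fact slightly more careful than the paper's: you explicitly dispose of the cases $|I|\leq 1$ via (c), whereas the paper's one-line claim that $D-L$ is nef once all $b_i>0$ tacitly needs at least two positive $b_i$.
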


\begin{proof}
\begin{enumerate}[(a)]
    \item This is because the coefficient of $H$ in the Serre dual of $D$ is negative.
    \item Using (\ref{7}), one has that $$h^0(X,\mtc{O}_X(D))=\frac{(a+1)(a+2)}{2}-\sum_i^{m}\frac{b_i(b_i+1)}{2}.$$ Since $h^2(X,\mtc{O}_X(D))=0$ by (a), then $h^1(X,\mtc{O}_X(D))=0$ follows from the  Riemann-Roch formula.
    \item This is also a combination of the Riemann-Roch formula, Serre duality, and the computation of global sections.
    \item Assume that $b_1\geq b_2\geq \cdots\geq b_m$. If $b_m=0$, then we can reduce to the $m-1$ case. Thus we may assume that $b_m>0$. One has $$\chi(\mtc{O}_X(D))-\chi(\mtc{O}_X(D-L))=(a+1)-\sum_{i=1}^mb_i=0,$$ and $$h^0(\mtc{O}_X(D))=h^0(\mtc{O}_X(D-L)).$$ Since $D-L$ is a nef divisor, thus we conclude by (b).
    \item If $D$ is a class such that $H^i(X,\mtc{O}_X(D))=0$ for all $i>0$ and $E_j.D\geq0$, then $H^i(X,\mtc{O}_XX(D+E_j))=0$ for all $i>0$. To see this, consider the exact sequence $$0\longrightarrow\mtc{O}_X(D)\longrightarrow\mtc{O}_X(D+E_j)\longrightarrow\mtc{O}_{\mb{P}^1}(-1)\longrightarrow0.$$ Since $\mtc{O}_X(D)$ and $\mtc{O}_{\mb{P}^1}(-1)$ have no higher  cohomology, then $\mtc{O}_X(D+E_j)$ has no higher cohomology. Similar sequences imply the other statements.
\end{enumerate}

\end{proof}

\section{Cohomology of General Sheaves in $\mtc{P}_{X,F}({\bf{v}})$}

\subsection{Elementary transformations}

In this section, we first recall the minimal discriminant property of type 2 elementary transformations, and then we give an explicit construction of an elementary transformation following the method in \cite{LZ19}.

Consider the projection from $E_i$ to a general line $H$, and let $F_i=L-E_i$ be the class of the fiber of this projection. We write $F$ to denote the class of one of the fibers $F_i$ if it does not matter which fiber we choose. For a given Chern character ${\bf{v}}=(r,c_1,\Delta)$, we can give a necessary and sufficient condition for the existence of $H$-prioritary sheaves with character $\bf{v}$. 

Write $c_1({\bf{v}})=aH-\sum b_iE_i$ with $$a=a'r+a'',\quad b_i=b_i'r+b_i'',\quad 0\leq a'',b_i''<r, \quad \forall i=1,...,m.$$ Let $D=(a'+2)H-\sum(b_i+1)E_i$ be a divisor. Notice that there exists an $F$-prioritary sheaf with Chern character ${\bf{v}}$ if and only if there exists one with ${\bf{v}'}=(r,c_1-rD,\Delta)$ since any twist of a prioritary sheaf is also prioritary.

Recall that a \emph{type $1$ elementary transformation} of $\mtc{E}$ along $\mtc{F}$ is the kernel of a surjective map $\mtc{E}\rightarrow \mtc{F}$, and a \emph{type $2$ elementary transformation} of $\mtc{E}$ along $\mtc{F}$ is an extension of $\mtc{F}$ by $\mtc{E}$.

For a clearer description of the existence of prioritary sheaves, see Figure 1 in \cite{CH21}.

\begin{prop}\textup{(Proposition 4.9 \cite{LZ19})}
Let $X$ be the blow-up of $\mb{P}^2$ along $m$ collinear points, and $\mtc{E}$ be a sheaf on $X$ of rank $r\geq 2$. Suppose that $\mtc{E}$ is a type 2 elementary transformation of $\mtc{O}_X(-2H)^{\oplus (r-a'')}\oplus\mtc{O}_X(-H)^{\oplus a''}$ along $\oplus_{i=1}^m\mtc{O}_{E_i}(-1)^{\oplus (r-b_i'')}$. Then $\mtc{E}$ is an $H$-prioritary sheaf, and for any $D\in \Pic(X)$, there are no $H$-prioritary sheaves of the same rank and total slope as $\mtc{E}\otimes\mtc{O}_X(D)$ with strictly smaller discriminant. 

\end{prop}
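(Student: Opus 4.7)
The statement breaks into two independent assertions: $H$-prioritariness of $\mtc{E}$, and minimality of $\Delta(\mtc{E})$ among $H$-prioritary sheaves of the same rank and total slope (up to twist). The first is a direct homological computation; the second is the more delicate balancing argument.

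For prioritariness, abbreviate the defining sequence as $0 \to \mtc{G} \to \mtc{E} \to \mtc{Q} \to 0$ with $\mtc{G} = \mtc{O}_X(-2H)^{\oplus(r-a'')} \oplus \mtc{O}_X(-H)^{\oplus a''}$ and $\mtc{Q} = \bigoplus_{i=1}^m \mtc{O}_{E_i}(-1)^{\oplus(r - b_i'')}$. Applying $\Hom(-,\mtc{E}(-H))$ reduces $\Ext^2(\mtc{E},\mtc{E}(-H))=0$ to the two vanishings $\Ext^2(\mtc{G},\mtc{E}(-H)) = 0$ and $\Ext^2(\mtc{Q},\mtc{E}(-H)) = 0$. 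The first decomposes as a sum of $H^2(X,\mtc{E})$ and $H^2(X,\mtc{E}(H))$, both of which I would obtain by pushing through the defining sequence: $\mtc{Q}$ has vanishing $H^2$ (it is supported on curves), and $H^2$ of each line-bundle summand of $\mtc{G}$ and $\mtc{G}(H)$ vanishes via Serre duality together with the cohomology computations of Lemma 3.1 (in all cases the Serre-dual class has strictly negative $H$-coefficient). The second vanishing, via Serre duality (using $\omega_X|_{E_i}=\mtc{O}_{E_i}(-1)$ and $H\cdot E_i = 0$), reduces to $\Hom(\mtc{E},\mtc{O}_{E_i}(-2))=0$; applying $\Hom(-,\mtc{O}_{E_i}(-2))$ to the defining sequence, the two bounding $\Hom$ terms vanish because the summands of $\mtc{G}$ restrict to $E_i$ as $\mtc{O}_{E_i}$ and the $i$-th summand of $\mtc{Q}$ is $\mtc{O}_{E_i}(-1)$, neither admitting a map to $\mtc{O}_{E_i}(-2)$.

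For minimality, after twisting by $\mtc{O}_X(-D)$ it suffices to treat $D = 0$. I would first compute $\Delta(\mtc{E})$ in closed form from the defining sequence, writing the resulting expression purely in terms of $r$, $a''$ and the $b_i''$. Let $\mtc{F}$ be any $H$-prioritary sheaf of rank $r$ and total slope $\nu(\mtc{E})$. For each fiber class $F=H-E_i$ of the ruling $X\to\mb{P}^1$ through $p_i$ we have $H-F=E_i$ effective, so by the monotonicity lemma for prioritary sheaves $\mtc{F}$ is automatically $F$-prioritary; by Walter's corollary its restriction $\mtc{F}|_F$ to a general fiber is balanced on $\mb{P}^1$. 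Likewise $\mtc{F}|_H$ is balanced along a general line. Since the degrees of these restrictions are pinned down by $\nu(\mtc{F})=\nu(\mtc{E})$, the balanced splitting types are unique, and they maximize $\chi(\mtc{F}|_F)$ and $\chi(\mtc{F}|_H)$ among restrictions of sheaves with the given numerical invariants. Converting these Euler-characteristic bounds, via Riemann-Roch on $X$, into a lower bound for $\Delta(\mtc{F})$ yields precisely $\Delta(\mtc{E})$: the elementary transformation $\mtc{E}$ is constructed so as to saturate every balancing inequality simultaneously.

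The main obstacle is the bookkeeping in the second part. One must verify that the lower bounds obtained from balancing along the line class $H$ and along each fiber class $F=H-E_i$ are compatible with the residue decompositions $a=a'r+a''$ and $b_i=b_i'r+b_i''$, and that they are jointly saturated by the concrete type-$2$ elementary transformation; this mirrors the argument of \cite[Prop.~4.9]{LZ19} in the del Pezzo setting, transposed to the collinear blow-up.
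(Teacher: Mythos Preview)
The paper does not supply its own proof of this proposition; it is quoted from \cite{LZ19}. So there is no in-text argument to compare against, and the relevant question is whether your sketch stands on its own.

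Your Part~1 (that $\mtc{E}$ is $H$-prioritary) is correct. The two vanishings you isolate are exactly what is needed, and the Serre-duality computation $\Ext^2(\mtc{O}_{E_i}(-1),\mtc{E}(-H))\simeq\Hom(\mtc{E},\mtc{O}_{E_i}(-2))^*$ goes through as you say.

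Part~2 has a genuine gap. Balancing $\mtc{F}|_H$ and $\mtc{F}|_{F_i}$ tells you nothing about $\Delta(\mtc{F})$: by Riemann--Roch on $\mb{P}^1$ one has $\chi(\mtc{F}|_C)=r+c_1(\mtc{F})\cdot C$ for any smooth rational curve $C$, and this depends only on $r$ and $c_1$, not on $\che_2$. So the sentence ``they maximize $\chi(\mtc{F}|_F)$ and $\chi(\mtc{F}|_H)$ among restrictions of sheaves with the given numerical invariants'' is vacuous, and there is no inequality to convert into a lower bound for $\Delta$. Balancing along a curve constrains the splitting type of the restriction, but every $H$-prioritary sheaf of the same $(r,c_1)$ has the same restriction invariants, regardless of $\Delta$.

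The argument that actually works uses balancing along the \emph{exceptional} divisors $E_i$ rather than the fibers $F_i$. Since $H-E_i$ is effective, any $H$-prioritary $\mtc{F}$ is $E_i$-prioritary, so a general such $\mtc{F}$ has $\mtc{F}|_{E_i}\simeq\mtc{O}_{E_i}^{r-d_i}\oplus\mtc{O}_{E_i}(-1)^{d_i}$ with $d_i$ determined by $c_1\cdot E_i$. Walter's push-forward lemma (Lemma~2.7 here) then gives $0\to\pi^*\pi_*\mtc{F}\to\mtc{F}\to\bigoplus_i\mtc{O}_{E_i}(-1)^{d_i}\to0$ with $\pi_*\mtc{F}$ an $H$-prioritary sheaf on $\mb{P}^2$. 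The discriminant of $\mtc{F}$ is now expressed in terms of $\Delta(\pi_*\mtc{F})$ and the fixed integers $d_i$, so minimality reduces to the known minimal-discriminant statement on $\mb{P}^2$, which is achieved by $\mtc{O}(-2H)^{r-a''}\oplus\mtc{O}(-H)^{a''}$. This is the reduction you should carry out; your fiber-class balancing does not substitute for it.
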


Now we construct type 2 elementary transformations of $\mtc{O}_X(-2H)^{\oplus (r-a'')}\oplus\mtc{O}_X(-H)^{\oplus a''}$ along $\oplus_{i=1}^m\mtc{O}_{E_i}(-1)^{\oplus (r-b_i'')}$. Roughly speaking, we distribute $H$ and $E_i$ evenly among the direct summands. Then we will see that such sheaves are $H$-prioritary.

Consider the $r$-tuple $$\mtc{S}=\left(\mtc{O}\left(-2H\right),...,\mtc{O}\left(-2H\right),\mtc{O}\left(-H\right),...,\mtc{O}\left(-H\right)\right)$$ where the number of $\mtc{O}(-H)$ is $a''$. 

\begin{enumerate}[1)]
    \item Start with $i=1$. Twist each coordinate by $\mtc{O}(E_1)$ starting from left to right in $\mtc{S}$ until reaching the $(r-b_1'')$-th coordinate.
    \item Let $\mtc{S}'$ be the new $r$-tuple obtained from the previous step. Reorder the coordinates of $\mtc{S}$ by decreasing $L$-slope. If two distinct line bundles $\mtc{O}(D_1)$ and $\mtc{O}(D_2)$ have the same $L$-slope, then $\mtc{O}(D_1)$ sits to the left of $\mtc{O}(D_2)$ if either
    \begin{enumerate}[(a)]
        \item $D_1.H<D_2.H$
        \item or $D_1.H=D_2.H$ and there exists a $j$ such that $D_1.E_i=D_2.E_i$ for all $i<j$ and $D_1.E_j>D_2.E_j$.
    \end{enumerate}
    \item Repeat steps 1) and 2) using $E_{i+1}$
\end{enumerate}

We call such a bundle $\mtc{E}$ a good bundle. By construction, there is a unique (up to isomorphism) good bundle $\mtc{E}$ such that $r(\mtc{E})=r$ and $c_1(\mtc{V})=c_1$. Also, notice that good bundles are type $2$ elementary transformations.

\begin{lemma}
Good bundles are $H$-prioritary.
\end{lemma}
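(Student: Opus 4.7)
The plan is to unpack the construction so that the good bundle is visibly a direct sum of line bundles, then reduce the vanishing of $\Ext^2(\mtc{E},\mtc{E}(-H))$ to a sign check on $H$-coefficients via Serre duality and the description of the effective cone from Section~3.

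First I would make explicit what the tuple $\mtc{S}$ produces. Each pass of steps 1)--3) twists some coordinates by the single divisor $E_i$; over the whole procedure a given coordinate is twisted by $E_i$ at most once. Hence after step $m$ the $j$-th entry of $\mtc{S}$ is a line bundle $\mtc{O}_X(D_j)$ with
$$D_j = c_j H + \sum_{i=1}^m \eta_i^j E_i, \qquad c_j \in \{-2,-1\}, \qquad \eta_i^j \in \{0,1\},$$
and the good bundle is $\mtc{E} = \bigoplus_{j=1}^r \mtc{O}_X(D_j)$. (The elementary-transformation viewpoint from Proposition~4.9 corresponds to filtering $\mtc{E}$ by the subsheaf $\bigoplus_j \mtc{O}_X(c_j H)$ with quotient $\bigoplus_i \mtc{O}_{E_i}(-1)^{\oplus(r-b_i'')}$, since each twist $\mtc{O}(D)\hookrightarrow \mtc{O}(D+E_i)$ has cokernel $\mtc{O}_{E_i}(-1)$; for the present lemma only the direct-sum presentation matters.)

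Next I would compute
$$\Ext^2(\mtc{E},\mtc{E}(-H)) \;=\; \bigoplus_{a,b=1}^r \Ext^2\!\bigl(\mtc{O}_X(D_a),\,\mtc{O}_X(D_b-H)\bigr) \;=\; \bigoplus_{a,b} H^2\!\bigl(X,\mtc{O}_X(D_b-D_a-H)\bigr),$$
and apply Serre duality to rewrite the $(a,b)$ summand as $H^0\bigl(X, \mtc{O}_X(K_X + H + D_a - D_b)\bigr)^{*}$. Using $K_X = -3H+\sum_i E_i$, the divisor $K_X+H+D_a-D_b = -2H + \sum_i E_i + D_a - D_b$ has coefficient of $H$ equal to $-2 + c_a - c_b$; since $c_a,c_b \in \{-2,-1\}$, this coefficient lies in $\{-3,-2,-1\}$ and is in particular strictly negative.

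Finally I would invoke the effective-cone description recalled at the start of Section~3: the effective cone of $X$ is generated by $E_1,\dots,E_m$ and $H-E_1-\cdots-E_m$, so every effective divisor has non-negative coefficient on $H$. Therefore $K_X+H+D_a-D_b$ is not effective, its $H^0$ vanishes, and so does every summand $\Ext^2(\mtc{O}_X(D_a),\mtc{O}_X(D_b-H))$; hence $\Ext^2(\mtc{E},\mtc{E}(-H))=0$. There is no real obstacle once the direct-sum presentation is isolated: the argument never uses the specific ordering, the reordering step, or the precise counts of twists by each $E_i$ -- these will matter only for later statements about slopes, balancedness on fibers, and minimality of the discriminant, not for $H$-prioritariness itself, which depends solely on the coarse shape $c_j\in\{-2,-1\}$, $\eta_i^j\in\{0,1\}$ of the summands.
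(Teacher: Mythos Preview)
Your proposal is correct and follows essentially the same approach as the paper: decompose $\mtc{E}$ into line-bundle summands and check that each $H^2(X,\mtc{O}_X(D_b-D_a-H))$ vanishes because the $H$-coefficient is too small. The paper's proof is simply more terse---it notes that the $H$-coefficient of $D_2-D_1$ is at least $-1$ and cites part (a) of the lemma on line-bundle cohomology in Section~3, whose proof is exactly the Serre-duality/effective-cone argument you have unwound.
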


\begin{proof}
Notice that for any two summands $\mtc{O}(D_1)$ and $\mtc{O}(D_2)$ in the good bundles, the coefficient of $H$ in $D_2-D_1$ is at least $-1$. Therefore we have that $$\Ext^2(\mtc{O}(D_1),\mtc{O}(D_2-H))\simeq H^2(X,\mtc{O}(D_2-D_1-H))=0,$$ so $\Ext^2(\mtc{E},\mtc{E}(-H))=0$, $\mtc{E}$ is $H$-prioritary.

\end{proof}

\subsection{Construction of a complete family of $H$-prioritary sheaves}

In this section, we will construct a complete family of $H$-prioritary, hence $F_i$-prioritary, sheaves on the blow-up of $\mb{P}^2$ along $m$ distinct collinear points, parameterized by a rational variety. This will imply that $\mtc{P}_H(\bf{v})$ and $\mtc{P}_{F_i}(\bf{v})$ are unirational. In particular, if $M_{X,A}({\bf{v}})$ is non-empty, then it is unirational.

\begin{proposition}\label{2}
Let $X$ be the blow-up of $\mb{P}^2$ along $m$ collinear points. Let ${\bf{v}}=(r,c_1,\Delta)$ be a Chern character such that $r\geq 2$ and $\Delta\geq 0$. Then the stack $\mtc{P}_{X,F}({\bf{v}})$ is non-empty, and a general sheaf $\mtc{E}$ parameterized by $\mtc{P}_{X,F}({\bf{v}})$ admits a resolution of the form $$0\longrightarrow\mtc{O}_X(-2H+D)^{\alpha}\oplus\mtc{O}_X(-H+D)^{\beta}\stackrel{s}{\longrightarrow}\bigoplus_{i=1}^m\mtc{O}_X(-E_i+D)^{\gamma_i}\oplus\mtc{O}_X(D)^{\delta}\longrightarrow \mtc{E}\longrightarrow0,$$ or $$0\longrightarrow\mtc{O}_X(-2H+D)^{\alpha}\stackrel{s}{\longrightarrow}\mtc{O}_X(-H+D)^{\beta}\oplus\bigoplus_{i=1}^m\mtc{O}_X(-E_i+D)^{\gamma_i}\oplus\mtc{O}_X(D)^{\delta}\longrightarrow \mtc{E}\longrightarrow0$$ for some divisor $D$. If the coefficient of $E_i$ in $c_1-rD$ is $b_i$, then the exponents are given by $$\alpha=-\chi({\bf{v}}(-D-H)),\quad \delta=\chi({\bf{v}}(-D)),\quad \gamma_i=b_i,\quad \beta=\left|r+\alpha-\delta-\sum \gamma_i\right|.$$ In particular, the stack $\mtc{P}_{X,F}({\bf{v}})$ is unirational.

\end{proposition}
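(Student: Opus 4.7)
The plan is to reduce to a canonical case by twisting, establish non-emptiness via the good bundle construction of the previous subsection, prove cohomological vanishings for a general sheaf using irreducibility of $\mtc{P}_{X,F}({\bf v})$ together with semicontinuity, and then build the resolution from a Beilinson-type argument adapted to the standard exceptional collection $\sigma_0=(\mtc{O}(-2H),\mtc{O}(-H),\mtc{O}(-E_1),\dots,\mtc{O}(-E_m),\mtc{O})$. First, since the $F$-prioritary property and the overall shape of the resolution are both covariant under twisting by $\mtc{O}(D)$, I would replace $\mtc{E}$ by $\mtc{E}(-D)$ and work with the normalized character ${\bf v}(-D)$. The divisor $D$ is chosen precisely so that the resulting coefficients $\gamma_i=b_i$, $\delta=\chi({\bf v}(-D))$, and $\alpha=-\chi({\bf v}(-D-H))$ are all non-negative, with $\beta$ then forced by the rank equation. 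Non-emptiness of $\mtc{P}_{X,F}({\bf v})$ follows directly from the good bundle construction of the preceding subsection, whose output is $H$-prioritary (hence $F$-prioritary). By Walter's theorem (the last lemma of Section 2.3), the stack $\mtc{P}_{X,F}({\bf v})$ is smooth and irreducible, so semicontinuity applies globally.

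Next, I would establish the cohomological vanishings needed to construct the resolution for a general $\mtc{E}\in\mtc{P}_{X,F}({\bf v})$. Concretely, the exponents $\alpha,\delta,\gamma_i$ appearing in the resolution must equal certain $\ext$-dimensions against the bundles in $\sigma_0$, and to identify them with Euler characteristics one needs the complementary $\Ext^k(\mtc{O}(\bullet),\mtc{E})$ groups to vanish generically. By semicontinuity on $\mtc{P}_{X,F}({\bf v})$, it suffices to check these vanishings on a single sheaf, for which I would choose the good bundle produced by the $r$-tuple algorithm. The required vanishings can then be verified by chasing cohomology through the short exact sequences defining the iterated elementary transformations of the good bundle, combined with the line-bundle cohomology lemma from Section 3, which pins down exactly which twists of $\mtc{O}(-2H+E_I)$ and $\mtc{O}(-H+E_I)$ are acyclic.

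With those vanishings in place, the Beilinson-type spectral sequence associated with $\sigma_0$ collapses to a two-term complex built from the line bundles appearing in the collection; counting which terms survive produces exactly the two shapes of resolution asserted, and the dichotomy between them is controlled precisely by the sign of $r+\alpha-\delta-\sum\gamma_i$, which determines whether the $\mtc{O}(-H+D)$-summand appears on the source or the target side. Unirationality then follows because the parameter space of maps $s$ with the specified source and target is an open subset of the affine space $\Hom(\text{source},\text{target})$, and quotienting by the natural $\GL_\alpha\times\GL_\beta\times\prod\GL_{\gamma_i}\times\GL_\delta$-action on the two sides preserves rationality; thus $\mtc{P}_{X,F}({\bf v})$ is dominated by a rational variety.

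The main obstacle will be verifying the cohomological vanishings on the good bundle. The good bundle is built from a delicate combinatorial recipe that distributes twists by $\mtc{O}(E_i)$ across the summands of $\mtc{O}(-2H)^{r-a''}\oplus\mtc{O}(-H)^{a''}$, and one must check that this distribution forces every summand, after twisting by each of $\mtc{O}(2H)$, $\mtc{O}(H)$, $\mtc{O}(E_j)$, and $\mtc{O}$, to land in the acyclic region identified in Section 3. A secondary technical point is showing that a general map $s$ is injective with torsion-free cokernel of character ${\bf v}$; this should follow from a standard dimension count at the degeneracy locus of $s$ once the Hom-space dimensions have been computed via the vanishings above.
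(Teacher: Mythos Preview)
Your approach differs from the paper's in a substantive way. The paper does not invoke a Beilinson spectral sequence at all; it argues in the \emph{forward} direction. After choosing $D$ so that $\alpha,\gamma_i,\delta\ge 0$ (fixing the $E_i$-coefficients of $D$ so that $0\le b_i<r$, then taking the $H$-coefficient maximal subject to $\chi({\bf v}(-D))\ge 0$ while $\chi({\bf v}(-D-H))<0$), the paper simply observes that $\mtc{H}om(\mtc{U},\mtc{V})$ is globally generated and $\rk\mtc{V}-\rk\mtc{U}=r\ge 2$, so a general $s\in\Hom(\mtc{U},\mtc{V})$ is injective with locally free cokernel $\mtc{E}_s$ of character ${\bf v}$. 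Two short lemmas then finish the argument: first, such a cokernel is $H$-prioritary (checked by applying $\Hom$-functors to the defining sequence and using the line-bundle vanishings of Section~3); second, the family $S\subset\Hom(\mtc{U},\mtc{V})$ is \emph{complete}, i.e.\ the Kodaira--Spencer map $T_sS\to\Ext^1(\mtc{E}_s,\mtc{E}_s)$ is surjective, which follows from the vanishings $\Ext^1(\mtc{U},\mtc{U})=\Ext^1(\mtc{V},\mtc{V})=\Ext^2(\mtc{V},\mtc{U})=0$. Since $\mtc{P}_{X,F}({\bf v})$ is irreducible by Walter, completeness implies $S$ dominates it, so the general sheaf admits the resolution and unirationality is immediate from $S$ being open in an affine space.

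Your backward route via Beilinson can in principle be made to work, but it is heavier and has an imprecision as written. The multiplicities in the Beilinson resolution for $\sigma_0$ are $\Ext$-groups against the \emph{dual} collection, not against the line bundles of $\sigma_0$ themselves; with $m+3$ objects in the collection, collapsing the spectral sequence to a two-term complex requires a list of vanishings you have not identified, and verifying them on the good bundle summand by summand is exactly the kind of combinatorial case analysis the paper's method avoids. Your last paragraph in fact drifts toward the paper's forward construction (injectivity of a general $s$, torsion-free cokernel); the ingredient you are missing there is precisely the Kodaira--Spencer surjectivity, which in the paper's argument replaces the entire spectral-sequence machinery.
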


\begin{proof}
We first show that we can choose $D$ such that $\alpha,\beta,\gamma,\delta$ given above are all non-negative. Write $D=cH-\sum d_iE_i$ and $c_1-rD=(a-c)H-\sum b_iE_i$. We first fix the coefficients $d_i$ by making $0\leq b_i<r$.  

Write $a=a'r+a''$ such that $0\leq a''< r$. By the Riemann-Roch formula, we have that $$\chi({\bf{v}}(-(a'+2)H+\sum d_iE_i))=r\left(\frac{a''-2r}{r}\left(\frac{a''-2r}{r}+3\right)+1-\sum_i\frac{b_i}{r}\left(\frac{b_i}{r}+1\right)-\Delta\right)<0.$$ Thus we can choose $c$ to be the largest integer such that $\chi({\bf{v}}(-D))\geq0$ but $\chi({\bf{v}}(-D-H))<0$. Setting $$\mtc{U}:=\mtc{O}_X(-2H+D)^{\alpha}\oplus\mtc{O}_X(-H+D)^{\beta},\quad \text{and} \quad \mtc{V}:=\bigoplus_{i=1}^m\mtc{O}_X(-E_i+D)^{\gamma_i}\oplus\mtc{O}_X(D)^{\delta},$$ one has that the sheaf $\mtc{H}om(\mtc{U},\mtc{V})$ is globally generated. Since $\rk\mtc{V}-\rk\mtc{U}=r\geq 2$, then a general cokernel $\mtc{E}=\mtc{E}_s$ is a vector bundle, whose Chern character is given by ${\bf{v}}(\mtc{E})={\bf{v}}$. The same argument applies to the exact sequence $$0\longrightarrow\mtc{O}_X(-2H)^{\alpha}\longrightarrow \mtc{O}_X(-H)^{\beta}\oplus\bigoplus_{i=1}^m\mtc{O}_X(-E_i)^{\gamma_i}\oplus\mtc{O}_X^{\delta}\longrightarrow \mtc{E}\longrightarrow0.$$ The rest of the statement is the contents of the next two lemmas, which are proved in \cite{LeP97} for $\mb{P}^2$ and in \cite{CH16} for blow-ups of $\mb{P}^2$ with $\chi({\bf{v}})=0$.

\end{proof}

\begin{lemma}
A general cokernel $\mtc{E}$ constructed as above is $H$-prioritary, and hence $F_i$-prioritary for all $i$. 

\end{lemma}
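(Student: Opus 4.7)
The plan is to establish $\Ext^2(\mtc{E},\mtc{E}(-H))=0$ by chasing long exact sequences from the defining resolution $0 \to \mtc{U} \to \mtc{V} \to \mtc{E} \to 0$, where $\mtc{U}$ and $\mtc{V}$ denote the left and middle terms of whichever of the two forms is at hand, and reducing everything to cohomology of line bundles controlled by Lemma 3.1. First, I apply $\Hom(-,\mtc{E}(-H))$ to the defining sequence to obtain
$$\Ext^1(\mtc{U},\mtc{E}(-H)) \longrightarrow \Ext^2(\mtc{E},\mtc{E}(-H)) \longrightarrow \Ext^2(\mtc{V},\mtc{E}(-H)),$$
so it suffices to show that both flanking groups vanish. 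To control these, I apply $\Hom(\mtc{V},-)$ and $\Hom(\mtc{U},-)$ to the twisted sequence $0 \to \mtc{U}(-H) \to \mtc{V}(-H) \to \mtc{E}(-H) \to 0$ and use that $\Ext^i$ vanishes for $i \geq 3$ on a surface. This sandwiches $\Ext^1(\mtc{U},\mtc{E}(-H))$ between $\Ext^1(\mtc{U},\mtc{V}(-H))$ and $\Ext^2(\mtc{U},\mtc{U}(-H))$, and exhibits $\Ext^2(\mtc{V},\mtc{E}(-H))$ as a quotient of $\Ext^2(\mtc{V},\mtc{V}(-H))$. Thus the problem reduces to the three vanishings $\Ext^2(\mtc{V},\mtc{V}(-H)) = 0$, $\Ext^1(\mtc{U},\mtc{V}(-H)) = 0$, and $\Ext^2(\mtc{U},\mtc{U}(-H)) = 0$.

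Second, because $\mtc{U}$ and $\mtc{V}$ are direct sums of line bundles, each of the three Ext-groups decomposes into cohomology of the difference divisors. The classes that appear are $-H$, $-2H$, $\pm E_i$, $H$, $H-E_i$, $0$, and $\pm E_i \mp E_j - H$ (together with $-2H+E_i$ and $-E_i$ in the second form of the resolution). Each of these has the required vanishing by direct appeal to Lemma 3.1: parts (b) and (c) cover the nef cases and the explicitly enumerated no-cohomology cases ($-H$, $-2H$, $-2H+E_i$, $-E_i$, and so on), while the remaining classes such as $-H+E_i-E_j$ have $h^2$ computed via Serre duality, where the Serre dual $K_X + H \pm E_i \mp E_j = -2H + \sum E_k \pm E_i \mp E_j$ has $H$-coefficient $-2$ and hence no global sections.

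Once $\Ext^2(\mtc{E},\mtc{E}(-H)) = 0$ is established, $\mtc{E}$ is $H$-prioritary by definition, and since $H - F_i = E_i$ is effective for every $i$, the monotonicity lemma (Lemma 3.1 of \cite{CH21} recalled in Section 2) immediately upgrades this to $F_i$-prioritary. The main obstacle is purely organizational: the bookkeeping of the line-bundle vanishings must be carried out for \emph{both} forms of the resolution, because in the second form the summand $\mtc{O}(-H+D)$ migrates from $\mtc{U}$ to $\mtc{V}$ and introduces new difference classes. No new ideas are needed however, since each newly appearing class still lies in the catalogue of Lemma 3.1.
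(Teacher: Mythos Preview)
Your proof is correct and follows essentially the same strategy as the paper: chase the defining resolution twice to reduce $\Ext^2(\mtc{E},\mtc{E}(-H))=0$ to cohomology vanishings of explicit line bundles, all of which are covered by Lemma~3.1. The only cosmetic difference is the order in which the functors are applied---the paper first uses $\Hom(\mtc{E},-)$ on the twisted sequence and then $\Hom(-,\mtc{O}(-E_i-H))$ on the original one, whereas you reverse the variance---but the resulting list of line-bundle vanishings is identical, and your extra condition $\Ext^2(\mtc{U},\mtc{U}(-H))=0$ is immediate from Lemma~3.1(a).
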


\begin{proof}
We may assume that $\mtc{E}$ is a vector bundle. To check that $\mtc{E}$ is $F$-prioritary, we need to show that $\Ext^2(\mtc{E},\mtc{E}(-H))=0$. Applying $\Ext(\mtc{E},\cdot)$ to the exact sequence $$0\longrightarrow \mtc{O}_X(-2H-H)^{\alpha}\oplus\mtc{O}_X(-H-H)^{\beta}{\longrightarrow}\bigoplus\mtc{O}_X(-E_i-H)^{\gamma_i}\oplus\mtc{O}_X(-H)^{\delta}\longrightarrow \mtc{E}(-H)\longrightarrow0,$$ we notice that it suffices to prove that $\Ext^2(\mtc{E},\mtc{O}_X(-E_i-H))=0$ and that $\Ext^2(\mtc{E},\mtc{O}_X(-H))=0$ for $i=1,...,m$. Now applying $\Ext(\cdot,\mtc{O}_X(-E_i-H))$ to the sequence $$0\longrightarrow \mtc{O}_X(-2H)^{\alpha}\oplus\mtc{O}_X(-H)^{\beta}{\longrightarrow}\bigoplus\mtc{O}_X(-E_i)^{\gamma_i}\oplus\mtc{O}_X^{\delta}\longrightarrow \mtc{E}\longrightarrow0,$$ one obtains $$\Ext^1(\mtc{O}_X(-2H)^{\alpha}\oplus\mtc{O}_X(-H)^{\beta},\mtc{O}_X(-E_i-H))\longrightarrow \Ext^2(\mtc{E},\mtc{O}_X(-E_i-H))$$ $$\longrightarrow \Ext^2(\bigoplus\mtc{O}_X(-E_i)^{\gamma_i}\oplus\mtc{O}_X^{\delta},\mtc{O}_X(-E_i-H)).$$ This gives $\Ext^2(\mtc{E},\mtc{O}_X(-E_i-H))=0$. Similarly, one can apply $\Ext(\cdot,\mtc{O}_X(-H))$ to obtain $$\Ext^1(\mtc{O}_X(-2H)^{\alpha}\oplus\mtc{O}_X(-H)^{\beta},\mtc{O}_X(-H))\longrightarrow \Ext^2(\mtc{E},\mtc{O}_X(-H))$$ $$\longrightarrow \Ext^2(\bigoplus\mtc{O}_X(-E_i)^{\gamma_i}\oplus\mtc{O}_X^{\delta},\mtc{O}_X(-H)),$$ yielding $\Ext^2(\mtc{E},\mtc{O}_X(-H))=0$. The same argument applies to the exact sequence $$0\longrightarrow\mtc{O}_X(-2H)^{\alpha}\longrightarrow \mtc{O}_X(-H)^{\beta}\oplus\bigoplus_{i=1}^m\mtc{O}_X(-E_i)^{\gamma_i}\oplus\mtc{O}_X^{\delta}\longrightarrow \mtc{E}\longrightarrow0.$$

\end{proof}

\begin{lemma}
Let $$\mtc{U}=\mtc{O}_X(-2H)^{\alpha}\oplus\mtc{O}_X(-H)^{\beta}\quad\textup{and}\quad \mtc{V}=\bigoplus\mtc{O}_X(-E_i)^{\gamma_i}\oplus\mtc{O}_X^{\delta}$$ or $$\mtc{U}=\mtc{O}_X(-2H)^{\alpha}\quad\textup{and}\quad \mtc{V}=\mtc{O}_X(-H)^{\beta}\oplus\bigoplus\mtc{O}_X(-E_i)^{\gamma_i}\oplus\mtc{O}_X^{\delta}$$ be as above. Then the open dense subset $S\subset\Hom\left(\mtc{U},\mtc{V}\right)$ parameterizing locally free $F$-prioritary sheaves is a complete family of $F$-prioritary sheaves.
\end{lemma}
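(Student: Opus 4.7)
The plan is to show that the classifying morphism
$$\psi\colon S\longrightarrow \mtc{P}_{X,F}({\bf{v}}),\qquad \phi\longmapsto \mathrm{coker}(\phi),$$
is smooth (and thus that the Kodaira--Spencer map is surjective at every point, which is the usual meaning of "complete family"). By the previous lemma $\psi$ is well-defined, so we only have to analyze its differential. Since $\mtc{P}_{X,F}({\bf{v}})$ is smooth and irreducible by Walter's theorem, once smoothness is known a routine dimension count will show $\psi$ is dominant, and hence its image is an open dense substack, giving completeness in the stronger sense.

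For smoothness at $[\phi]\in S$ with cokernel $\mtc{E}$, the differential factors as
$$\Hom(\mtc{U},\mtc{V})\longrightarrow \Hom(\mtc{U},\mtc{E})\longrightarrow \Ext^1(\mtc{E},\mtc{E}),$$
the first arrow coming from $\Hom(\mtc{U},-)$ applied to $0\to\mtc{U}\to\mtc{V}\to\mtc{E}\to 0$ and the second from $\Hom(-,\mtc{E})$ applied to the same sequence. The first map is surjective provided $\Ext^1(\mtc{U},\mtc{U})=0$; for both choices of $\mtc{U}$ this reduces to $H^1(X,\mtc{O})=H^1(X,\mtc{O}(H))=H^1(X,\mtc{O}(-H))=0$, each of which is immediate from Lemma 3.1. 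The second map is surjective provided $\Ext^1(\mtc{V},\mtc{E})=0$. So the whole smoothness question reduces to proving this one Ext-vanishing.

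The hard part will be the vanishing $\Ext^1(\mtc{V},\mtc{E})=0$, which unpacks as $H^1(\mtc{E}(E_i-D))=0$ for each $i$ and $H^1(\mtc{E}(-D))=0$. To verify these, twist the resolution of $\mtc{E}$ by $\mtc{O}(E_i-D)$ (resp. $\mtc{O}(-D)$) and chase the long exact sequence in cohomology. The summands of the twisted $\mtc{U}$ take the form $\mtc{O}(-2H+E_i)$, $\mtc{O}(-H+E_i)$, $\mtc{O}(-2H)$, $\mtc{O}(-H)$, and those of the twisted $\mtc{V}$ are of the form $\mtc{O}(E_i-E_j)$, $\mtc{O}(E_i)$, $\mtc{O}(-E_i)$, $\mtc{O}$; by Lemma 3.1(c) and (a) all of these have the desired vanishing of $H^1$ and $H^2$. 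The choice of $D$ in Proposition \ref{2} --- largest integer with $\chi({\bf{v}}(-D))\geq 0$ but $\chi({\bf{v}}(-D-H))<0$ --- is exactly what keeps the relevant divisors in the range where Lemma 3.1 applies. The second resolution type is handled by the same enumeration, just with $\mtc{O}(-H+D)^\beta$ moved to the $\mtc{V}$ side.

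Finally, for dominance: $\dim S=\hom(\mtc{U},\mtc{V})$ by Riemann--Roch applied to each pair of line-bundle summands, while $\dim \mtc{P}_{X,F}({\bf{v}})=-\chi({\bf{v}},{\bf{v}})=r^2(2\Delta-1)$ as an Artin stack. The generic fiber of $\psi$ is the space of surjections $\mtc{V}\twoheadrightarrow\mtc{E}$ modulo $\Aut(\mtc{U})$ and $\Aut(\mtc{V})$, and another application of the $\Hom$ sequences (now using the already-established vanishings) gives that these dimensions balance. Combined with the smoothness established above and the irreducibility of $\mtc{P}_{X,F}({\bf{v}})$, we conclude that the image of $\psi$ is open and dense, so $S$ is a complete family of $F$-prioritary sheaves.
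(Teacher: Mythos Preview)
Your core argument is correct and follows exactly the paper's route: factor the Kodaira--Spencer map as $\Hom(\mtc{U},\mtc{V})\to\Hom(\mtc{U},\mtc{E})\to\Ext^1(\mtc{E},\mtc{E})$, use $\Ext^1(\mtc{U},\mtc{U})=0$ for surjectivity of the first arrow and $\Ext^1(\mtc{V},\mtc{E})=0$ for the second, and obtain the latter from $\Ext^1(\mtc{V},\mtc{V})=0$ and $\Ext^2(\mtc{V},\mtc{U})=0$ --- your line-bundle enumeration is precisely this computation written out summand by summand.

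Two small points. First, the vanishings $\Ext^1(\mtc{U},\mtc{U})$, $\Ext^1(\mtc{V},\mtc{V})$, $\Ext^2(\mtc{V},\mtc{U})$ are invariant under simultaneous twist, so the specific choice of $D$ plays no role in this lemma; your sentence attributing the vanishing to that choice is misleading (and the $-D$'s in your displayed $H^1$'s should not be there, since the lemma is stated in the untwisted form). Second, your final paragraph on dominance and dimension counting is unnecessary: in this paper ``complete family'' means exactly that the Kodaira--Spencer map is surjective at each point of $S$, which you have already shown, and the dimension sketch you give is in any case too vague to stand on its own.
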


\begin{proof}
We only prove for the first case, and the second is the same. We need to check that the Kodaira-Spencer map $$\kappa:T_{s}S=\Hom(U,V)\longrightarrow \Ext^1(\mtc{E},\mtc{E})$$ is surjective. As the map $\kappa$ factors as the composition of two maps $$\Hom(\mtc{U},\mtc{V})\stackrel{\phi}{\longrightarrow} \Hom(\mtc{U},\mtc{E}) \stackrel{\psi}{\longrightarrow}\Ext^1(\mtc{E},\mtc{E}),$$ where $\phi$ and $\psi$ are given by applying $\Ext(\mtc{U},\cdot)$ and $\Ext(\cdot,\mtc{E})$, respectively: $$\Hom(\mtc{U},\mtc{V})\stackrel{\phi}{\longrightarrow}\Hom(\mtc{U},\mtc{E})\longrightarrow\Ext^1(\mtc{U},\mtc{U}),\quad \quad \Hom(\mtc{U},\mtc{E})\stackrel{\psi}{\longrightarrow} \Ext^1(\mtc{E},\mtc{E})\longrightarrow\Ext^1(\mtc{V},\mtc{E}).$$ Notice that we have $$\Ext^1(\mtc{U},\mtc{U})=0,\quad \Ext^1(\mtc{V},\mtc{V})=0,\quad \textup{and}\quad \Ext^2(\mtc{V},\mtc{U})=0.$$ Applying $\Ext(\mtc{V},\cdot)$ to the sequence $$0\longrightarrow \mtc{O}_X(-2H)^{\alpha}\oplus\mtc{O}_X(-H)^{\beta}\stackrel{s}{\longrightarrow}\bigoplus\mtc{O}_X(-E_i)^{\gamma_i}\oplus\mtc{O}_X^{\delta}\longrightarrow \mtc{E}\longrightarrow0,$$ one obtains that $$\Ext^1(\mtc{V},\mtc{V})\longrightarrow \Ext^1(\mtc{V},\mtc{E})\longrightarrow \Ext^2(\mtc{V},\mtc{U}).$$ Thus we conclude that $\Ext^1(\mtc{V},\mtc{E})=0$ and consequently $\kappa$ is surjective.

\end{proof}

\subsection{Brill-Noether property}

In this section, we will give a sufficient condition for the character ${\bf{v}}$ to satisfy the weak Brill-Noether property.

By semicontinuity, if $\mtc{E}$ is any sheaf with at most one cohomology, then the cohomology also vanishes for the general sheaf in any component of $\mtc{P}_H({\bf{v}})$ that contains $\mtc{E}$. If moreover, the moduli space $M_A({\bf{v}})$ is non-empty, then the general sheaf in $M_A({\bf{v}})$ has at most one non-zero cohomology.

\begin{lemma}\textup{(\cite{CH16})} 
Let $\mtc{L}$ be a line bundle on a smooth surface $X$. Let $\mtc{V}$ be a torsion-free sheaf on $X$, and let $\mtc{V}'$ be a general elementary modification of $\mtc{V}$ at a general point $p\in X$, defined as the kernel of a general surjection $\phi:\mtc{V}\rightarrow\mtc{O}_p$:
$$0\longrightarrow\mtc{V}'\longrightarrow\mtc{V}\stackrel{\phi}{\longrightarrow}\mtc{O}_p\longrightarrow0.$$
\begin{enumerate}
    \item If $\mtc{V}$ is $\mtc{L}$-prioritary, then $\mtc{V}'$ is $\mtc{L}$-prioritary.
    \item The sheaves $\mtc{V}$ and $\mtc{V}'$ have the same rank and $c_1$, and $$\chi(\mtc{V}')=\chi(\mtc{V})-1,$$ $$\Delta(\mtc{V}')=\Delta(\mtc{V})+\frac{1}{r}.$$
    \item We have $H^2(X,\mtc{V})\simeq H^2(X,\mtc{V}')$.
    \item If at least one of $H^0(X,\mtc{V})$ or $H^1(X,\mtc{V})$ is zero, then at least one of $H^0(X,\mtc{V}')$ or $H^1(X,\mtc{V}')$ is zero. In particular, if $H^2(X,\mtc{V})=0$ and $\mtc{V}$ is non-special, then $H^2(X,\mtc{V}')=0$ and $\mtc{V}'$ is also non-special.
\end{enumerate}

\end{lemma}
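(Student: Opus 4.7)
The common thread for all four parts is the long exact sequence (in cohomology, and in $\Ext$) coming from the defining short exact sequence $0\to\mtc{V}'\to\mtc{V}\to\mtc{O}_p\to0$, combined with the fact that $\mtc{O}_p$ is a skyscraper sheaf supported at a smooth point of the surface $X$. Since $\mtc{V}$ is torsion-free, it is locally free in a Zariski-open neighborhood of a general point $p$, so I may assume $\mtc{V}$ is locally free near $p$. Using the Koszul resolution of $\mtc{O}_p$ (or local-to-global Ext), one gets the standard vanishings $\Ext^i(\mtc{O}_p,\mtc{F})=0$ for $i\neq2$, and $H^i(X,\mtc{O}_p)=0$ for $i>0$. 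I will use these freely.

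For part (2), the Chern character is additive in short exact sequences, so $\che(\mtc{V}')=\che(\mtc{V})-\che(\mtc{O}_p)$. Since $\che(\mtc{O}_p)=(0,0,-1)$, the rank and $c_1$ are unchanged, $\chi$ drops by one, and the discriminant formula $\Delta=\mu^2/2-\che_2/r$ immediately gives $\Delta(\mtc{V}')=\Delta(\mtc{V})+1/r$. Part (3) follows at once from the cohomology sequence: since $H^1(\mtc{O}_p)=H^2(\mtc{O}_p)=0$, the map $H^2(\mtc{V}')\to H^2(\mtc{V})$ is an isomorphism.

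For part (1), apply $\Hom(-,\mtc{V}'(-L))$ to the defining sequence, where $L$ is a divisor for $\mtc{L}$. Since $\Ext^3(\mtc{O}_p,\mtc{V}'(-L))=0$, the map $\Ext^2(\mtc{V},\mtc{V}'(-L))\to\Ext^2(\mtc{V}',\mtc{V}'(-L))$ is surjective, so it suffices to kill $\Ext^2(\mtc{V},\mtc{V}'(-L))$. For this, apply $\Hom(\mtc{V},-\otimes\mtc{L}^{-1})$ and use $\mtc{O}_p\otimes\mtc{L}^{-1}\simeq\mtc{O}_p$ to obtain
$$\Ext^1(\mtc{V},\mtc{O}_p)\to\Ext^2(\mtc{V},\mtc{V}'(-L))\to\Ext^2(\mtc{V},\mtc{V}(-L)).$$
The right-hand term vanishes by the $\mtc{L}$-prioritary hypothesis on $\mtc{V}$, and the left-hand term vanishes because $\Ext^1(\mtc{V},\mtc{O}_p)\simeq H^1(X,\mtc{V}^\vee\otimes\mtc{O}_p)=0$ since $\mtc{V}$ is locally free at $p$ and $\mtc{O}_p$ is a skyscraper.

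For part (4), assume $H^2(\mtc{V})=0$ (hence $H^2(\mtc{V}')=0$ by (3)) and analyze
$$0\to H^0(\mtc{V}')\to H^0(\mtc{V})\xrightarrow{\psi} H^0(\mtc{O}_p)\to H^1(\mtc{V}')\to H^1(\mtc{V})\to 0.$$
If $H^0(\mtc{V})=0$, then $H^0(\mtc{V}')=0$ and we are done. If instead $H^1(\mtc{V})=0$, I must show that for general $p$ and general $\phi$ the evaluation $\psi$ is surjective, which then forces $H^1(\mtc{V}')=0$. The main (minor) obstacle is this genericity step: pick any nonzero $s\in H^0(\mtc{V})$; since $\mtc{V}$ is torsion-free its sections vanish on a proper closed subscheme, so $s(p)\neq 0$ in the fiber $\mtc{V}|_p$ for general $p$, and a general surjection $\phi:\mtc{V}|_p\twoheadrightarrow\mb{C}$ does not kill $s(p)$; hence $\psi(s)\neq0$, which forces surjectivity of $\psi$ since $H^0(\mtc{O}_p)\simeq\mb{C}$. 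This completes the proof.
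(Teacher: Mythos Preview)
The paper does not prove this lemma at all; it simply cites \cite{CH16} and moves on. So there is no ``paper's own proof'' to compare against, and your proposal stands on its own.

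Your argument is correct and is essentially the standard one. The long exact sequences in cohomology and in $\Ext$, together with the local freeness of $\mtc{V}$ near a general point and the vanishing $H^{>0}(\mtc{O}_p)=0$, handle parts (1)--(3) cleanly, and the genericity argument in part (4) is exactly what is needed. One very minor slip: the Chern character of a length-one skyscraper is $\che(\mtc{O}_p)=(0,0,1)$, not $(0,0,-1)$; your conclusion $\Delta(\mtc{V}')=\Delta(\mtc{V})+1/r$ is still correct because $\che_2(\mtc{V}')=\che_2(\mtc{V})-1$ and $\Delta=\mu^2/2-\che_2/r$. Also, in part (4) you preface the argument with ``assume $H^2(\mtc{V})=0$'', but the five-term sequence you write down is valid without that assumption (since $H^1(\mtc{O}_p)=0$ already terminates it), so the first assertion of (4) holds in full generality; the $H^2$ hypothesis is only needed for the ``In particular'' clause, which you have already covered by (3).
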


\begin{theorem}\label{3}

Let $X$ be the blow-up of $\mb{P}^2$ along $m$ collinear points. Let ${\bf{v}}=(r,\nu,\Delta)$ be a Chern character such that $r({\bf{v}})\geq 2$, and $\Delta\geq 0$. Write $\nu=aH-\sum b_iE_i$, and define $\nu':=aH-\sum_{b_i>0}b_iE_i$. If $\nu$ satisfies that $(\nu.E_i)\geq-1$ and $(\nu'.L)\geq-1$, then $\mtc{P}_{F_1,...,F_m,H}({\bf{v}})$ is non-special.

\end{theorem}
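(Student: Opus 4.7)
My plan is to argue in two stages. First I will establish the theorem at the minimal admissible discriminant $\Delta_{\min}$ by direct computation from the explicit resolution in Proposition \ref{2}. Then I will ascend to arbitrary $\Delta\geq\Delta_{\min}$ by iterating the general elementary modifications covered by the lemma of Coskun--Huizenga quoted just before the statement.

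For the base case, a general $F$-prioritary sheaf $\mtc{E}$ of character ${\bf{v}}$ fits into a two-term resolution
$$0\longrightarrow\mtc{U}\longrightarrow\mtc{V}\longrightarrow\mtc{E}\longrightarrow 0,$$
with $\mtc{U}$ and $\mtc{V}$ built from line bundles in $\{\mtc{O}_X(-2H+D),\ \mtc{O}_X(-H+D),\ \mtc{O}_X(-E_i+D),\ \mtc{O}_X(D)\}$, where $D=cH-\sum d_iE_i$ is the twist chosen in the proof of Proposition \ref{2} (so $\chi({\bf{v}}(-D))\geq 0>\chi({\bf{v}}(-D-H))$ and the effective $E_i$-coefficients lie in $[0,r)$). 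The slope hypotheses translate cleanly into cohomological vanishings via Lemma 3.1: the bound $(\nu.E_i)\geq -1$ pins each $\mtc{O}_X(-E_i+D)$ in the vanishing regime of parts (b)--(d), while $(\nu'.L)\geq -1$ keeps $\mtc{O}_X(-H+D)$ and $\mtc{O}_X(-2H+D)$ away from negative intersection with the proper transform $L$ of the line through the $m$ points, preventing $H^1$ and $H^2$ from appearing on those summands. Once one has $H^i(\mtc{U})=H^i(\mtc{V})=0$ for $i\geq 1$, the long exact sequence collapses to
$$0\longrightarrow H^0(\mtc{U})\longrightarrow H^0(\mtc{V})\longrightarrow H^0(\mtc{E})\longrightarrow 0,\qquad H^1(\mtc{E})=H^2(\mtc{E})=0,$$
because the generic map $s$ of Proposition \ref{2} induces a map of maximal rank on global sections, and $\chi({\bf{v}}(-D))\geq 0$ forces $\dim H^0(\mtc{V})\geq\dim H^0(\mtc{U})$. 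This gives a non-special $\mtc{E}$ with only $H^0$ possibly nonzero.

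For the ascent, write $\Delta=\Delta_{\min}+k/r$ with $k\in\mb{Z}_{\geq 0}$ and apply $k$ successive general elementary modifications at general points to the base-case sheaf. By Lemma 4.5, each modification preserves the $H$-prioritary property (and hence the $F_i$-prioritary property, as $H-F_i=E_i$ is effective), preserves $H^2=0$, and preserves non-speciality, while fixing $\nu$ and incrementing $\Delta$ by $1/r$. After $k$ iterations one obtains a non-special element of $\mtc{P}_{F_1,\ldots,F_m,H}({\bf{v}})$, which by semicontinuity transmits non-speciality to the whole irreducible component of the stack containing it.

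The main obstacle is the bookkeeping in the base case: one must handle both shapes of resolution in Proposition \ref{2} (selected by the sign of $r+\alpha-\delta-\sum\gamma_i$) and separate the indices with $b_i>0$ (contributing summands $\mtc{O}_X(-E_i+D)$ to $\mtc{V}$ and appearing in $\nu'$) from those with $b_i<0$ (absorbed into the twist $D$ by the reduction of Proposition \ref{2}). In each subcase one must verify that the slope bounds $(\nu.E_i)\geq -1$ and $(\nu'.L)\geq -1$ are exactly the numerical thresholds that keep every line bundle in the resolution inside the vanishing regions of Lemma 3.1(b)--(e); this is the delicate step, since it is where the collinearity of the $m$ points is genuinely used, via the presence of $L$ as an effective curve of negative self-intersection. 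Once these vanishings are in place, the cohomological argument proceeds uniformly across cases.
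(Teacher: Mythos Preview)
Your strategy diverges from the paper's and the base case has a genuine gap. The paper does \emph{not} appeal to the resolution of Proposition~\ref{2}. Instead, when $\nu=\nu'$ (all $b_i\geq 0$) it exhibits a \emph{good bundle} $\mtc{E}=\bigoplus\mtc{O}_X(D_j)$ with the prescribed rank and total slope, and simply checks via the line-bundle computations of Section~3 that each $D_j$ has no higher cohomology; the passage to arbitrary $\Delta$ is then handled (implicitly, via the lemma you cite) by elementary modifications. When $\nu\neq\nu'$ the paper does something qualitatively different: it contracts an exceptional divisor $E_m$ with $b_m<0$ via $\pi:X_m\to X_{m-1}$, observes that a general $\mtc{E}$ is balanced on $E_m$ so that $R^i\pi_*\mtc{E}=0$ and $H^*(\mtc{E})=H^*(\pi_*\mtc{E})$, and shows the induced map $\pi_*:\mtc{P}_F({\bf{v}})\dashrightarrow\mtc{P}_F(\pi_*{\bf{v}})$ is dominant, reducing inductively to the $\nu=\nu'$ case on a smaller blow-up.

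Your base case breaks because the line bundles in the Proposition~\ref{2} resolution need not have vanishing higher cohomology. The twist $D=cH-\sum d_iE_i$ has its $H$-coefficient $c$ determined by $\Delta$, not by $\nu$ alone, and nothing forces $c\geq 0$; once $c-2\leq -3$ the bundle $\mtc{O}_X(-2H+D)$ acquires $H^2$. You try to dodge this by working at ``$\Delta_{\min}$'', but that quantity is never pinned down: Proposition~\ref{2} is formulated for $\Delta\geq 0$, whereas the minimal prioritary discriminant is that of the good bundle and may be negative, and at $\Delta=0$ you give no argument that the chosen $c$ lands in the range you need. The hypotheses $(\nu.E_i)\geq -1$ and $(\nu'.L)\geq -1$ constrain $\nu$, while the relevant line bundles depend on $D$; the translation you promise (``pins each $\mtc{O}_X(-E_i+D)$ in the vanishing regime'', ``keeps $\mtc{O}_X(-2H+D)$ away from negative intersection with $L$'') is never carried out and is not automatic. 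Finally, indices with $b_i<0$ are not ``absorbed into the twist'': for $b_i\in[-1,0)$ one has $d_i=-1$ and $\gamma_i=r(b_i+1)>0$, so $\mtc{O}_X(-E_i+D)$ still appears---which is exactly why the paper resorts to the pushforward reduction rather than attempting a direct argument on $X_m$.
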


\begin{proof}
If $\nu=\nu'$, then we can find good bundles $\mtc{E}=\oplus\mtc{O}(D_i)$ such that $r(\mtc{E})=r$ and $\nu(\mtc{E})=\nu$. Then $\mtc{E}$ has no higher cohomology by our computation of cohomology of line bundles. Thus we can find a non-special sheaf in $\mtc{P}_{H}(\bf{v})$. It then follows that a general sheaf parameterized by $\mtc{P}_{F_1,...,F_m,H}({\bf{v}})$ is non-special.

If $\nu\neq\nu'$, we may assume that $-1\leq (\nu.E_m)=-d_m/r<0$. Consider the map $\pi:X_m\rightarrow X_{m-1}$ contracting $E$. Let $\mtc{E}$ be a general sheaf in $\mtc{P}_{H,F_1,...,F_m}(v)$. Then in particular $\mtc{E}$ is locally free and balanced along $E$. Thus $\mtc{E}|_E\simeq \mtc{O}_E^{a}\oplus \mtc{O}_E(-1)^{r-a}$ for some $a$. Taking the push-forward of the resolution $$0\longrightarrow\mtc{O}_X(-2H+D)^{\alpha}\stackrel{s}{\longrightarrow}\mtc{O}_X(-H+D)^{\beta}\oplus\bigoplus_{i=1}^m\mtc{O}_X(-E_i+D)^{\gamma_i}\oplus\mtc{O}_X(D)^{\delta}\longrightarrow \mtc{E}\longrightarrow0$$ of $\mtc{E}$, one gets $$0\longrightarrow\mtc{O}_{X_{m-1}}(-2H+D')^{\alpha}\stackrel{s}{\longrightarrow}\mtc{O}_{X_{m-1}}(-H+D')^{\beta}\oplus\bigoplus_{i=1}^{m-1}\mtc{O}_{X_{m-1}}(-E_i+D')^{\gamma_i}\oplus\mtc{O}_{X_{m-1}}(D')^{\delta}$$$$\longrightarrow \pi_{*}\mtc{E}\longrightarrow R^1\pi_{*}\mtc{O}_X(-2H+D)^{\alpha}=0$$ because $(-2H+D)|_E\simeq \mtc{O}_E(-1)$, where $D'=\pi_{*}D$. In particular, the higher direct image $R^if_{*}\mtc{E}$ vanish for all $i>0$ so that the cohomology of $\mtc{E}$ is the cohomology of $\pi_{*}\mtc{E}$. Moreover, $\pi_{*}\mtc{E}$ is locally free and prioritary with respect to $H,F_1,...,F_{m-1}$ and admits a desired resolution. Notice that the rational map $\pi_{*}:\mtc{P}_{F}({\bf{v}})\dashrightarrow  \mtc{P}_{F}(\pi_{*}({\bf{v}}))$ defined by $\mtc{E}\mapsto \pi_{*}\mtc{E}$ is dominant: the tangent map is $$\Ext^1(\mtc{E},\mtc{E})\longrightarrow\Ext^1(\pi_{*}\mtc{E},\pi_{*}\mtc{E})\simeq \Ext^1(\pi^{*}\pi_{*}\mtc{E},\mtc{E}),$$ whose cokernel is $$\Ext^2(\mtc{O}_{E_m}(-1)^{d_m},\mtc{E})\simeq H^0(\mtc{E}^{*}\otimes K_X|_{E_m}(-1))=H^0(X,\mtc{O}_{E_m}(-1)^{r-d}\oplus\mtc{O}_{E_m}(-2)^{d})=0.$$ Thus we reduce to the case when $\nu=\nu'$.

\end{proof}

\begin{corollary}
Let $X$ be the blow-up of $\mb{P}^2$ along $m$ distinct collinear points, and ${\bf{v}}=(r,c_1,\Delta)$ be a Chern character on $X$ such that $r({\bf{v}})\geq 2$, $\Delta\geq 0$, and $c_1$ is nef. Then  $\mtc{P}_{F_1,...,F_m,H}({\bf{v}})$ is non-special.
\end{corollary}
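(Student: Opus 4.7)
The plan is to reduce this corollary directly to the preceding Theorem \ref{3} by verifying its two numerical hypotheses on the slope $\nu = c_1/r$. Writing $c_1 = \alpha H - \sum \beta_i E_i$, the nef characterization recalled at the end of Section 3 says that $c_1$ is nef if and only if $\beta_i \geq 0$ for every $i$ and $\alpha \geq \sum \beta_i$. Setting $a = \alpha/r$ and $b_i = \beta_i/r$, we then have $b_i \geq 0$ for all $i$, so in the notation of Theorem \ref{3} the divisor $\nu'$ coincides with $\nu$ (the summands with $b_i = 0$ contribute nothing to either expression).

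Next I would compute the two required intersection numbers using the intersection table $H^2 = 1$, $E_i^2 = -1$, $E_i.E_j = 0$ for $i \neq j$, $H.E_i = 0$, $H.L = 1$, $L.E_i = 1$. A direct calculation gives
$$\nu . E_i = -b_i (E_i.E_i) = b_i \geq 0 \geq -1,$$
and
$$\nu' . L = \nu . L = a - \sum_i b_i = \tfrac{1}{r}\bigl(\alpha - \sum_i \beta_i\bigr) \geq 0 \geq -1.$$
Thus both hypotheses of Theorem \ref{3} are satisfied by $\nu$.

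Applying Theorem \ref{3} then yields that a general sheaf parameterized by $\mtc{P}_{F_1,\ldots,F_m,H}({\bf{v}})$ has at most one non-vanishing cohomology group, i.e. $\mtc{P}_{F_1,\ldots,F_m,H}({\bf{v}})$ is non-special. There is no genuine obstacle here; the only thing to be mindful of is the bookkeeping between the slope coefficients $(a,b_i)$ used in Theorem \ref{3} and the Chern class coefficients $(\alpha,\beta_i)$ coming from the nef hypothesis, and the sign convention in $\nu.E_i$ arising from $E_i^2 = -1$.
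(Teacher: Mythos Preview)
Your proof is correct and is exactly the intended argument: the paper states this as an immediate corollary of Theorem \ref{3} without a separate proof, and you have supplied precisely the verification that the nef condition $\beta_i\geq 0$, $\alpha\geq\sum\beta_i$ forces $\nu'=\nu$ and both inequalities $(\nu.E_i)\geq -1$, $(\nu'.L)\geq -1$ to hold. The only cosmetic remark is that the nef characterization is stated at the beginning of Section~3 rather than the end.
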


\begin{remark}
\textup{On a smooth del Pezzo surface, we expect that ${\bf{v}}$ is non-special for ${\bf{v}}=(r,c_1,\Delta)$ such that $\Delta\geq 0$ and that $(\nu.C)\geq -1$ for any negative curve $C$. On our surface $X=X_m$, this already fails for line bundles. Consider for example $m=5$ and the line bundle $D=2H+E_1-E_2-...-E_5$, which satisfies that $(D.E_i)\geq -1$ and $(D.L)\geq -1$. We have $h^0(D)=h^0(H)=3$ and $\chi(D)=6-4=2$, so that $h^1(D)=1$.}
\end{remark}

\section{Stability of exceptional bundles}

In this section, we prove the stability of the constructible exceptional bundles. Although it is unknown whether all exceptional bundles on the blow-up of $\mb{P}^2$ along $m$ collinear points are constructible, we will see in the next two sections that the constructible ones are sufficient to give us a description of stable characters.

\begin{proposition}
Let $X$ be the blow-up of $\mb{P}^2$ along $m$ distinct points (not necessarily collinear). If $\mtc{E}$ is an exceptional bundle which is balanced on $E_i$ for any $i$, then $\pi_{*}\mtc{E}$ is semi-exceptional.
\end{proposition}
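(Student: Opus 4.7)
The plan is to iteratively push $\mtc{E}$ forward by blowing down one exceptional divisor at a time via Walter's lemma (Lemma 6 of \cite{Wal98}), and then to apply the Dr\'ezet--Le Potier classification on $\mb{P}^2$: a $\mu_H$-semistable torsion-free sheaf on $\mb{P}^2$ with $\Delta<1/2$ is semi-exceptional (this is Lemma 6.7(5) specialized to $\mb{P}^2$). Factor $\pi=f_1\circ\cdots\circ f_m$ as a sequence of single-point blowups $f_j\colon X_j\to X_{j-1}$, with $X_m=X$ and $X_0=\mb{P}^2$, and proceed by induction on the number of blowdowns.

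At each step, apply Walter's lemma to the last contraction $f=f_m$ with exceptional divisor $E=E_m$. After a normalizing twist of $\mtc{E}$ by a multiple of $E$ bringing the balancing into the form $\mtc{E}|_E\simeq\mtc{O}_E^{r-d}\oplus\mtc{O}_E(-1)^d$, the lemma provides the two exact sequences
\begin{equation*}
0\to f^*f_*\mtc{E}\to\mtc{E}\to\mtc{O}_E(-1)^d\to 0,\qquad 0\to\mtc{E}(-E)\to f^*f_*\mtc{E}\to\mtc{O}_E^{r-d}\to 0,
\end{equation*}
together with the isomorphism $\Ext^2(f_*\mtc{E},f_*\mtc{E})\simeq\Ext^2(\mtc{E},\mtc{E})=0$, the latter vanishing by exceptionality. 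The balanced hypothesis on the remaining divisors $E_1,\ldots,E_{m-1}$ propagates because each $E_j$ is disjoint from $E$ and is mapped isomorphically by $f$, so $f_*\mtc{E}|_{E_j}\simeq\mtc{E}|_{E_j}$ remains balanced. A direct Chern character computation using $\che(\mtc{O}_E(-1))=(0,E,-1/2)$ in the first sequence yields
\begin{equation*}
\Delta(f_*\mtc{E})=\Delta(\mtc{E})-\frac{d(r-d)}{2r^2}\leq\Delta(\mtc{E}).
\end{equation*}
Iterating all $m$ blowdowns, $\pi_*\mtc{E}$ is locally free on $\mb{P}^2$, satisfies $\Ext^2(\pi_*\mtc{E},\pi_*\mtc{E})=0$, and has discriminant bounded by $\Delta(\pi_*\mtc{E})\leq\Delta(\mtc{E})=1/2-1/(2r^2)<1/2$.

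The final step is $\mu_H$-semistability of $\pi_*\mtc{E}$ on $\mb{P}^2$; once this is in hand, Dr\'ezet--Le Potier concludes the semi-exceptionality. Given a putative maximal $\mu_H$-destabilizing subsheaf $\mtc{F}\subset\pi_*\mtc{E}$, adjunction produces a nonzero morphism $\pi^*\mtc{F}\to\mtc{E}$ with $\mu_H(\pi^*\mtc{F})=\mu_H(\mtc{F})>\mu_H(\pi_*\mtc{E})=\mu_H(\mtc{E})$ on $X$. The main obstacle is to turn this into a genuine contradiction with the exceptionality of $\mtc{E}$: since $H$ is only nef (not ample) on $X$, a general exceptional bundle need not be $\mu_H$-stable, and the image of $\pi^*\mtc{F}\to\mtc{E}$ may have smaller rank than $\pi^*\mtc{F}$. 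My approach would be to combine this destabilizing map with an HN-filtration analysis of $\mtc{E}$ with respect to a generic perturbation $A=H-\sum\varepsilon_i E_i$ with small $\varepsilon_i$, producing a nonzero class in $\Ext^1(\mtc{E},\mtc{E})$ or $\Ext^2(\mtc{E},\mtc{E})$ by Serre duality (using $K_X=-3H+\sum E_i$), contradicting exceptionality.
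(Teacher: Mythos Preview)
Your proposal has a genuine gap at the step you yourself flag as the ``main obstacle'': the $\mu_H$-semistability of $\pi_*\mtc{E}$ on $\mb{P}^2$. The sketch you offer---perturbing to $A=H-\sum\varepsilon_iE_i$, running an HN-filtration argument, and deriving a nonzero $\Ext^1$ or $\Ext^2$ class on $\mtc{E}$---does not close. First, it is circular in the context of the paper: the $\mu_A$-stability of exceptional bundles on $X$ (Theorem \ref{5}) is proved \emph{using} the present proposition, so you cannot invoke any stability of $\mtc{E}$ on $X$ here. Second, even granting a destabilizing $\mtc{F}\subset\pi_*\mtc{E}$, the adjoint map $\pi^*\mtc{F}\to\mtc{E}$ only gives a subsheaf of $\mtc{E}$ with large $H$-slope; there is no mechanism in your outline that converts this into a nonzero $\Ext^1(\mtc{E},\mtc{E})$ or $\Ext^2(\mtc{E},\mtc{E})$ class. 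A rigid bundle can certainly have subsheaves of large slope with respect to a non-ample class. So as written, the proof is incomplete precisely at the key point.

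The paper bypasses semistability entirely and proves the vanishing of $\Ext^{>0}(\pi_*\mtc{E},\pi_*\mtc{E})$ directly. After the normalizing twist, it uses the single exact sequence
\[
0\longrightarrow \pi^*\pi_*\mtc{E}\longrightarrow \mtc{E}\longrightarrow \bigoplus_{i=1}^m\mtc{O}_{E_i}(-1)^{d_i}\longrightarrow 0
\]
(after checking $R^{>0}\pi_*\mtc{E}=0$ via the theorem on formal functions), applies $\Hom(-,\mtc{E})$, and uses the projection formula together with $R^{>0}\pi_*\mtc{E}=0$ to identify $\Ext^i(\pi^*\pi_*\mtc{E},\mtc{E})\simeq\Ext^i(\pi_*\mtc{E},\pi_*\mtc{E})$. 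The balanced hypothesis gives $\Ext^2(\mtc{O}_{E_i}(-1),\mtc{E})\simeq\Hom(\mtc{E},\mtc{O}_{E_i}(-2))^{*}=0$, and then exceptionality of $\mtc{E}$ forces $\Ext^1(\pi_*\mtc{E},\pi_*\mtc{E})=\Ext^2(\pi_*\mtc{E},\pi_*\mtc{E})=0$ from the long exact sequence. No discriminant estimate, no semistability, and no Dr\'ezet--Le Potier input are needed. Your iterated-blowdown discriminant computation is correct but ultimately unnecessary; the missing ingredient in your route is exactly what the paper's one-line $\Ext$ computation supplies.
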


\begin{proof}
Let $\mtc{E}$ be an exceptional bundle balanced on every $E_i$. By twisting with some line bundle, we may assume that $\mtc{E}|_{E_i}=\mtc{O}_{E_i}^{r-d_i}\oplus\mtc{O}_{E_i}(-1)^{d_i}$. Let $\pi:X\rightarrow \mb{P}^2$ be the blow-down map. We first show that $R^i\pi_{*}\mtc{E}=0$ for any $i>0$. For each exceptional divisor $E=E_i$, by the theorem on formal functions, the cohomology vanishes if and only if $\varinjlim H^1(E_n,\mtc{E}|_{E_n})=0$, where $E_n$ is the closed subscheme of $X$ defined by $\mtc{O}(-nE)$. For each $n\geq 1$, there are exact sequences on $X$
$$0\longrightarrow\mtc{O}_E(n)\longrightarrow\mtc{O}_{E_{n+1}}\longrightarrow\mtc{O}_{E_n}\longrightarrow0$$
Tensoring with $\mtc{E}$ and induction on $n$ gives the result.

Now consider the short exact sequence $$0\rightarrow\pi^{*}\pi_{*}\mtc{E}\rightarrow\mtc{E}\rightarrow\bigoplus_{i=1}^m\mtc{O}_{E_i}(-1)^{\oplus d_i}\longrightarrow0.$$ Applying the functor $\Hom(\cdot,\mtc{E})$, one gets the long exact sequence $$\Ext^1(\mtc{E},\mtc{E})\rightarrow\Ext^1(\pi^{*}\pi_{*}\mtc{E},\mtc{E})\rightarrow\bigoplus_{i=1}^m\Ext^2(\mtc{O}_{E_i}(-1)^{d_i},\mtc{E})\rightarrow\Ext^2(\mtc{E},\mtc{E})\rightarrow\Ext^2(\pi^{*}\pi_{*}\mtc{E},\mtc{E})\rightarrow0.$$ We have that $$\Ext^i(\pi^{*}\pi_{*}\mtc{E},\mtc{E})=H^i(X,\pi^{*}(\pi_{*}\mtc{E})^{\vee}\otimes\mtc{E})=H^i(\mb{P}^2,\pi_{*}(\pi^{*}(\pi_{*}\mtc{E})^{\vee}\otimes\mtc{E}))=\Ext^i(\pi_{*}\mtc{E},\pi_{*}\mtc{E}),$$ and that $$\Ext^2(\mtc{O}_{E_i}(-1),\mtc{E})\simeq \Hom(\mtc{E},\mtc{O}_{E_i}(-2))^{*}=0$$ as $\mtc{E}_{E_i}\simeq\mtc{O}_{E_i}^{r-d_i}\oplus\mtc{O}_{E_i}(-1)^{d_i}$. Since $\mtc{E}$ is exceptional, then $\pi_{*}\mtc{E}$ is semi-exceptional by the long exact sequence. 

\end{proof}

\begin{theorem}\label{5}
Let $X$ be the blow-up of $\mb{P}^2$ along $m$ distinct collinear points, and $A=H-\sum \varepsilon_iE_i$ with $0<\varepsilon_i\ll 1$ a generic polarization. If $\mtc{E}$ is a constructible exceptional bundle, then it is $\mu_A$-stable.
\end{theorem}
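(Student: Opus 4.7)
The strategy is to reduce $\mu_A$-stability on $X$ to the known $\mu_H$-stability of exceptional bundles on $\mb{P}^2$ (Dr\'ezet--Le Potier), exploiting that for $\varepsilon_i$ small and generic, $\mu_A$ is a small generic perturbation of $\mu_H$. The key conduit is the preceding proposition: if $\mtc{E}$ is balanced on each $E_i$, then $\pi_{*}\mtc{E}$ is semi-exceptional on $\mb{P}^2$. So the first task is to show that every constructible exceptional bundle is balanced on each $E_i$. I would proceed by induction on the number of mutations from the standard helix $\sigma_0$: the base case is trivial since line bundles restrict to line bundles on each $E_i$, and for the inductive step, $(-)|_{E_i}$ is exact, so a mutation triangle on $X$ restricts to an exact sequence on $E_i\cong\mb{P}^1$ whose outer terms are balanced; one then checks, using the explicit form of the evaluation/coevaluation map on $\mb{P}^1$ (where every sheaf splits as a sum of line bundles), that the middle term remains balanced. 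This yields $\pi_{*}\mtc{E}\cong\mtc{F}^{\oplus k}$ for some exceptional bundle $\mtc{F}$ on $\mb{P}^2$, and $\mtc{F}$ is $\mu_H$-stable by Dr\'ezet--Le Potier.

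To establish $\mu_A$-semistability, let $\mtc{G}\subset\mtc{E}$ be a proper saturated subsheaf. Since $\mtc{G}$ is torsion-free and $\pi$ is birational, $\pi_{*}\mtc{G}$ has the same rank as $\mtc{G}$ and sits inside $\mtc{F}^{\oplus k}$. Using $H=\pi^{*}\mtc{O}_{\mb{P}^2}(1)$ and the projection formula, $c_1(\mtc{G}).H=c_1(\pi_{*}\mtc{G}).H$, so $\mu_H(\mtc{G})=\mu_H(\pi_{*}\mtc{G})\leq\mu_H(\mtc{F})=\mu_H(\mtc{E})$ by the $\mu_H$-semistability of $\mtc{F}^{\oplus k}$. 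Writing
\[
\mu_A(\mtc{H}')=\mu_H(\mtc{H}')-\sum_i\varepsilon_i\,\frac{c_1(\mtc{H}').E_i}{r(\mtc{H}')},
\]
strict inequality in $\mu_H$ transfers to strict inequality in $\mu_A$ for all sufficiently small $\varepsilon$, while the borderline case $\mu_H(\mtc{G})=\mu_H(\mtc{E})$ combined with $\mu_A(\mtc{G})>\mu_A(\mtc{E})$ would impose a linear condition $\sum_i\varepsilon_i\alpha_i>0$ with $\alpha_i\in\mb{Q}$ determined by $\mtc{G}$ and $\mtc{E}$; since only finitely many such $\alpha$'s arise from possible destabilizing characters (the relevant ranks and first Chern classes being a priori bounded by those of $\mtc{E}$), generic $\varepsilon$ avoids every such hyperplane. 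Hence $\mtc{E}$ is $\mu_A$-semistable, and applying part (4) of Lemma 2.9 to the potentially exceptional character $\mathrm{ch}(\mtc{E})$ upgrades this to $\mu_A$-stability.

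The main obstacle is the balance claim in the inductive step. Restricting a mutation triangle to $E_i$ gives an exact sequence of $\mb{P}^1$-bundles whose middle term is an evaluation map $\mtc{E}|_{E_i}\otimes\Hom_X(\mtc{E},\mtc{F})\to\mtc{F}|_{E_i}$, where the Hom is computed \emph{on} $X$; one must verify that, even though the number of copies of $\mtc{E}|_{E_i}$ is global data, the resulting kernel on $\mb{P}^1$ keeps its line-bundle summands within one degree of each other. A subsidiary obstacle is producing the uniform boundedness required for the genericity argument in the semistability step; this can be handled via the Bogomolov inequality applied to $\mtc{G}$ and $\mtc{E}/\mtc{G}$, bounding the discriminants and hence the first Chern classes of admissible saturated subsheaves of $\mtc{E}$.
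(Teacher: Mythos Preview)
Your approach matches the paper's: verify that a constructible exceptional bundle $\mtc{E}$ is balanced on each $E_i$, apply the preceding proposition so that $\pi_{*}\mtc{E}$ is (semi-)exceptional on $\mb{P}^2$, invoke Dr\'ezet--Le Potier to get $\mu_H$-(semi)stability there, transfer this to $\mu_A$-semistability on $X$ for generic small $\varepsilon_i$, and then upgrade via Lemma~2.9(4). The paper simply asserts the balance claim for constructible bundles without spelling out the inductive argument you sketch.

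The one substantive difference is how the two arguments pass from $\mb{P}^2$ back to $X$. The paper asserts that $\pi_{*}\mtc{E}$ is an \emph{exceptional} bundle on $\mb{P}^2$---not merely semi-exceptional---and hence $\mu_H$-\emph{stable}. Under that assertion, any proper subsheaf $\mtc{G}\subset\mtc{E}$ of smaller rank satisfies $\mu_H(\pi_{*}\mtc{G})<\mu_H(\pi_{*}\mtc{E})$ strictly, so $\alpha'<\alpha$, and there is no borderline case at all. You instead work with $\pi_{*}\mtc{E}\cong\mtc{F}^{\oplus k}$, which is only $\mu_H$-semistable, and must then treat the case $\mu_H(\mtc{G})=\mu_H(\mtc{E})$ separately.

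Your treatment of that borderline case has a genuine gap. When $\mu_H(\mtc{G})=\mu_H(\mtc{E})$, the destabilizing condition $\mu_A(\mtc{G})>\mu_A(\mtc{E})$ is an \emph{open half-space} condition $\sum_i\varepsilon_i\alpha_i>0$ on $\varepsilon$, not a hyperplane. Choosing $\varepsilon$ generic only ensures $\sum_i\varepsilon_i\alpha_i\neq0$; it does not force the sign you need. Even with only finitely many such vectors $\alpha$ in play, a generic small $\varepsilon$ can lie in the ``wrong'' half-space for some of them, so the argument as written does not establish $\mu_A$-semistability. To close the gap you should argue, as the paper does, that in fact $k=1$ (i.e.\ $\pi_{*}\mtc{E}$ is exceptional, not just semi-exceptional), which removes the borderline case entirely; failing that, you would need a structural reason why no saturated subsheaf with $\alpha'=\alpha$ can satisfy $\sum_i\varepsilon_i(\beta_i-\beta_i')>0$ for small $\varepsilon$, and genericity alone does not supply one.
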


\begin{proof}
Notice first that constructible exceptional bundles are balanced on every $E_i$. It suffices to show that $\mtc{E}$ is $\mu_A$-semistable. Suppose otherwise, one has a destabilizing subsheaf $\mtc{F}\subseteq \mtc{E}$. Write $\nu(\mtc{E})=\alpha H-\sum \beta_iE_i$ and $\nu(\mtc{F})=\alpha' H-\sum \beta_i'E_i$, then we have $$\alpha-\sum \varepsilon_i\beta_i<\alpha'-\sum \varepsilon_i\beta_i'.$$ It follows that $\alpha\leq \alpha'$ by taking $\varepsilon_i$ sufficiently small. However, since $\pi_{*}\mtc{E}$ is an exceptional bundle on $\mb{P}^2$, in particular $\mu_H$-stable, then we get a contradiction.
\end{proof}

\begin{remark}\textup{
For a general exceptional bundle $\mtc{E}$, consider the exact sequence} $$0\longrightarrow\mtc{E}(-H)\longrightarrow\mtc{E}\longrightarrow\mtc{E}|_H\longrightarrow0.$$ \textup{As $\ext^1(\mtc{E},\mtc{E})=\ext^2(\mtc{E},\mtc{E})=0$, then applying $\Ext(\mtc{E},\cdot)$ to the sequence, one gets} $$\Ext^2(\mtc{E},\mtc{E}(-H))\simeq \Ext^1(\mtc{E},\mtc{E}|_H).$$ \textup{Notice that there exists a line $l_0\in |H|$ such that $\mtc{E}|_{l_0}$ is balanced if and only if $\mtc{E}$ is $H$-prioritary:} $$\Ext^1(\mtc{E},\mtc{E}|_H)\simeq \Ext^1(\mtc{E}|_H,\mtc{E}|_H)=H^1(l_0,(\mtc{E}\otimes\mtc{E})|_{l_0}).$$

\end{remark}

\begin{example}
\textup{On our polarized surface $(X,A)$ with $\pi:X\rightarrow \mb{P}^2$ the blow-up of $\mb{P}^2$ along $m$ collinear points and $A=H-\varepsilon\sum E_i$ an ample class, consider the bundle $\mtc{E}:=\pi^{*}\mtc{T}_{\mb{P}^2}$. Notice that} $$\Ext^{*}(\mtc{E},\mtc{E})=\Ext^{*}(\mtc{O},\pi^{*}(\mtc{T}_{\mb{P}^2}\otimes\Omega^1_{\mb{P}^2}))\simeq H^{*}(X,\pi^{*}(\mtc{T}_{\mb{P}^2}\otimes\Omega^1_{\mb{P}^2}))$$$$=H^{*}(\mb{P}^2,\mtc{T}_{\mb{P}^2}\otimes\Omega^1_{\mb{P}^2})\simeq \Ext^{*}(\mtc{T}_{\mb{P}^2},\mtc{T}_{\mb{P}^2}),$$ \textup{implying that $\mtc{E}$ is exceptional since $\mtc{T}_{\mb{P}^2}$ is. We will show that $\mtc{E}$ is $\mu_A$-stable for any sufficiently small $\varepsilon>0$. Suppose we have a sub-line bundle $\mtc{O}_X(D)$ of $\mtc{E}$ with $D=aH-\sum b_iE_i$, we claim that $a\leq 1$. If not, consider the short exact sequence} $$0\longrightarrow\mtc{O}_{X}(-D)\longrightarrow\mtc{O}_{X}(H-D)\longrightarrow\mtc{O}_H(H-D)\simeq \mtc{O}_{\mb{P}^1}(1-a)\longrightarrow0.$$ \textup{Taking the induced long exact sequence on cohomology, we see that the map $$H^1(X,\mtc{O}_X(-D))\longrightarrow H^1(X,\mtc{O}_X(H-D))$$ given by multiplication of a non-zero element in $H^0(X,\mtc{O}_X(H))$ is injective. Now consider the pull-back of the Euler sequence on $\mb{P}^2$ twisted by $\mtc{O}(-D)$:} $$0\longrightarrow\mtc{O}_X(-D)\stackrel{(x_0,x_1,x_2)}{\longrightarrow}\mtc{O}_X(H-D)^{\oplus 3}\longrightarrow\mtc{E}(-D)\longrightarrow0.$$ \textup{As a consequence, the induced exact sequence on cohomology} $$0=H^0(X,\mtc{O}_X(H-D))^{\oplus 3}\longrightarrow H^0(\mtc{E}(-D))\longrightarrow H^1(X,\mtc{O}_X(-D))\longrightarrow H^1(X,\mtc{O}_X(H-D))^{\oplus 3}$$ \textup{implies that $H^0(X,\mtc{E}(-D))=0$, which is a contradiction.}

\textup{We can choose $\varepsilon_0$ sufficiently small such that} $$\mu_A(\mtc{O}_X(D))<3/2=\mu_A(\mtc{T}_{\mb{P}^2})$$ \textup{for all sub-line bundles $O_X(D)$ of $\mtc{E}$ with $a=0$ or $a=1$ as there are only finitely many of them. Now we claim that $\mtc{E}$ is $\mu_A$-stable. It suffices to check that there does not exist any possible destabilizing sub-line bundle $\mtc{O}_X(D)$ with $a<0$. Suppose that $\mtc{O}_X(D)$ is such a line bundle: $D=aH-\sum b_iE_i$ with $a<0$ and} $$\mu_A(\mtc{O}_X(D))=a-\varepsilon_0\sum b_i\geq \frac{3}{2}.$$ \textup{If $b_i>0$ for some $i$, then we may replace $D$ by $D+E_i$: notice that $\mu_A(\mtc{O}_X(D))<\mu_A(\mtc{O}_X(D+E_i))$ and that} $$H^0(\mtc{E}(-D))=H^0(\mtc{E}(-D-E_i))$$ \textup{due to the short exact sequence} $$0\longrightarrow\mtc{E}(-D-E_i)\longrightarrow\mtc{E}(-D)\longrightarrow\mtc{E}(-D)|_{E_i}\longrightarrow0$$ \textup{and $\mtc{E}(-D)|_{E_i}\simeq\mtc{O}_{\mb{P}^1}(-b_i)\oplus \mtc{O}_{\mb{P}^1}(-b_i)$. Now we reduce to the case when $b_i\leq 0$ for all $i$. Consider the short exact sequence} $$0\longrightarrow\mtc{E}(-D-L)\longrightarrow\mtc{E}(-D)\longrightarrow\mtc{E}(-D)|_{L}\longrightarrow0.$$ \textup{We see that $\mtc{E}|_{L}=\pi^{*}\mtc{T}_{\mb{P}^2}|_{L}\simeq \mtc{O}_L(1)\oplus\mtc{O}_L(2)$ by performing the pull-back of the Euler sequence on $\mb{P}^2$ restricted to $L$. It thus follows that} $$\mtc{E}(-D)|_{L}\simeq \mtc{O}_L(1-a+\sum b_i)\oplus\mtc{O}_L(2-a+\sum b_i).$$ \textup{Observe that $2-a+\sum b_i<0$: suppose not, then one gets} $$2+\sum b_i\geq a\geq \varepsilon_0\sum b_i+\frac{3}{2}, \quad (1-\varepsilon_0)(\sum b_i)\geq -\frac{1}{2},$$ \textup{which is impossible. Hence we arrive at the case when $a\geq0$ by performing these two kinds of reduction. This case follows from our choice of $\varepsilon_0$.}

\textup{In fact, $\mtc{E}$ is $\mu_A$-stable for any $\varepsilon$ such that $A$ is ample, i.e. $0<\varepsilon<1/m$. By our argument above, it suffices to check the case where $a=0$ and $a=1$. When $a=0$, notice that $\mtc{E}(-2E_i)$ has no sections because every non-zero tangent vector field on $\mb{P}^2$ vanishes at a point of multiplicity at most one. Thus the sub-line bundle with maximal $A$-slope is $\mtc{O}_X(\sum E_i)$, which satisfies that $\mu_A(\mtc{O}_X(\sum E_i))=m\varepsilon<1<3/2$, and hence does not destabilize $\mtc{E}$ either.}

\textup{When $a=1$, Notice that every section of $\mtc{T}_{\mb{P}^2}(-1)$ vanishes at one point, hence the divisor corresponding to any section of $\mtc{E}(-H)$ is either trivial or $E_i$. In particular, the sub-line bundle with maximal $A$-slope is $\mtc{O}_X(H+E_i)$, whose $A$-slope is $1+\varepsilon<3/2$, and hence does not destabilize $\mtc{E}$.}

\textup{However, in the case when $a=1$, if we take our polarization to be $A=H-(1-\delta)E_1-\sum_{i\geq2} \varepsilon E_i$ such that $0<\delta\ll0$, $0<\varepsilon\ll0$ and $1-\delta+(m-1)\varepsilon<1$, then $\mtc{O}(H+E_1)$ destabilizes $\mtc{E}$. This suggests that we cannot expect that the exceptional bundles, even the constructible ones, to be $\mu_A$-stable for any polarization $A=H-\sum \varepsilon_iE_i$.}

\end{example}

\section{Existence of Stable Sheaves}

Let $X$ be the blow-up of $\mb{P}^2$ along $m$ distinct collinear points. In this section, we first  computationally determine whether the moduli space $M_{X,A}({\bf{v}})$ is nonempty. Then for a generic polarization and an arbitrary character except for one special case, we will give an equivalent condition for the existence of stable sheaves with this character.

\subsection{Generic polarization and sharp Bogomolov inequalities}

In this section we introduce functions of the slope which provide sharp Bogomolov-type inequalities for various stabilities. We follow the treatment in \cite{CH21}.

For a $\mu_A$-stable exceptional bundle $\mtc{E}$, we define a function 
\begin{equation}\nonumber
\DLP_{A,\mtc{E}}(\nu)=
\begin{cases}
P(\nu-\nu(\mtc{E}))-\Delta(\mtc{E}), & \textup{if }\frac{1}{2}K_X.A\leq (\nu-\nu(\mtc{E})).A<0\\
P(\nu(\mtc{E})-\nu)-\Delta(\mtc{E}), & \textup{if }0<(\nu-\nu(\mtc{E})).A\leq-\frac{1}{2}K_X.A\\
\max\{P(\pm(\nu(\mtc{E})-\nu))-\Delta(\mtc{E})\}, & \textup{if }(\nu-\nu(\mtc{E})).A=0
\end{cases}
\end{equation}

\begin{defn}
Let $A=H-\sum \varepsilon_iE_i$ be a polarization and $\mathfrak{C}_A$ be the set of $\mu_A$-stable exceptional bundles on $X$. Define functions $$\DLP_A(\nu)=\sup_{\mtc{E}\in \mathfrak{C}_A \atop |(\nu-\nu(\mtc{E})).A|\leq -\frac{1}{2}K_X.A}\DLP_{A,\mtc{E}}(\nu),$$ and 
$$\DLP^{<r}_A(\nu)=\sup_{\mtc{E}\in \mathfrak{C}_A, r(\mtc{E})<r \atop |(\nu-\nu(\mtc{E})).A|\leq -\frac{1}{2}K_X.A}\DLP_{A,\mtc{E}}(\nu).$$
\end{defn}

\begin{proposition}
Let $\varepsilon$ be generic.
\begin{enumerate}[(i)]
    \item If $\mtc{E}$ is an $A$-semistable exceptional bundle on $X$ of rank $r$, then $$\Delta(\mtc{E})\geq \DLP^{<r}_{A}(\nu(\mtc{E})).$$ 
    \item If $\mtc{E}$ is an $A$-semistable non-semiexceptional bundle on $X$, then $$\Delta(\mtc{E})\geq \DLP_{A}(\nu(\mtc{E})).$$ 
\end{enumerate}
\end{proposition}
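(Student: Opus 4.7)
The plan is to deploy the Drézet–Le Potier argument: pair the given $A$-semistable sheaf against each exceptional bundle $\mtc{F}$ contributing to the supremum, force the vanishing of $\Hom$ and $\Ext^2$ using stability plus the $A$-slope bound $|(\nu-\nu(\mtc{F}))\cdot A|\leq -\tfrac{1}{2}K_X\cdot A$, and then read off the discriminant estimate directly from the Riemann–Roch formula $\chi(\mtc{F},\mtc{G})=r(\mtc{F})r(\mtc{G})(P(\nu(\mtc{G})-\nu(\mtc{F}))-\Delta(\mtc{F})-\Delta(\mtc{G}))$. The three branches in the definition of $\DLP_{A,\mtc{F}}$ correspond to the three sign cases for $(\nu-\nu(\mtc{F}))\cdot A$, arranged so that at least one of the pairings $\chi(\mtc{F},\mtc{G})$, $\chi(\mtc{G},\mtc{F})$ is automatically nonpositive.

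For (ii), let $\mtc{G}$ be $A$-semistable non-semiexceptional of total slope $\nu$, and fix an $\mtc{F}\in\mathfrak{C}_A$ satisfying the slope constraint. In the first branch, where $\mu_A(\mtc{F})>\mu_A(\mtc{G})$, I would rule out nonzero maps $\mtc{F}\to\mtc{G}$ using $\mu_A$-stability of $\mtc{F}$ against $\mu_A$-semistability of $\mtc{G}$; then the inequality $\mu_A(\mtc{G})-\mu_A(\mtc{F})\geq \tfrac{1}{2}K_X\cdot A > K_X\cdot A$ together with Serre duality would yield $\Ext^2(\mtc{F},\mtc{G})\cong\Hom(\mtc{G},\mtc{F}\otimes K_X)^{*}=0$. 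This gives $\chi(\mtc{F},\mtc{G})\leq 0$, and substituting into Riemann–Roch delivers $\Delta(\mtc{G})\geq P(\nu-\nu(\mtc{F}))-\Delta(\mtc{F})=\DLP_{A,\mtc{F}}(\nu)$. The second branch is handled symmetrically, swapping the roles of $\mtc{F}$ and $\mtc{G}$ to conclude $\chi(\mtc{G},\mtc{F})\leq 0$ and $\Delta(\mtc{G})\geq P(\nu(\mtc{F})-\nu)-\Delta(\mtc{F})$.

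The main obstacle will be the third branch, when $(\nu-\nu(\mtc{F}))\cdot A=0$, since the strict $A$-slope inequality used to kill $\Hom$ is no longer available. Here I would exploit the genericity of $\varepsilon$ together with the non-semiexceptional hypothesis on $\mtc{G}$: any nonzero map $\mtc{F}\to\mtc{G}$ would have image of the same $A$-slope as $\mtc{F}$, and $\mu_A$-stability of $\mtc{F}$ forces this image to be a quotient of $\mtc{F}$. Analyzing the Jordan–Hölder filtration of $\mtc{G}$ and invoking primitivity of potentially exceptional characters (from the lemma in Section 2), I would argue that if both $\Hom(\mtc{F},\mtc{G})$ and $\Hom(\mtc{G},\mtc{F})$ were simultaneously nonzero, then $\mtc{F}$ would appear as a Jordan–Hölder factor of $\mtc{G}$, and chasing through the same generic-polarization argument used in the lemma would force every Jordan–Hölder factor to coincide with $\mtc{F}$, making $\mtc{G}$ itself semiexceptional—contradicting the hypothesis. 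Hence at least one Hom vanishes; picking whichever direction yields the larger right-hand side in Riemann–Roch produces $\Delta(\mtc{G})\geq \max\{P(\pm(\nu-\nu(\mtc{F})))\}-\Delta(\mtc{F})=\DLP_{A,\mtc{F}}(\nu)$. Taking the supremum over $\mtc{F}\in\mathfrak{C}_A$ completes (ii).

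For (i), the argument proceeds identically, replacing $\mtc{G}$ by the exceptional bundle $\mtc{E}$ of rank $r$ and restricting $\mtc{F}$ to rank $<r$. The first two branches go through verbatim. In the third branch, the rank inequality $r(\mtc{F})<r(\mtc{E})$ prevents $\mtc{F}\simeq \mtc{E}$, so the same stability plus Jordan–Hölder analysis forces at least one of $\Hom(\mtc{F},\mtc{E})$, $\Hom(\mtc{E},\mtc{F})$ to vanish, and the Riemann–Roch estimate yields $\Delta(\mtc{E})\geq \DLP_{A,\mtc{F}}(\nu(\mtc{E}))$. Taking the supremum over exceptional bundles of rank $<r$ then proves (i).
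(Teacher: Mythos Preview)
Your proposal follows the same Dr\'ezet--Le Potier strategy as the paper, and the first two branches are handled identically. The divergence is in the third branch, where your argument is more tangled than necessary and has a small gap.

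The paper's key simplification---which you gesture at with ``exploit the genericity of $\varepsilon$'' but never actually use---is that for generic $\varepsilon$ the condition $(\nu-\nu(\mtc{F}))\cdot A=0$ forces the \emph{total} slopes to coincide, $\nu=\nu(\mtc{F})$, because $1,\varepsilon_1,\ldots,\varepsilon_m$ are $\mb{Q}$-linearly independent. Once $\nu(\mtc{G})=\nu(\mtc{F})$, everything is immediate: if $\Delta(\mtc{G})=\Delta(\mtc{F})<\tfrac12$ then Lemma~2.9(5) makes $\mtc{G}$ semiexceptional; if $\Delta(\mtc{G})\neq\Delta(\mtc{F})$ then comparing reduced Hilbert polynomials (which now differ only in $\Delta$) via Gieseker $A$-semistability kills one of the two $\Hom$ groups directly, and Riemann--Roch with $P(0)=1$ gives $\Delta(\mtc{G})\geq 1-\Delta(\mtc{F})=\tfrac12+\tfrac{1}{2r(\mtc{F})^2}=\DLP_{A,\mtc{F}}(\nu)$. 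No Jordan--H\"older analysis is needed.

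Your version can be repaired, but as written the step ``picking whichever direction yields the larger right-hand side'' is not justified: you only get the inequality corresponding to whichever $\Hom$ actually vanishes, and without $\nu=\nu(\mtc{F})$ there is no reason this matches the $\max$ in the definition of $\DLP_{A,\mtc{F}}$. Once you insert the observation $\nu=\nu(\mtc{F})$, both branches of the max equal $P(0)$ and the issue disappears---but then the Jordan--H\"older detour is superfluous. For part~(i), note also that the third branch is in fact vacuous: the argument would yield $\Delta(\mtc{E})\geq\tfrac12+\tfrac{1}{2r(\mtc{F})^2}$, contradicting $\Delta(\mtc{E})<\tfrac12$, so no $\mu_A$-stable exceptional $\mtc{F}$ of smaller rank with $\nu(\mtc{F})=\nu(\mtc{E})$ can exist.
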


\begin{proof}
If $\mtc{E}$ is a $\mu_A$-semistable sheaf with $$\frac{1}{2}K_X.A\leq \mu_A(\mtc{E})-\mu_A(\mtc{V})<0,$$ when $\hom(\mtc{V},\mtc{E})=\hom(\mtc{E},\mtc{V}(K_X))$ by stability and duality. Therefore $\chi(\mtc{V},\mtc{E})\leq 0$ and $$\Delta(\mtc{E})\geq P(\nu(\mtc{E})-\nu(\mtc{V}))-\Delta(\mtc{V}).$$ Likewise, if $$0<\mu_A(\mtc{E})-\mu_A(\mtc{V})\leq -\frac{1}{2}K_X.A,$$ then the inequality $\chi(\mtc{E},\mtc{V})\leq 0$ provides a lower bound $$\Delta(\mtc{E})\geq P(\nu(\mtc{V})-\nu(\mtc{E}))-\Delta(\mtc{V}).$$

Notice that if $\varepsilon$ is generic, then $(\nu-\nu(\mtc{V})).A=0$ happen only when $\nu=\nu(\mtc{V})$. Suppose that $\mtc{E}$ is $A$-semistable of total slope $\nu(\mtc{E})=\nu(\mtc{V})$. If $\Delta(\mtc{E})=\Delta(\mtc{V})<1/2$, then $\mtc{E}$ is semistable. If $\Delta(\mtc{E})\neq\Delta(\mtc{V})$, then either $\hom(\mtc{E},\mtc{V})=0$ or $\hom(\mtc{V},\mtc{E})=0$ by $A$-semistability, and in either case Riemann-Roch implies $$\Delta(\mtc{E})\geq\DLP_{A,\mtc{V}}(\nu)=\frac{1}{2}+\frac{1}{2r(\mtc{V})^2}.$$ Thus if $A$ is generic, then $\Delta\geq \DLP_{H,\mtc{E}}(\nu)$ whenever there is an $A$-semistable sheaf of total slope $\nu$ and discriminant $\Delta$ satisfying $|(\nu-\nu(\mtc{V})).A|\leq -\frac{1}{2}K_X.A$.

\end{proof}

\begin{defn}
Let $A=H-\sum \varepsilon_iE_i$ be a polarization and ${\bf{v}}$ be a Chern character. Define $$\delta_{A}^{\mu-s}(\nu)=\inf\left\{\Delta\geq\frac{1}{2}:\textup{there is a }\mu_A\textup{-stable sheaf of total slope }\nu \textup{ and discriminant } \Delta\right\}.$$ We similarly define functions $\delta^{s}_A$, $\delta^{ss}_A$, $\delta^{\mu-ss}_A$.
\end{defn}

It is immediate that $$\delta_A^{\mu-ss}(\nu)\leq\delta_A^{ss}(\nu)\leq\delta_A^{s}(\nu)\leq\delta_A^{\mu-s}(\nu).$$ Now let us compare the various $\delta$-functions in the case where the polarization $A=H-\sum \varepsilon_iE_i$ is generic.

\begin{theorem} \textup{(Theorem 9.2 \cite{CH21})}
Let $\nu\in \Pic(X)_{\mb{Q}}$, and $A$ be a generic polarization. Then $$\delta_A^{ss}(\nu)=\delta_A^s(\nu)=\delta_A^{\mu-s}(\nu).$$ If moreover there is no $\mu_A$-stable exceptional bundle of total slope $\nu$, then these numbers also equal $\delta_A^{\mu-ss}(\nu)$.
\end{theorem}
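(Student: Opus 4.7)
The chain $\delta_A^{\mu-ss}(\nu)\leq\delta_A^{ss}(\nu)\leq\delta_A^{s}(\nu)\leq\delta_A^{\mu-s}(\nu)$ is immediate from the standard implications $\mu_A\textup{-stable}\Rightarrow A\textup{-stable}\Rightarrow A\textup{-semistable}\Rightarrow\mu_A\textup{-semistable}$, so the plan is to establish the reverse inequalities. My first step would be to work inside the irreducible stack $\mtc{P}_H({\bf{v}})$: any $\mu_A$-semistable sheaf of character ${\bf{v}}=(r,\nu,\Delta)$ is automatically $H$-prioritary, so as soon as any of the four $\delta$-functions is finite at ${\bf{v}}$, the stack $\mtc{P}_H({\bf{v}})$ is non-empty, and hence irreducible by Walter's theorem. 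Each of the four stability conditions cuts out an open substack of $\mtc{P}_H({\bf{v}})$, and hence is dense as soon as non-empty. The four $\delta$-functions therefore coincide at $\nu$ precisely when the four open loci are simultaneously non-empty.

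To prove $\delta^{ss}_A(\nu)=\delta^s_A(\nu)=\delta^{\mu-s}_A(\nu)$, I would show that the existence of a Gieseker-semistable sheaf of character ${\bf{v}}$ forces the existence of a $\mu_A$-stable one. Genericity of $A$ plays the central role here: the equality $\mu_A(\mtc{F})=\mu_A(\mtc{E})$ cuts out a countable union of hyperplanes in the polarization parameter space indexed by the possible total slopes $\nu(\mtc{F})$, and a generic $A$ lies off every such hyperplane except the one corresponding to $\nu(\mtc{F})=\nu$. Consequently every Jordan--H\"older factor of a $\mu_A$-semistable sheaf of total slope $\nu$ again has total slope $\nu$, and the discriminants add without cross terms, $r\Delta=\sum r_i\Delta_i$. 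I would then stratify the strictly $\mu_A$-semistable locus of $\mtc{P}_H({\bf{v}})$ by JH-type, and for each fixed tuple of factors $\mtc{G}_1,\ldots,\mtc{G}_n$ bound the dimension of the extension stratum by $\sum_{i<j}\ext^1(\mtc{G}_j,\mtc{G}_i)$. Comparing with the expected dimension of $\mtc{P}_H({\bf{v}})$ shows each such stratum has positive codimension unless all factors are mutually isomorphic, forcing a polystable form $\mtc{G}^{\oplus k}$; the latter case is then absorbed by induction on the rank, with base case a $\mu_A$-stable bundle of character ${\bf{v}}/k$.

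For the strengthened equality $\delta^{\mu-ss}_A(\nu)=\delta^{ss}_A(\nu)$ under the assumption that no $\mu_A$-stable exceptional bundle has total slope $\nu$, the issue is that a $\mu_A$-semistable $\mtc{E}$ of character ${\bf{v}}$ can contain a subsheaf $\mtc{F}$ with $\nu(\mtc{F})=\nu$ and $\Delta(\mtc{F})<\Delta(\mtc{E})$, dragging $\delta^{\mu-ss}_A$ strictly below $\delta^{ss}_A$. The preliminary lemma reproduced from \cite{CH21} forces any such $\mtc{F}$ with $\Delta(\mtc{F})<1/2$ to be exceptional, which is ruled out by hypothesis. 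The remaining range $\Delta(\mtc{F})\geq 1/2$ is handled by the same JH-extension deformation argument as above, deforming the extension $0\to\mtc{F}\to\mtc{E}\to\mtc{G}\to 0$ inside $\mtc{P}_H({\bf{v}})$ to a Gieseker-semistable sheaf of the same character.

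The main obstacle will be the codimension estimate for the JH stratification: one must verify that the ``iterated extension'' morphism from the moduli of tuples $(\mtc{G}_i)$ together with their extension data into $\mtc{P}_H({\bf{v}})$ is never dominant, apart from the polystable degeneration. Genericity of $A$ and the explicit unirational parameterization of $\mtc{P}_H({\bf{v}})$ from Proposition \ref{2} are crucial here, since they allow the stratification to be traced through the concrete two-term resolutions by direct sums of line bundles, turning strict semistability into a rank condition of positive codimension on the defining morphism $s$.
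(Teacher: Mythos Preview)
The paper does not prove this statement; it is quoted from \cite{CH21} (Theorem~9.2 there) and used as a black box, so there is no argument in the present text to compare your proposal against.

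That said, two remarks on your sketch. First, a minor slip: Walter's irreducibility theorem is stated for $\mtc{P}_F({\bf v})$, not $\mtc{P}_H({\bf v})$; you recover what you need because $H$-prioritary implies $F$-prioritary ($H-F_i=E_i$ is effective), so $\mtc{P}_H({\bf v})$ sits as an open substack of the irreducible $\mtc{P}_F({\bf v})$.

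Second, and more substantively: the $\delta$-functions constrain only $(\nu,\Delta)$, not the rank, so you are not obliged to produce a $\mu_A$-stable sheaf of the \emph{same} character ${\bf v}$. Passing directly to Jordan--H\"older factors short-circuits most of your codimension machinery. For generic $A$ the Gieseker JH factors of an $A$-semistable sheaf all share $(\nu,\Delta)$ and are $A$-stable, which already gives $\delta^{ss}_A=\delta^s_A$; for $\delta^s_A=\delta^{\mu-s}_A$ the $\mu_A$-JH factors of an $A$-stable sheaf are $\mu_A$-stable of slope~$\nu$, and since $r\Delta=\sum r_j\Delta_j$ some factor has $\Delta_j\le\Delta$, so either $\Delta_j\ge\tfrac12$ and you are done, or $\Delta_j<\tfrac12$ and that factor is exceptional of slope~$\nu$. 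Your Schatz-stratum route can presumably be made to work---indeed the paper reproduces exactly this style of argument from \cite{CH21} for the Harder--Narasimhan stratification (Theorem labelled ``Theorem 5.3 \cite{CH21}'' here)---but the step ``comparing with the expected dimension of $\mtc{P}_H({\bf v})$ shows each such stratum has positive codimension'' is precisely where real content is needed: the na\"ive bound $\sum_{i<j}\ext^1(\mtc{G}_j,\mtc{G}_i)$ must be played off against the moduli of the factors and the $\hom$ terms, and for factors sharing the same slope this bookkeeping does not obviously come out in your favour. You flag this as the main obstacle, and it is.
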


The main result about existence of sheaves with discriminant above $\delta_A^{\mu-s}(\nu)$ is the following:

\begin{theorem} \textup{(Theorem 9.7 \cite{CH21})}
Let ${\bf{v}}=(r,\nu,\Delta)\in K(X)$ and $A$ be any polarization.
\begin{enumerate}
    \item If $\Delta>\delta_A^{\mu-s}(\nu)$, then there are $\mu_A$-stable sheaves of character ${\bf{v}}$.
    \item If there is a non-exceptional $\mu_A$-stable sheaf of character ${\bf{v}}$, then $\Delta\geq \delta_A^{\mu-s}(\nu)$.
    \item If there is a $\mu_A$-stable sheaf of slope $\nu$ and discriminant $\delta^{\mu-s}_A(\nu)>\frac{1}{2}$, then non-exceptional $\mu_A$-stable sheaves of character ${\bf{v}}$ exist if and only if $\Delta\geq \delta_A^{\mu-s}(\nu)$.
\end{enumerate}
\end{theorem}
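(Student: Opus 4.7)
The plan is to establish (2) first, since it is an immediate consequence of Lemma 2.6(3), then carry out the substantive construction in (1), and finally deduce (3) by combining the two. For (2), if $\mtc{E}$ is a non-exceptional $\mu_A$-stable sheaf of character ${\bf{v}}$, then the contrapositive of Lemma 2.6(3) forces $\Delta(\mtc{E}) \geq 1/2$, so $\Delta$ lies in the set whose infimum defines $\delta_A^{\mu-s}(\nu)$, and hence $\Delta \geq \delta_A^{\mu-s}(\nu)$.

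For (1), the strategy is to start with a $\mu_A$-stable sheaf of slope $\nu$ whose discriminant is strictly below $\Delta$ (provided by the infimum definition), and then raise the discriminant to $\Delta$ by elementary modifications. Given $\Delta > \delta_A^{\mu-s}(\nu)$, choose a $\mu_A$-stable $\mtc{F}_0$ of slope $\nu$ with $\Delta(\mtc{F}_0) < \Delta$. The first substep is to reach rank $r$: if $r(\mtc{F}_0) \neq r$, one produces a $\mu_A$-stable sheaf $\mtc{F}_1$ of rank $r$, slope $\nu$, and discriminant $\Delta_1 \leq \Delta$ by deforming a direct sum $\mtc{F}_0^{\oplus k}$ or a suitable extension inside the irreducible stack of $F$-prioritary sheaves (Walter's irreducibility theorem, together with the complete family constructed in Section 4.2). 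The second substep is to perform $k = r(\Delta - \Delta_1) \in \mb{Z}_{\geq 0}$ elementary modifications at distinct general points,
$$0 \longrightarrow \mtc{F}' \longrightarrow \mtc{F} \longrightarrow \mtc{O}_p \longrightarrow 0.$$
Each such modification preserves $r$ and $c_1$, raises $\Delta$ by exactly $1/r$, and preserves $\mu_A$-stability by a one-line argument: any proper subsheaf $\mtc{G} \subsetneq \mtc{F}'$ of strictly smaller rank is automatically a proper subsheaf of $\mtc{F}$, and $\mu_A$-stability of $\mtc{F}$ forces $\mu_A(\mtc{G}) < \mu_A(\mtc{F}) = \mu_A(\mtc{F}')$. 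After $k$ steps we arrive at a $\mu_A$-stable sheaf of character ${\bf{v}}$.

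For (3), the hypothesis provides a $\mu_A$-stable sheaf $\mtc{F}_0$ of discriminant $\delta_A^{\mu-s}(\nu) > 1/2$; this $\mtc{F}_0$ is automatically non-exceptional because every exceptional bundle satisfies $\Delta = \tfrac{1}{2} - \tfrac{1}{2r^2} < \tfrac{1}{2}$. Applying (1) with $\mtc{F}_0$ as the starting point realizes every $\Delta \geq \delta_A^{\mu-s}(\nu)$ by a $\mu_A$-stable sheaf whose discriminant is $> 1/2$ and hence non-exceptional; the reverse implication is (2). The only delicate point in the whole argument is the rank-reduction in the first substep of (1): given that some $\mu_A$-stable sheaf of slope $\nu$ exists, one must produce one of exactly the prescribed rank $r$, which requires the irreducibility of the prioritary stack together with a Zariski-open genericity argument showing that generic deformations of the natural semistable candidates are actually $\mu_A$-stable. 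The elementary-modification step and the other parts are essentially formal.
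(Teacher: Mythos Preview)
The paper does not prove this theorem; it is quoted verbatim from \cite{CH21} (Theorem~9.7 there) and no argument is supplied. So there is no ``paper's own proof'' to compare against, and your proposal must stand on its own.

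Parts (2) and (3) of your sketch are fine. The substantive issue is the rank-matching substep in (1), which you flag as ``delicate'' but do not actually carry out. You propose to reach rank $r$ by ``deforming a direct sum $\mtc{F}_0^{\oplus k}$ or a suitable extension inside the irreducible stack of $F$-prioritary sheaves.'' But $\mtc{F}_0^{\oplus k}$ has rank $kr_0$, and Walter's irreducibility theorem applies to a \emph{fixed} Chern character: deforming inside $\mtc{P}_F(k\,\che(\mtc{F}_0))$ never produces a sheaf of character ${\bf v}$ unless $k\,\che(\mtc{F}_0)={\bf v}$, which would force both $r_0\mid r$ and $\Delta_0=\Delta$. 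Likewise, any extension of sheaves of slope $\nu$ built from $\mtc{F}_0$ and its elementary modifications has rank a multiple of $r_0$. Nothing in your outline explains how to pass from rank $r_0$ to an arbitrary rank $r$ with $(r,\nu,\Delta)\in K(X)$ when $r_0\nmid r$.

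This is exactly the nontrivial content of the result in \cite{CH21}, and it cannot be dismissed as a genericity argument. The actual proof there proceeds differently: one shows, essentially by analysing the Harder--Narasimhan filtration of the general $F$-prioritary sheaf of character ${\bf v}$ (using the orthogonality $\chi({\bf gr}_i,{\bf gr}_j)=0$ and the $\DLP$ inequalities), that if the general prioritary sheaf were not $\mu_A$-stable then one of its HN factors would violate $\Delta>\delta_A^{\mu-s}(\nu)$. Your elementary-modification argument is correct and is used in \cite{CH21}, but only \emph{after} one has a $\mu_A$-stable sheaf of the correct rank; getting that sheaf is the real work, and your proposal does not supply it.
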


\subsection{Harder-Narasimhan filtration}

Let $X=X_m$ be the blow-up of $\mb{P}^2$ along $m$ collinear points, and $A=H-\sum\varepsilon_i E_i$ be a polarization of $X$, where $\varepsilon_i>0$ is a rational number such that $\sum\varepsilon_i<1$.

Let $\mtc{E}_s$ be a complete family of torsion-free coherent sheaves on $(X,A)$, which is both $H$-prioritary and $F_i$-prioritary for all $i$, parameterized by a smooth algebraic variety $S$. Consider the $r(\varepsilon)A$-Harder-Narasimhan filtration of a general sheaf $\mtc{E}_s$, where $r(\varepsilon)$ is the smallest positive integer such that $r(\varepsilon)A$ is an integral divisor. Suppose this Harder-Narasimhan filtration has length $l$, and the $r(\varepsilon)A$-semistable quotients $\gr_{i,s}$ have corresponding $r(\varepsilon)A$-Hilbert polynomial $P_i$, reduced $r(\varepsilon)A$-Hilbert polynomial $p_1>\cdots>p_l$, and Chern characters ${\bf{gr}}_i=(r_i,\nu_i,\Delta_i)$.

The next lemma is useful in bounding the polarization in Section 6.3. We include the proof here.

\begin{lemma} \textup{(Lemma 5.1 \cite{CH21})}
A general sheaf $\mtc{E}_s$ in this family satisfies that $$0\leq \mu_{\max,A}(\mtc{E}_s)-\mu_{\min,A}(\mtc{E}_s)\leq1.$$ 
\end{lemma}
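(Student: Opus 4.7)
The plan is to bound the range of $A$-slopes appearing in the Harder--Narasimhan filtration of a general $\mtc{E}_s$ by first bounding, for each curve class $C\in\{H,F_1,\dots,F_m\}$, the corresponding difference $\mu_C(\mtc{F})-\mu_C(\mtc{Q})$, where $\mtc{F}\subseteq\mtc{E}_s$ is the maximal destabilizing subsheaf and $\mtc{Q}$ is the minimal destabilizing quotient of the $A$-HN filtration, and then combining these inequalities using that $A$ lies in the positive span of the $C$'s. The essential input is Corollary 2.6 (a consequence of Walter's theorem): a general sheaf in a complete family of $C$-prioritary sheaves restricts to a balanced bundle on a general smooth rational curve in $|C|$.

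First observe that $A$ admits a convex decomposition
\[
A=\lambda_0 H+\sum_{i=1}^m \lambda_i F_i
\]
with $\lambda_i>0$ and $\lambda_0+\sum_i\lambda_i=1$; this follows by expressing $F_i$ in terms of $H$ and $E_i$ and solving, using the ampleness constraint $\sum\varepsilon_i<1$. Because $S$ is complete and $\mtc{E}_s$ is $H$- and $F_i$-prioritary, each induced classifying map $S\to\mtc{P}_C(\mathbf{v})$ is smooth, so a general $s\in S$ (chosen in the intersection of the finitely many resulting open dense subsets of $S$) satisfies that $\mtc{E}_s|_C$ is balanced on a general representative of $|C|$ simultaneously for $C=H$ and $C=F_i$, i.e.\ of the form $\mtc{O}_{\mb{P}^1}(a_C)^{r-k_C}\oplus\mtc{O}_{\mb{P}^1}(a_C+1)^{k_C}$ with $0\leq k_C<r$.

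Restricting the HN filtration to such a general $C$, the sheaf $\mtc{F}|_C$ embeds into this balanced bundle, forcing $\mu_C(\mtc{F}|_C)\leq a_C+1$, while $\mtc{Q}|_C$ is a quotient of it, so $\mu_C(\mtc{Q}|_C)\geq a_C$. Hence $\mu_C(\mtc{F})-\mu_C(\mtc{Q})\leq 1$ for each such $C$. By linearity of $\mu_{\bullet}$ in the divisor class and the convex decomposition of $A$,
\[
\mu_A(\mtc{F})-\mu_A(\mtc{Q})=\lambda_0\bigl(\mu_H(\mtc{F})-\mu_H(\mtc{Q})\bigr)+\sum_i\lambda_i\bigl(\mu_{F_i}(\mtc{F})-\mu_{F_i}(\mtc{Q})\bigr)\leq \lambda_0+\sum_i\lambda_i=1,
\]
and the lower bound $0\leq\mu_{\max,A}(\mtc{E}_s)-\mu_{\min,A}(\mtc{E}_s)$ is tautological. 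The only subtle point is the simultaneous balanced-restriction step, which reduces to intersecting finitely many nonempty open subsets of $S$ obtained from the Walter smoothness results; there is no genuine obstacle beyond this genericity bookkeeping.
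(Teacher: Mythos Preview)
Your proof is correct and follows essentially the same route as the paper's: decompose $A=(1-\sum\varepsilon_i)H+\sum\varepsilon_i F_i$ as a convex combination, use balancedness of the restriction of a general $\mtc{E}_s$ to a general member of $|H|$ and each $|F_i|$ to bound $\mu_C(\gr_1)-\mu_C(\gr_l)\leq 1$, and then combine linearly. The paper phrases the middle step slightly more generally (bounding $\mu_{\max,\mtc{O}(C)}(\mtc{E}_s)-\mu_{\min,\mtc{O}(C)}(\mtc{E}_s)$ over all subsheaves and quotients rather than just the fixed $A$-HN extremal factors), but the content is the same.
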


\begin{proof}
First suppose $C$ is a smooth rational curve, and then general $\mtc{E}_s|_C$ is a locally free sheaf. Recall that if $\mtc{E}_s/S$ is a complete family of $\mtc{O}(C)$-prioritary sheaves which are locally free along $C$, then the general $\mtc{E}_s$ has restriction $\mtc{E}_s|_C$ which is balanced so that
$$\mu_{\max}(\mtc{E}_s|C)-\mu_{\min}(\mtc{E}_s|C)\leq 1.$$
Observe that $\mu_{\max,\mtc{O}_C}(\mtc{E}_s)\leq \mu_{\max}(\mtc{E}_s|C)$. Indeed, suppose $\mtc{F}\subseteq \mtc{E}_s$ is a subsheaf. Then
$$\mu_{\mtc{O}_C}(\mtc{F})=\mu(\mtc{F}|_C)\leq \mu_{\max}(\mtc{E}_s|C).$$
Analogously we have $\mu_{\min,\mtc{O}_C}(\mtc{E}_s)\geq \mu_{\min}(\mtc{E}_s|C)$, and we conclude that
$$\mu_{\max,\mtc{O}(C)}(\mtc{E}_s)-\mu_{\min,\mtc{O}(C)}(\mtc{E}_s)\leq 1$$
holds for a general $s\in S$. (Even if $L$ is not ample, we write for example $\mu_{\max,L}(\mtc{E})$ for the maximum $L$-slope of a subsheaf of $\mtc{E}$, if it exists. For $L=\mtc{O}(C)$, the above restriction argument shows the maximum exists.)

Now observe that if $\mtc{E}_s/S$ is a complete family of $(F_1,...,F_m,H)$-prioritary sheaves, then 
\begin{equation}\nonumber
    \begin{split}
      mu_{\max,A}(\mtc{E}_s)-\mu_{\max,A}(\mtc{E}_s)&=\mu_A(\gr_{1,s})-\mu_A(\gr_{l,s})\\
      &=(\nu_1-\nu_l).((1-\sum\varepsilon_i)H+\sum \varepsilon_i F_i)\\
      &\leq (1-\sum\varepsilon_i)+\sum\varepsilon_i=1.  
    \end{split}
\end{equation}

\end{proof}

\begin{lemma} \textup{(Lemma 5.2 \cite{CH21})}
With the notation above, we have $\chi({\bf{gr}}_i,{\bf{gr}}_j)=0$ for all $i<j$.
\end{lemma}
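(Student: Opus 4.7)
The plan is to identify the HN type $(\mathbf{gr}_1, \ldots, \mathbf{gr}_l)$ of a general $\mathcal{E}_s$ with a Shatz stratum in $\mathcal{P} := \mathcal{P}_{H, F_1, \ldots, F_m}(\mathbf{v})$, compute its codimension via a dimension count on a flag stack, and exploit completeness of the family. Since $\mathcal{E}_s$ is general in $S$ and $\{\mathcal{E}_s\}$ is complete, the image of $S$ in $\mathcal{P}$ is dense and the HN stratum containing $\mathcal{E}_s$ has codimension zero in $\mathcal{P}$. Rewriting this codimension as a sum of nonnegative quantities $-\chi(\mathbf{gr}_i, \mathbf{gr}_j)$ will then force each term to vanish.

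The main inputs are two $\Ext$ vanishings. Since each $\gr_i$ is $r(\varepsilon)A$-Gieseker semistable with $p_i > p_j$ for $i < j$, standard semistability yields $\hom(\gr_i, \gr_j) = 0$. For the second, by Serre duality $\Ext^2(\gr_a, \gr_b) \cong \Hom(\gr_b, \gr_a \otimes K_X)^*$. A direct computation gives $-K_X \cdot A = 3 - \sum \varepsilon_i > 2$, and the previous lemma bounds $|\mu_A(\gr_a) - \mu_A(\gr_b)| \leq 1$, so
\[
\mu_A(\gr_b) - \mu_A(\gr_a \otimes K_X) = -(\mu_A(\gr_a) - \mu_A(\gr_b)) - K_X \cdot A > -1 + 2 = 1 > 0
\]
for all pairs $a, b$. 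Since $\gr_b$ and $\gr_a \otimes K_X$ are both $\mu_A$-semistable, this strict slope gap forces $\Hom(\gr_b, \gr_a \otimes K_X) = 0$, hence $\ext^2(\gr_a, \gr_b) = 0$. Applied with $a = b$, this also gives $\ext^2(\gr_i, \gr_i) = 0$, so each semistable factor stack is smooth of dimension $-\chi(\gr_i, \gr_i)$.

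With these vanishings in hand the Shatz stratification argument proceeds. The flag stack parameterizing chains $\mathcal{E}_1 \subset \cdots \subset \mathcal{E}_l = \mathcal{E}$ with graded pieces of type $\mathbf{gr}_i$ maps isomorphically onto the HN stratum by uniqueness of the HN filtration, and a standard computation gives its dimension as $-\sum_i \chi(\gr_i, \gr_i) + \sum_{i<j}(\ext^1(\gr_j, \gr_i) - \hom(\gr_j, \gr_i))$. Subtracting from $\dim \mathcal{P} = -\chi(\mathcal{E}, \mathcal{E})$ (which uses $F$-prioritary to force $\ext^2(\mathcal{E}, \mathcal{E}) = 0$) and expanding $\chi(\mathcal{E}, \mathcal{E})$ across the grading, the cross terms simplify using $\chi(\gr_j, \gr_i) = \hom(\gr_j, \gr_i) - \ext^1(\gr_j, \gr_i) + \ext^2(\gr_j, \gr_i)$ together with the vanishing of $\ext^2$, yielding codimension $\sum_{i<j} -\chi(\gr_i, \gr_j)$. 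Each summand equals $\ext^1(\gr_i, \gr_j) \geq 0$, and since the codimension is zero each term must vanish.

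The main obstacle is the rigorous stacky setup: constructing the flag stack, verifying that its map to the HN stratum is an isomorphism (which uses uniqueness of HN filtrations), and carrying out the dimension count with careful bookkeeping of automorphisms of filtered sheaves. Once this infrastructure is in place, the proof reduces to the linear-algebraic manipulation of Euler characteristics driven entirely by the slope gap established above.
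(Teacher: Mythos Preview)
Your proposal is correct and follows exactly the Shatz stratification argument that the paper defers to \cite{CH21} for; indeed, the paper's own proof of the subsequent Theorem (5.3 of \cite{CH21}) explicitly invokes ``the same computation as in the previous lemma'' to conclude that a Shatz stratum has codimension zero, confirming that your codimension calculation is the intended method. One small correction: $F$-prioritariness alone gives $\ext^2(\mtc{E},\mtc{E}(-F))=0$, not $\ext^2(\mtc{E},\mtc{E})=0$; the latter follows instead from the slope gap you already established (apply your Serre duality argument with $\gr_a$, $\gr_b$ replaced by the HN-filtered $\mtc{E}$ itself, using $\mu_{\max,A}(\mtc{E})-\mu_{\min,A}(\mtc{E})\leq 1<-K_X\cdot A$).
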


The following theorem provides an algorithm to determine stable characters inductively. We will use it to determine a class of special characters that cannot be detected by the weak DL condition given in Definition \ref{8}. This is proved for Hirzebruch surfaces in \cite{CH21}. We repeat the argument here for the reader's convenience.

\begin{theorem} \textup{(Theorem 5.3 \cite{CH21})}
Suppose ${\bf{w}}_1,...,{\bf{w}}_k\in K(X)$ are characters of positive rank satisfying the following properties:
\begin{enumerate}
    \item ${\bf{w}}_1+\cdots+{\bf{w}}_k={\bf{v}}$.
    \item $q_1>\cdots>q_k$, where $q_i$ is the reduced $A$-Hilbert polynomial corresponding to ${\bf{w}}_i$.
    \item $\mu_A({\bf{w}}_1)-\mu_A({\bf{w}}_k)\leq 1$.
    \item $\chi({\bf{w}}_i,{\bf{w}}_j)=0$ for $i<j$.
    \item The moduli space $M_{A}({\bf{w}}_i)$ is nonempty for each $i$.
\end{enumerate}
Then $k=l$ and ${\bf{gr}}_i={\bf{w}}_i$ for each $i$.
\end{theorem}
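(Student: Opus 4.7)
The strategy is to construct a specific sheaf $\mtc{E}\in\mtc{P}_{F_1,\ldots,F_m,H}({\bf{v}})$ whose $A$-Harder-Narasimhan filtration realizes the proposed data $({\bf{w}}_1,\ldots,{\bf{w}}_k)$, and then conclude via irreducibility of the prioritary stack and completeness of the family $\{\mtc{E}_s\}_{s\in S}$ that a general $\mtc{E}_s$ must have the same HN data.

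By hypothesis (5), pick general $\mtc{F}_i\in M_A({\bf{w}}_i)$; each is $\mu_A$-semistable and hence $H$-prioritary. For $i<j$ I would first establish three Ext-vanishings. From stability together with (2), $\Hom(\mtc{F}_i,\mtc{F}_j)=0$. For the $\Ext^2$ vanishing, note that $K_X.A=-3+\sum\varepsilon_i<-2$ and (3) gives $\mu_A(\mtc{F}_i)-\mu_A(\mtc{F}_j)\leq 1$, so $\mu_A(\mtc{F}_i\otimes K_X)<\mu_A(\mtc{F}_j)$; semistability of $\mtc{F}_j$ forces $\Hom(\mtc{F}_j,\mtc{F}_i\otimes K_X)=0$, and Serre duality yields $\Ext^2(\mtc{F}_i,\mtc{F}_j)=0$. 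Combined with (4), this pins down $\Ext^1(\mtc{F}_i,\mtc{F}_j)=0$ for $i<j$ as well.

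Next, I would form the iterated extension $0\to\mtc{E}_{i-1}\to\mtc{E}_i\to\mtc{F}_i\to 0$ with a general class in $\Ext^1(\mtc{F}_i,\mtc{E}_{i-1})$, starting from $\mtc{E}_0=0$ and ending with $\mtc{E}:=\mtc{E}_k$. Condition (2) together with semistability of each $\mtc{F}_i$ guarantees that this is precisely the $A$-HN filtration of $\mtc{E}$ with $\gr_i(\mtc{E})=\mtc{F}_i$. Using the vanishings above, together with the fact that each $\mtc{F}_i$ is $H$- and $F_j$-prioritary (for generic choice), an inductive application of $\Ext(\mtc{E}_i,-(D))$ for $D\in\{H,F_1,\ldots,F_m\}$ gives $\Ext^2(\mtc{E},\mtc{E}(-D))=0$, placing $\mtc{E}\in\mtc{P}_{F_1,\ldots,F_m,H}({\bf{v}})$.

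Finally, the space of such iterated extensions (letting $\mtc{F}_i$ vary in $M_A({\bf{w}}_i)$ and the extension classes vary) has dimension expressible in terms of $\dim M_A({\bf{w}}_i)$ and $\ext^1(\mtc{F}_j,\mtc{F}_i)$ for $j\geq i$; using (4) and the vanishings from the second paragraph, a Riemann-Roch bookkeeping shows this dimension equals $\dim\mtc{P}_{F_1,\ldots,F_m,H}({\bf{v}})=1-\chi({\bf{v}},{\bf{v}})$. Hence the HN stratum of type $({\bf{w}}_i)$ is open and nonempty in the irreducible stack $\mtc{P}_{F_1,\ldots,F_m,H}({\bf{v}})$. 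Since $\{\mtc{E}_s\}/S$ is a complete family of sheaves in this stack, a general $\mtc{E}_s$ lies in this open stratum, so $k=l$ and $\gr_i={\bf{w}}_i$. I expect the main obstacle to be this dimension-count step, since it must show that expected and actual codimensions of the HN stratification agree; once the Ext-vanishings of paragraph two are in place, the remainder is a careful but essentially formal Riemann-Roch computation leveraging (4).
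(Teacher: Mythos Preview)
Your approach is essentially the same as the paper's: both construct a prioritary sheaf with the prescribed Harder--Narasimhan type, then use the vanishings $\hom=\ext^1=\ext^2=0$ between factors (for $i<j$) to see that the corresponding Schatz stratum has codimension zero, forcing it to be the generic HN type. The only cosmetic differences are that the paper takes the direct sum $\mtc{U}=\bigoplus_i\mtc{W}_i$ rather than an iterated extension (which is simpler, since prioritariness then reduces to checking $\Ext^2(\mtc{W}_i,\mtc{W}_j(-F_n))=0$ directly from the slope bound), and it builds the complete family via a Quot scheme rather than appealing to the prioritary stack; also note that the stack dimension is $-\chi({\bf{v}},{\bf{v}})$, not $1-\chi({\bf{v}},{\bf{v}})$, though this does not affect your codimension-zero conclusion.
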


\begin{proof}
Pick $A$-semistable sheaves $\mtc{W}_i\in M_{A}({\bf{w}}_i)$ for each $i$, and consider the sheaf $$\mtc{U}:=\bigoplus_i\mtc{W}_i$$ so that $\mtc{U}$ has character ${\bf{v}}$ and the Harder-Narasimhan filtration of $\mtc{U}$ has factors $\mtc{W}_1,...,\mtc{W}_k$. Then by assumption $$\mu_{\max,A}(\mtc{U})-\mu_{\min,A}(\mtc{U})=\mu_A(\mtc{W}_1)-\mu_A(\mtc{W}_k)\leq1,$$ so that $$\Ext^2(\mtc{W}_i,\mtc{W}_j(-F_n))\simeq \Hom(\mtc{W}_j,\mtc{W}_i(K_X+F_n))^{*}=0,$$ $\mtc{U}$ is both $H$-prioritary and $F_n$-prioritary for each $n$.

Now we can construct a complete family $\mtc{U}_t/\Sigma$ parameterized by a smooth, irreducible variety $\Sigma$ such that $\mtc{U}=\mtc{U}_{t_0}$ for some $t_0\in \Sigma$. Let $d\gg 0$ be sufficiently large and divisible, let $\chi=\chi(\mtc{O}_X(-dA),\mtc{U})$, and consider the universal family of quotients $\mtc{U}_t/\Sigma$ on $\Sigma=\Quot(\mtc{O}_X(-dA)^{\chi},\che(\mtc{U}))$ parameterizing quotients $$0\longrightarrow\mtc{K}_t\longrightarrow\mtc{O}_X(-dA)^{\chi}\longrightarrow\mtc{U}_t\longrightarrow0.$$
Let $t_0\in\Sigma$ be the point corresponding to the canonical evaluation $$\mtc{O}_X(-dA)\otimes\Hom(\mtc{O}_X(-dA),\mtc{U})\longrightarrow\mtc{U}.$$ Then the tangent space to $\Sigma$ at a point $t$ corresponding to the previous short exact sequence is $\Hom(\mtc{K}_t,\mtc{U}_t)$, and $\Sigma$ is smooth at $t$ if $\Ext^1(\mtc{K}_t,\mtc{U}_t)=0$. Applying $\Hom(\cdot,\mtc{U}_t)$ to the exact sequence, one gets $$\Hom(\mtc{K}_t,\mtc{U}_t)\longrightarrow\Ext^1(\mtc{U}_t,\mtc{U}_t)\longrightarrow\Ext^1(\mtc{O}_X(-dA)^{\chi},\mtc{U}_t)\longrightarrow\Ext^1(\mtc{K}_t,\mtc{U}_t)\longrightarrow\Ext^2(\mtc{U}_t,\mtc{U}_t).$$ By passing to the open subset parameterizing locally free sheaves if necessary, we have
$$\ext^2(\mtc{U}_t,\mtc{U}_t)=\hom(\mtc{U}_t,\mtc{U}_t(K_X))=0$$ by our assumptions on the slopes. Since $d\gg0$, we have $\Ext^1(\mtc{O}_X(-dA)^{\chi},\mtc{U}_t)=0$ by Serre vanishing and boundedness of the Quot scheme. Therefore $\Ext^1(\mtc{K}_t,\mtc{U}_t)=0$ and $\Sigma$ is smooth at $t$, including at $t=t_0$. Furthermore, the Kodaira-Spencer map at $t$ is the natural map
$$T_t\Sigma=\Hom(\mtc{K}_t,\mtc{U}_t)\longrightarrow\Ext^1(\mtc{U}_t,\mtc{U}_t),$$
so the universal family on $\Sigma$ is complete at $t$, including at $t=t_0$. We have thus constructed the required complete family $\mtc{U}_t/\Sigma$.

Let $Q_i$ be the $A$-Hilbert polynomial corresponding to ${\bf{w}}_i$. Then by the same computation as in the previous lemma, the Schatz stratum $S_A(Q_1,...,Q_k)\subset\Sigma$ is smooth at $t_0$ of codimension $0$. Therefore the stratum is dense in $\Sigma$, and the general sheaf $\mtc{U}_t$ has an $A$-Harder-Narasimhan filtration with quotients of character ${\bf{w}}_i$. Thus ${\bf{gr}}_i={\bf{w}}_i$ and $k=l$.

\end{proof}

\subsection{Classification of stable characters}

In this section, we will give an equivalent condition for the existence of $\mu_A$-stable bundles of character ${\bf{v}}$ for some polarization $A=H-\sum \varepsilon_iE_i$.

\begin{defn}
A torsion-free coherent sheaf $\mtc{E}$ (or Chern character) satisfies the \textup{strong Dr\'{e}zet-Le Potier condition} (abbr. as \textup{strong DL condition}) if 
\begin{enumerate}[(a)]
    \item for every $\mu_A$-stable sheaf $\mtc{F}$ satisfying $r(\mtc{F})<r(\mtc{E})$ and $$\mu_A(\mtc{E})\leq\mu_A(\mtc{F})\leq \mu_A(\mtc{E})-A.K_X,$$ we have $\chi(\mtc{F},\mtc{E})\leq 0$;
    \item for every $\mu_A$-stable sheaf $\mtc{F}$ satisfying $r(\mtc{F})<r(\mtc{E})$ and $$\mu_A(\mtc{E})+A.K_X\leq\mu_A(\mtc{F})\leq \mu_A(\mtc{E}),$$ we have $\chi(\mtc{E},\mtc{F})\leq 0$.
\end{enumerate}

\end{defn}

\begin{lemma}
Suppose $\mtc{E}$ is a non-exceptional $\mu_A$-stable sheaf with $r(\mtc{E})\geq 2$. Then $\Delta(\mtc{E})\geq 1/2$ and $\mtc{E}$ satisfies strong DL condition.
\end{lemma}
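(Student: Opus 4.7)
The plan is to derive both claims from a single mechanism: $\mu_A$-stability together with Serre duality forces certain $\Hom$- and $\Ext^2$-groups to vanish, after which Riemann--Roch does the rest. The only ``arithmetic'' input one needs is the inequality $K_X \cdot A = -3 + \sum \varepsilon_i < 0$, which holds for any polarization $A = H - \sum \varepsilon_i E_i$ with $0 < \varepsilon_i$ and $\sum \varepsilon_i < 1$.

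For the discriminant bound, I would evaluate $\chi(\mtc{E},\mtc{E})$ in two ways. On the one hand, $\mu_A$-stability gives $\hom(\mtc{E},\mtc{E}) = 1$, and Serre duality gives $\ext^2(\mtc{E},\mtc{E}) = \hom(\mtc{E},\mtc{E}(K_X))$; the target of the latter is $\mu_A$-stable of the same rank as $\mtc{E}$ but with strictly smaller $A$-slope (since $K_X \cdot A < 0$), so the $\Hom$ vanishes. Hence $\chi(\mtc{E},\mtc{E}) = 1 - \ext^1(\mtc{E},\mtc{E})$, and the non-exceptional hypothesis forces $\ext^1(\mtc{E},\mtc{E}) \geq 1$, so $\chi(\mtc{E},\mtc{E}) \leq 0$. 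On the other hand, Riemann--Roch gives $\chi(\mtc{E},\mtc{E}) = r(\mtc{E})^2(1 - 2\Delta(\mtc{E}))$, so $\Delta(\mtc{E}) \geq 1/2$.

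For part (a) of the strong DL condition, let $\mtc{F}$ be $\mu_A$-stable with $r(\mtc{F}) < r(\mtc{E})$ and $\mu_A(\mtc{E}) \leq \mu_A(\mtc{F}) \leq \mu_A(\mtc{E}) - A \cdot K_X$. I would show $\hom(\mtc{F},\mtc{E}) = 0$ as follows: a nonzero map between $\mu_A$-stable sheaves with $\mu_A(\mtc{F}) \geq \mu_A(\mtc{E})$ would produce an image $\mtc{I} \subseteq \mtc{E}$ of rank $\leq r(\mtc{F}) < r(\mtc{E})$ and $\mu_A(\mtc{I}) \geq \mu_A(\mtc{F}) \geq \mu_A(\mtc{E})$, contradicting $\mu_A$-stability of $\mtc{E}$. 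For $\ext^2(\mtc{F},\mtc{E}) = \hom(\mtc{E},\mtc{F}(K_X))$, note that $\mu_A(\mtc{F}(K_X)) = \mu_A(\mtc{F}) + A \cdot K_X \leq \mu_A(\mtc{E})$, and the symmetric argument (a nonzero map $\mtc{E} \to \mtc{F}(K_X)$ would have an image that is a quotient of $\mtc{E}$ of rank $\leq r(\mtc{F}) < r(\mtc{E})$ with slope $\leq \mu_A(\mtc{F}(K_X)) \leq \mu_A(\mtc{E})$) contradicts $\mu_A$-stability of $\mtc{E}$. Thus $\chi(\mtc{F},\mtc{E}) = -\ext^1(\mtc{F},\mtc{E}) \leq 0$. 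Part (b) follows by the dual argument: under $\mu_A(\mtc{E}) + A \cdot K_X \leq \mu_A(\mtc{F}) \leq \mu_A(\mtc{E})$ one shows $\hom(\mtc{E},\mtc{F}) = 0$ directly and $\ext^2(\mtc{E},\mtc{F}) = \hom(\mtc{F},\mtc{E}(K_X)) = 0$ using $\mu_A(\mtc{E}(K_X)) \leq \mu_A(\mtc{F})$.

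The only subtle point, and the step I expect to require some care, is the boundary case where the slope inequalities are equalities; this is exactly where the strict rank inequality $r(\mtc{F}) < r(\mtc{E})$ is essential, since otherwise stability alone gives no contradiction for equal-slope sheaves. Once that case is handled, the rest is just writing down Riemann--Roch.
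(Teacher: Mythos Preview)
Your proposal is correct and follows essentially the same route as the paper's proof: both use $\mu_A$-stability and Serre duality (with $K_X\cdot A<0$) to kill $\hom$ and $\ext^2$, then read off the inequalities from Riemann--Roch. Your write-up is in fact more careful than the paper's, which simply asserts the vanishings ``by stability'' without spelling out the equal-slope boundary case where the hypothesis $r(\mtc{F})<r(\mtc{E})$ is doing the work.
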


\begin{proof}

We have that $\ext^2(\mtc{E},\mtc{E})=0$ and $\hom(\mtc{E},\mtc{E})=1$ by stability. As $\mtc{E}$ is not exceptional, then $$r(\mtc{E})^2(1-2\Delta(\mtc{E}))=\chi(\mtc{E},\mtc{E})=1-\ext^1(\mtc{F},\mtc{F})\leq 0,$$ which gives $\Delta(\mtc{E})\geq 1/2$. Now if $\mtc{F}$ is a $\mu_A$-stable bundle such that $r(\mtc{F})<r(\mtc{E})$ and $$\mu_A(\mtc{E})\leq\mu_A(\mtc{F})\leq \mu_A(\mtc{E})-A.K_X,$$ then we have $\hom(\mtc{F},\mtc{E})=0$ and $\ext^2(\mtc{F},\mtc{E})=\hom(\mtc{E},\mtc{F}(K_X))=0$ by stability; which implies that $\chi(\mtc{F},\mtc{E})\leq0$. The other condition is similar.

\end{proof}

\begin{defn}\label{8}
A torsion-free coherent sheaf $\mtc{E}$ (or Chern character) satisfies the \textup{weak Dr\'{e}zet-Le Potier condition} (abbr. as \textup{weak DL condition}) if 
\begin{enumerate}[(a)]
    \item for every exceptional bundle $\mtc{F}$ which is constructible and satisfies $r(\mtc{F})<r(\mtc{E})$ and $$\mu_A(\mtc{E})\leq\mu_A(\mtc{F})\leq \mu_A(\mtc{E})-A.K_X,$$ for some polarization $A$, we have $\chi(\mtc{F},\mtc{E})\leq 0$;
    \item for every exceptional bundle $\mtc{F}$ which is constructible and satisfies $r(\mtc{F})<r(\mtc{E})$ and $$\mu_A(\mtc{E})\leq\mu_A(\mtc{F})\leq \mu_A(\mtc{E})-A.K_X,$$ for some polarization $A$, we have $\chi(\mtc{E},\mtc{F})\leq 0$.
\end{enumerate}

\end{defn}

\begin{remark}
\textup{If $\mtc{E}$ is a $\mu_A$-stable sheaf for some polarization $A$, then $\mtc{E}$ satisfies both strong and weak DL conditions thanks to the stability condition and Serre duality.}
\end{remark}

Let ${\bf{v}}=(r,\nu,\Delta)$ be a character such that $\nu({\bf{v}})=\alpha H-\sum \beta_iE_i$ with $-1\leq \beta_i\leq 0$ and $\Delta({\bf{v}})\geq0$.  Suppose that $\alpha\notin \mathfrak{C}$, where $\mathfrak{C}$ is the set of $H$-slopes on $\mb{P}^2$ of exceptional bundles.

\begin{prop}
Let ${\bf{v}}=(r,\nu,\Delta)$ be as above, and $\mtc{E}\in\mtc{P}_H({\bf{v}})$ be a general sheaf. If $\mtc{E}$ satisfies the weak DL-condition, then $\pi_{*}\mtc{E}$ is $\mu_H$-stable.
\end{prop}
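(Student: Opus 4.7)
The plan is to argue by contradiction: assume $\pi_*\mtc{E}$ is not $\mu_H$-stable, produce a destabilizing $\mu_H$-stable subquotient, and then use adjunction to transport a Drézet--Le Potier exceptional bundle on $\mb{P}^2$ up to $X$ where it will contradict the weak DL hypothesis.

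First I would establish the basic properties of $\pi_*\mtc{E}$. Since $\mtc{E}$ is a general sheaf in $\mtc{P}_H({\bf{v}})$, Walter's balanced-restriction corollary forces $\mtc{E}|_{E_i}$ to be balanced, and because $-1\leq\beta_i\leq 0$ this gives $\mtc{E}|_{E_i}\simeq\mtc{O}_{E_i}^{r-d_i}\oplus\mtc{O}_{E_i}(-1)^{d_i}$ with $d_i=-r\beta_i\in[0,r]$. Walter's pushforward lemma, applied inductively across $E_1,\dots,E_m$, then shows $\pi_*\mtc{E}$ is locally free of rank $r$ with $R^i\pi_*\mtc{E}=0$ for $i>0$, and a Chern class computation (equivalently, pushing down the resolution of Proposition \ref{2}) yields $\mu_H(\pi_*\mtc{E})=\alpha$. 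Combined with $R\pi_*\mtc{O}_X=\mtc{O}_{\mb{P}^2}$, adjunction gives $\Ext^i_X(\pi^*\mtc{F},\mtc{E})\simeq\Ext^i_{\mb{P}^2}(\mtc{F},\pi_*\mtc{E})$ for every coherent $\mtc{F}$ on $\mb{P}^2$.

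Now suppose $\pi_*\mtc{E}$ is not $\mu_H$-stable. Passing to a Jordan--H\"{o}lder factor of the maximal destabilizing Harder--Narasimhan piece, I would extract a $\mu_H$-stable sheaf $\mtc{G}$ of rank $<r$ appearing in the HN/JH filtration of $\pi_*\mtc{E}$, with $\mu_H(\mtc{G})\geq\alpha$. Because $\alpha\notin\mathfrak{C}$, Dr\'{e}zet--Le Potier's classification on $\mb{P}^2$ furnishes an exceptional bundle $\mtc{F}_0$ on $\mb{P}^2$ with rank $<r$ and with $\mu_H(\mtc{F}_0)$ lying in the appropriate control interval about $\alpha$, such that $(\mtc{F}_0,\mtc{G})$ violates the expected sign of the Euler characteristic. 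The pullback $\pi^*\mtc{F}_0$ is exceptional on $X$ by the projection formula $\Ext^i_X(\pi^*\mtc{F}_0,\pi^*\mtc{F}_0)\simeq\Ext^i_{\mb{P}^2}(\mtc{F}_0,\mtc{F}_0)$, and it is constructible in the sense of Section 2 because the sub-helix $(\mtc{O}_X(-2H),\mtc{O}_X(-H),\mtc{O}_X)$ of the standard helix on $X$ is $\pi^*$ of the standard exceptional collection on $\mb{P}^2$, so mutations among exceptional bundles on $\mb{P}^2$ lift to mutations within the standard helix on $X$, and every exceptional bundle on $\mb{P}^2$ is constructible by \cite{KO95}. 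Since $\pi^*\mtc{F}_0$ has trivial $E_i$-components, $\mu_A(\pi^*\mtc{F}_0)=\mu_H(\mtc{F}_0)$ while $\mu_A(\mtc{E})=\alpha-\sum\beta_i\varepsilon_i$ differs from $\alpha$ by $O(\varepsilon)$, so for $\varepsilon_i$ sufficiently small the slope inequalities in Definition \ref{8} apply to $(\pi^*\mtc{F}_0,\mtc{E})$. The weak DL hypothesis then forces $\chi_X(\pi^*\mtc{F}_0,\mtc{E})\leq 0$, and adjunction rewrites this as $\chi_{\mb{P}^2}(\mtc{F}_0,\pi_*\mtc{E})\leq 0$. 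Using stability of $\mtc{F}_0$ to eliminate $\Hom$ and $\Ext^2$ contributions from the other HN/JH factors of $\pi_*\mtc{E}$ then yields $\chi_{\mb{P}^2}(\mtc{F}_0,\mtc{G})\leq 0$, contradicting the Dr\'{e}zet--Le Potier inequality.

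The main obstacle I expect is the slope bookkeeping, combined with a case split on whether $\mu_H(\mtc{G})=\alpha$ or $\mu_H(\mtc{G})>\alpha$ and on whether the DL exceptional $\mtc{F}_0$ sits above or below $\alpha$: each configuration has to be matched with the correct half (a) or (b) of the weak DL condition, and one must check the DL control interval on $\mb{P}^2$ is respected by the perturbation $\mu_A(\mtc{E})-\alpha=-\sum\beta_i\varepsilon_i$. A secondary concern is the strict verification that $\pi^*\mtc{F}_0$ is constructible on $X$ in the mutational sense of Section 2, which I sketched via the embedded sub-helix but which may require writing out explicit mutation sequences past the exceptional divisors $\mtc{O}_X(-E_i)$.
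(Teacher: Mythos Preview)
Your adjunction computation $\chi_X(\pi^{*}\mtc{F}_0,\mtc{E})=\chi_{\mb{P}^2}(\mtc{F}_0,\pi_{*}\mtc{E})$ is fine, and the observation that $\pi^{*}\mtc{F}_0$ is constructible on $X$ is correct. But the contradiction you attempt to extract does not go through. There are two distinct problems. First, Dr\'{e}zet--Le Potier furnishes an exceptional $\mtc{F}_0$ violating the DL inequality only when the \emph{numerical invariants} $(r,\alpha,\Delta(\pi_{*}\mtc{E}))$ lie below the DLP curve; it says nothing about whether the specific sheaf $\pi_{*}\mtc{E}$ is stable. The invariants of $\pi_{*}\mtc{E}$ may well satisfy DLP, so that stable sheaves with those invariants exist, while $\pi_{*}\mtc{E}$ itself happens not to be one of them. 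In that situation no $\mtc{F}_0$ with the property you want is produced. Second, even if you find some $\mtc{F}_0$ with $\chi(\mtc{F}_0,\mtc{G})>0$, your ``elimination of the other HN/JH factors'' runs the wrong way: from $\chi(\mtc{F}_0,\pi_{*}\mtc{E})\leq 0$ together with $\chi(\mtc{F}_0,\gr_i)\leq 0$ for the other factors you only get $\chi(\mtc{F}_0,\mtc{G})\geq \chi(\mtc{F}_0,\pi_{*}\mtc{E})$, which is no constraint on the sign of $\chi(\mtc{F}_0,\mtc{G})$.

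The missing ingredient is precisely what the paper supplies: you must first show that $\pi_{*}\mtc{E}$ is a \emph{general} member of $\mtc{P}_H(\pi_{*}{\bf{v}})$ on $\mb{P}^2$. The paper does this by checking that the tangent map $\Ext^1(\mtc{E},\mtc{E})\to\Ext^1(\pi_{*}\mtc{E},\pi_{*}\mtc{E})$ is surjective (the cokernel lives in $\bigoplus_i\Ext^2(\mtc{O}_{E_i}(-1)^{d_i},\mtc{E})$, which vanishes because $\mtc{E}|_{E_i}$ is balanced). Once $\pi_{*}\mtc{E}$ is general, the DL inequalities you derived on $\mb{P}^2$ are exactly the Dr\'{e}zet--Le Potier criterion for the general prioritary sheaf to be Gieseker stable, and then $\mu_H$-stability follows for a general stable sheaf. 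You should replace the destabilizer-and-contradiction plan with this genericity argument. A minor additional point: for the half of the DL condition with $\mu_H(\mtc{V})<\alpha$ the paper does not use $\pi^{*}\mtc{V}$ but the constructible exceptional $\mtc{F}$ fitting in $0\to\pi^{*}\mtc{V}\to\mtc{F}\to\bigoplus_i\mtc{O}_{E_i}(-1)^{r(\mtc{V})}\to 0$, so that the identification with $\chi(\pi_{*}\mtc{E},\mtc{V})$ and the $A$-slope bookkeeping work out; your proposal handles only the $\pi^{*}\mtc{V}$ direction.
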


\begin{proof}
Since $\mtc{E}$ is general, then one has $\mtc{E}|_{E_i}\simeq\mtc{O}_{E_i}^{r-d_i}\oplus \mtc{O}_{E_i}(-1)^{d_i}$ for every $i=1,...,m$. In particular, $\pi_{*}\mtc{E}$ is a vector bundle on $\mb{P}^2$ which fits in the short exact sequence $$0\longrightarrow\pi^{*}\pi_{*}\mtc{E}\longrightarrow\mtc{E}\longrightarrow\bigoplus_{i=1}^m\mtc{O}_{E_i}(-1)^{d_i}\longrightarrow0.$$ Applying $\Hom(\cdot,\mtc{E})$ to this sequence, one gets that $$\Ext^1(\mtc{E},\mtc{E})\longrightarrow \Ext^1(\pi^{*}\pi_{*}\mtc{E},\mtc{E})\longrightarrow \bigoplus_{i=1}^m\Ext^2(\mtc{O}_{E_i}(-1)^{d_i},\mtc{E})=0.$$ Using the argument in the proof of Proposition 8.8, one can deduce that $\Ext^1(\pi^{*}\pi_{*}\mtc{E},\mtc{E})\simeq \Ext^1(\pi_{*}\mtc{E},\pi_{*}\mtc{E})$, and hence one may assume that $\pi_{*}\mtc{E}$ is a general sheaf in $\mtc{P}_H({\bf{v}'}(\pi_{*}\mtc{E}))$.

We need to show that $\pi_{*}\mtc{E}$ satisfies the DL-condition on $\mb{P}^2$. Let $\mtc{V}$ be an exceptional bundle on $\mb{P}^2$ of such that $r(\mtc{V})<r$ and $0< \mu_H(\mtc{V})-\mu_H(\pi_{*}\mtc{E})\leq 2$. We claim that $$\chi(\mtc{V},\pi_{*}\mtc{E})\leq 0.$$ Consider the exceptional bundle $\mtc{F}=\pi^{*}\mtc{V}$ on $X$. We have for each $i\geq0$ that $$\Ext^i(\pi^{*}\mtc{V},\mtc{E})\simeq H^i(X,\pi^{*}\mtc{V}^{*}\otimes\mtc{E})\simeq H^i(\mb{P}^2,\mtc{V}^{*}\otimes \pi_{*}\mtc{E})\simeq \Ext^i(\mtc{V},\pi_{*}\mtc{E})$$ since $R^j\pi_{*}\mtc{E}=0$ for $j>0$. In particular, one has $$\chi(\mtc{V},\pi_{*}\mtc{E})=\chi(\pi^{*}\mtc{V},\mtc{E})\leq 0$$ since $\rk(\mtc{F})<r$ and $\mu_A(\mtc{E})\leq \mu_A(\mtc{F})\leq \mu_A(\mtc{E})-(K_X.A)$.

Similarly, for any exceptional bundle $\mtc{V}$ on $\mb{P}^2$ of such that $r(\mtc{V})<r$ and $0< \mu_H(\mtc{E})-\mu_H(\mtc{V})\leq 2$, we have that $$\chi(\pi_{*}\mtc{E},\mtc{V})\leq 0$$ by considering the exceptional bundle $\mtc{F}$ on $X$ given by $$0\longrightarrow\pi^{*}\mtc{V}\longrightarrow\mtc{F}\longrightarrow\bigoplus_{i=1}^m\mtc{O}_{E_i}(-1)^{r(\mtc{V})}\simeq\bigoplus_{i=1}^m\Ext^1(\mtc{O}_{E_i}(-1),\pi^{*}\mtc{V})\otimes\mtc{O}_{E_i}(-1)\longrightarrow0.$$ We can conclude that $\pi_{*}\mtc{E}$ is $H$-stable. Since $\pi_{*}\mtc{E}$ is general, then it is $\mu_H$-stable.

\end{proof}

\begin{theorem}
Let ${\bf{v}}=(r,\nu,\Delta)$ be as above, and $\mtc{E}\in\mtc{P}_H({\bf{v}})$ be a general sheaf. If $\mtc{E}$ satisfies the weak DL-condition, then $\mtc{E}$ is $\mu_A$-stable for $0<\varepsilon_i\ll1$, where $A=H-\sum \varepsilon_iE_i$.
\end{theorem}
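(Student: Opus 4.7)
The plan is to bootstrap the $\mu_A$-stability of $\mtc{E}$ from the $\mu_H$-stability of $\pi_*\mtc{E}$ supplied by the preceding proposition. Since $\mtc{E}$ is a general sheaf in $\mtc{P}_H({\bf{v}})$ satisfying the weak DL-condition, that result already gives us that $\pi_*\mtc{E}$ is $\mu_H$-stable on $\mb{P}^2$. I argue by contradiction: suppose $\mtc{E}$ admits a saturated $\mu_A$-destabilizing subsheaf $\mtc{F}\subseteq\mtc{E}$ with $1\leq r(\mtc{F})<r$ and $\mu_A(\mtc{F})\geq\mu_A(\mtc{E})$.

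The next step is to push forward to $\mb{P}^2$. Since $\pi_*$ is left exact and preserves torsion-freeness, we obtain an inclusion $\pi_*\mtc{F}\hookrightarrow \pi_*\mtc{E}$, and $\pi_*\mtc{F}$ has rank $r(\mtc{F})$ because $\pi$ is birational. Writing $c_1(\mtc{F})=a'H-\sum b_i'E_i$, the identities $\pi_*E_i=0$ together with the fact that $R^1\pi_*\mtc{F}$ is supported at the blown-up points (and hence contributes nothing to $c_1$ on $\mb{P}^2$) yield $c_1(\pi_*\mtc{F})=a'H$ via Grothendieck--Riemann--Roch. Consequently $\mu_H(\pi_*\mtc{F})=\mu_H(\mtc{F})$ and likewise $\mu_H(\pi_*\mtc{E})=\mu_H(\mtc{E})=\alpha$. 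Stability of $\pi_*\mtc{E}$ then forces $\mu_H(\mtc{F})<\mu_H(\mtc{E})$ strictly, and since $a$ and $a'$ are integers, the gap is discretized:
\[\mu_H(\mtc{E})-\mu_H(\mtc{F})\geq \frac{1}{r\cdot r(\mtc{F})}\geq \frac{1}{r^2}.\]

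The final step is an $\varepsilon$-estimate. Writing out the slopes with respect to $A=H-\sum\varepsilon_iE_i$ gives
\[\mu_A(\mtc{F})-\mu_A(\mtc{E})=\bigl(\mu_H(\mtc{F})-\mu_H(\mtc{E})\bigr)+\sum_{i=1}^m\varepsilon_i\Bigl(\beta_i-\frac{b_i'}{r(\mtc{F})}\Bigr),\]
and I want to conclude that the right-hand side is strictly negative for small $\varepsilon_i$, contradicting the destabilizing inequality. The main obstacle is uniformity: the $\varepsilon_i$ must be chosen before $\mtc{F}$ is known, so I need the ratios $b_i'/r(\mtc{F})$ to be bounded independently of $\mtc{F}$. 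This is where Grothendieck's lemma enters: saturated subsheaves of the fixed torsion-free sheaf $\mtc{E}$ with $\mu_A$-slope bounded below form a bounded family, so only finitely many Chern characters occur among potential destabilizers. This yields a uniform bound $|b_i'|/r(\mtc{F})\leq M(\mtc{E})$, and any choice of $\varepsilon_i$ with $\sum_i\varepsilon_i(M+1)<1/r^2$ makes the displayed right-hand side strictly negative, closing the contradiction.
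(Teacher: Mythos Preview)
Your overall strategy matches the paper's: push forward to $\mb{P}^2$, use the $\mu_H$-stability of $\pi_*\mtc{E}$ to force a strict gap $\mu_H(\mtc{E})-\mu_H(\mtc{F})>0$ discretized by the ranks, and then absorb the $E_i$-contributions for small $\varepsilon_i$. The pushforward computation and the $1/r^2$ gap are fine.

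The divergence is in the uniformity step, and there your argument has a genuine gap. Grothendieck's lemma bounds the family of saturated subsheaves with $\mu_A$-slope bounded below \emph{for a fixed polarization $A$}. But you want to choose the $\varepsilon_i$ only after you have the bound $M$, while $M$ itself comes from Grothendieck applied to that same $A$: this is circular. You would need to show that potential destabilizers for \emph{all} sufficiently small $\varepsilon_i$ lie in a single bounded family determined in advance, and that is not what Grothendieck's lemma gives you directly.

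The paper sidesteps this entirely by using a structural fact specific to the general $\mtc{E}\in\mtc{P}_H({\bf v})$: such $\mtc{E}$ is also $F_i$-prioritary for every $i$, hence balanced along each fiber $F_i=H-E_i$. By Lemma~5.1 this yields, for \emph{every} subsheaf $\mtc{E}'$ and \emph{every} polarization $A=H-\sum\varepsilon_iE_i$, the inequality $\mu_A(\mtc{E}')-\mu_A(\mtc{E})\leq 1$. Letting $\varepsilon_i\to 1$ and the others $\to 0$ gives the explicit, polarization-free bound
\[
\beta_i-\beta_i'\ \leq\ (\alpha-\alpha')+1
\]
for every subsheaf, with no appeal to boundedness. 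Plugging this into your displayed identity gives
\[
\mu_A(\mtc{E})-\mu_A(\mtc{E}')\ \geq\ (1-\textstyle\sum\varepsilon_i)(\alpha-\alpha')-\textstyle\sum\varepsilon_i,
\]
which is positive once $\sum\varepsilon_i/(1-\sum\varepsilon_i)$ is smaller than the $H$-slope gap $\delta$. So the paper replaces your Grothendieck-type boundedness by the balancedness of the general prioritary sheaf on the fibers; this is what makes the $\varepsilon$-choice non-circular. Your argument becomes correct if you substitute that bound for the Grothendieck step.
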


\begin{proof}
Recall that for any subsheaf $\mtc{E}'$ of $\mtc{E}$ and any polarization $A=H-\sum \varepsilon_iE_i$, one has $$\mu_A(\mtc{E})-\mu_A(\mtc{E}')\leq 1.$$ Write $\nu(\mtc{E})=\alpha H-\sum\beta_iE_i$ and $\nu(\mtc{E})=\alpha' H-\sum\beta_i'E_i$, then the inequality comes down to saying that $$\alpha'-\alpha-\sum \varepsilon_i(\beta_i'-\beta_i)\leq 1.$$ Setting $\varphi_i\to 1$ and $\varphi_j\to 0$, one obtains that $$\beta_i-\beta_i'\leq \alpha-\alpha'+1.$$ 

Notice that for any subsheaf $\mtc{E}'$ of $\mtc{E}$, since $\pi_{*}\mtc{E}$ is $\mu_H$-stable, then $\alpha'<\alpha$. Thus $\alpha-\alpha'$ has a strictly positive lower bound $\delta$ since $r$ is a finite number. Now let $\sum\varepsilon_i$ satisfy that $$\delta>\frac{\sum\varepsilon_i}{1-\sum\varepsilon_i}.$$ Then we claim that $\mtc{E}$ is $\mu_A$-stable for $A=H-\sum\varepsilon_iE_i$. Indeed, for any subsheaf $\mtc{E}'$ of $\mtc{E}$, one has

\begin{equation}\nonumber
    \begin{split}
        \mu_A(\mtc{E})-\mu_A(\mtc{E}')&=\alpha-\alpha'-\sum_i\varepsilon_i(\beta_i-\beta_i')\\
        &\geq (\alpha-\alpha')-\sum \varepsilon_i(\alpha-\alpha'+1)\\
        &=(1-\sum\varepsilon_i)(\alpha-\alpha')-\sum\varepsilon_i\\
        &\geq (1-\sum\varepsilon_i)\delta-\sum\varepsilon_i>0.
    \end{split}
\end{equation}

\end{proof}

Now let us consider the case when $\mu_H(\pi_{*}\mtc{E})=\mu_H(\mtc{V})$ for some exceptional bundle $\mtc{V}$. Consider the Harder-Narasimhan filtration of $\pi_{*}\mtc{E}$. Either none of the Harder-Narasimhan factors is isomorphic to $\mtc{V}$, or every Harder-Narasimhan factor is isomorphic to $\mtc{V}$. In the former case, one can run the same argument as above to show that $\mtc{E}$ is $\mu_A$-stable for $0<\varepsilon_i\ll1$ if $\mtc{E}$ satisfies the weak DL-condition. 

For the latter case, a general element in $\mtc{P}_H({\bf{v}})$ fits in an exact sequence $$0\longrightarrow\pi^{*}\mtc{V}^{\oplus N}\longrightarrow\mtc{E}\longrightarrow \bigoplus_{i=1}^m\mtc{O}_{E_i}(-1)^{d_i}\longrightarrow0.$$ If $\mtc{E}$ is not $\mu_A$-semistable, then every destabilizing sheaf $\mtc{E}'$ is of the form $$0\longrightarrow\pi^{*}\mtc{V}^{\oplus N'}\longrightarrow\mtc{E}'\longrightarrow \bigoplus_{i=1}^m\mtc{O}_{E_i}(-1)^{d_i'}\longrightarrow0,$$ where $N>N'$, $d_i'\geq d_i$ for every $i$, and for at least one $i$, $d_i'>d_i$. 

Now let ${\bf{v}}={\bf{v}}(\mtc{E})$ be a character with $\mtc{E}$ a general sheaf fitting in an exact sequence $$0\longrightarrow\pi^{*}\mtc{V}^{\oplus N}\longrightarrow\mtc{E}\longrightarrow \bigoplus_{i=1}^m\mtc{O}_{E_i}(-1)^{d_i}\longrightarrow0.$$ It follows from Theorem 8.3 that $\mtc{E}$ is $\mu_A$-stable if and only if there exists an integer $K\geq 2$ and
\begin{enumerate}
    \item $N_1,...,N_k\geq1$ such that $\sum N_i=N$,
    \item $d_i^j\geq0$ for $i=1,...,m$ and $j=1,...,k$ such that $\sum_jd_i^j=d_i$,  $d_i^j\geq d_i^{j+1}$, and for every $j$ at least one strict inequality $d_i^{j}>d_i^{j+1}$ holds,
\end{enumerate}
satisfying the following conditions
\begin{enumerate}[a)]
    \item $M_A({\bf{v}_i})$ is non-empty for any $i$, where ${\bf{v}_i}$ is the character of a sheaf given by an extension $$0\longrightarrow\pi^{*}\mtc{V}^{\oplus N_i}\longrightarrow\mtc{E}_i\longrightarrow \bigoplus_{j=1}^m\mtc{O}_{E_j}(-1)^{d_j^{i}}\longrightarrow0,$$ and
    \item $\chi({\bf{v}_i},{\bf{v}_j})=0$ for any $i<j$.
\end{enumerate}
The condition b) comes down to saying that $$\frac{1}{r_0^2}=\sum_{i=1}^m \frac{d_i^j}{N_jr_0}-\left(\frac{d_i^k}{N_kr_0}-\frac{d_i^j}{N_jr_0}\right)^2$$ for any $k<j$, where $r_0$ is the rank of $\mtc{V}$.

\section{Stable characters on general blow-ups of $\mb{P}^2$}

In this section, we will apply some results in deformation theory to illustrate that the weak Brill-Noether property and the non-emptiness of the moduli space of stable sheaves proven before actually hold on a blow-up of $\mb{P}^2$ along $m$ general points.

Let $X_0$ be a smooth projective surface, and let $\mtc{E}_0$ be a coherent sheaf on $X_0$. Let $X$ be a deformation of $X_0$ over a local Artin ring $C$. By a deformation of $\mtc{E}_0$ over $X$ we mean a coherent sheaf $\mtc{E}$ on $X$, flat over $C$, together with a map $\mtc{E}\longrightarrow\mtc{E}_0$ such that the induced map $$\mtc{E}\otimes_{\mtc{O}_X}\mtc{O}_{X_0}\longrightarrow \mtc{E}_0$$ is an isomorphism.
We know that if $C$ is the ring of dual numbers $D$, and $X=X_0\times_{\mb{C}}D$ is the trivial deformation of $X_0$, then such deformations $\mtc{E}$ always exist, and
they are classified by $\Ext^1_{X_0}(\mtc{E}_0,\mtc{E}_0)$. Now we consider the more general situation over a sequence $$0\longrightarrow J\longrightarrow C'\longrightarrow C\longrightarrow0,$$ where $C$ is a local Artin ring with the residue field $\mb{C}$, $C'$ is another local Artin ring mapping to $C$, and $J$ is an ideal with $\mathfrak{m}_{C'}J=0$ so that $J$ can be considered as a $\mb{C}$-vector space. Suppose we are given $X_0,\mtc{E}_0,X,\mtc{E}$ as above, and further suppose we are given an extension $X'$ of $X$ over $C'$. We ask for an extension $\mtc{E}'$ of $\mtc{E}$ over $C'$, that is, a coherent sheaf $\mtc{E}'$ on $X'$, flat over $C'$, together with a map $\mtc{E}'\longrightarrow\mtc{E}$ inducing
an isomorphism $\mtc{E}'\otimes_{C'}C\longrightarrow\mtc{E}$. We only need to treat the case of a vector bundle since a general sheaf in $\mtc{P}_H({\bf{v}})$ is locally free, in which case $\mtc{E}$ and $\mtc{E}'$ will also be locally free.

\begin{theorem}\textup{(\cite{Har10})}
In the situation as above, assume that $\mtc{E}_0$ is locally free. Then:
\begin{enumerate}[(1)]
    \item If an extension $\mtc{E}'$ of $\mtc{E}$ over $X'$ exists, then $\Aut(\mtc{E}'/\mtc{E})=J\otimes_{\mb{C}}\Hom_{X_0}(\mtc{E}_0,\mtc{E}_0)$.
    \item Given $\mtc{E}$, there is an obstruction in $J\otimes_{\mb{C}}\Ext^2_{X_0}(\mtc{E}_0,\mtc{E}_0)$ to the existence of $\mtc{E}'$.
    \item If an $\mtc{E}'$ exists, then the set of all such is a torsor under the action of $J\otimes_{\mb{C}}\Ext^1_{X_0}(\mtc{E}_0,\mtc{E}_0)$
\end{enumerate}
\end{theorem}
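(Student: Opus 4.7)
The plan is to reduce the statement to a Čech cohomology calculation with coefficients in $J \otimes_{\mb{C}} \mtc{End}(\mtc{E}_0)$, and then identify the relevant Čech groups with $\Hom$, $\Ext^1$, and $\Ext^2$ of $\mtc{E}_0$ on $X_0$. All three parts $(1)$, $(2)$, $(3)$ will fall out of the same local-to-global machinery, so I would treat them in a unified way.

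First, I would build local extensions. Choose an affine cover $\{U_i\}$ of $X_0$ trivializing $\mtc{E}_0$, and let $U'_i \subset X'$ be the corresponding affine thickenings. Over each $U'_i$, the sheaf $\mtc{E}|_{U_i \cap X}$ is (after restriction to a possibly smaller refinement) free, so it admits a trivial lift $\mtc{E}'_i$ to $U'_i$, because free modules lift uniquely along square-zero surjections of affine rings. These local lifts are unique up to non-canonical isomorphism. On double overlaps, any two lifts of $\mtc{E}|_{U_{ij}}$ differ by an automorphism restricting to the identity modulo $J$; such an automorphism has the form $\mathrm{id} + \varphi_{ij}$ with $\varphi_{ij} \in \Gamma(U_{ij}, J \otimes_{\mb{C}} \mtc{End}(\mtc{E}_0))$, giving a Čech $1$-cochain.

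Next, for part $(2)$, I would compute the obstruction to assembling the $\mtc{E}'_i$ into a global lift. On triple overlaps, the composition of gluing isomorphisms need not satisfy the cocycle identity; the defect
\[
\psi_{ijk} := \varphi_{jk} - \varphi_{ik} + \varphi_{ij}
\]
is a Čech $2$-cochain with values in $J \otimes \mtc{End}(\mtc{E}_0)$, and one checks it is a $2$-cocycle. A different choice of local lifts or gluings adjusts $(\varphi_{ij})$ by a $1$-coboundary, so the class of $\psi_{ijk}$ in $\check{H}^2(X_0, J \otimes \mtc{End}(\mtc{E}_0)) = J \otimes_{\mb{C}} \Ext^2_{X_0}(\mtc{E}_0, \mtc{E}_0)$ is well-defined and vanishes exactly when a global $\mtc{E}'$ exists.

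For part $(3)$, assuming the obstruction vanishes, fix one lift $\mtc{E}'$ and compare it to any other lift $\mtc{E}''$. Locally on $U'_i$ the two sheaves are isomorphic; the transition discrepancies between such local isomorphisms form a Čech $1$-cocycle with values in $J \otimes \mtc{End}(\mtc{E}_0)$, well-defined modulo coboundaries coming from automorphisms of local lifts. This identifies the set of lifts with a torsor under $\check{H}^1(X_0, J \otimes \mtc{End}(\mtc{E}_0)) = J \otimes_{\mb{C}} \Ext^1_{X_0}(\mtc{E}_0, \mtc{E}_0)$. Finally, for part $(1)$, an automorphism of $\mtc{E}'$ over $\mtc{E}$ is globally of the form $\mathrm{id} + \alpha$ with $\alpha \in \Gamma(X_0, J \otimes \mtc{End}(\mtc{E}_0)) = J \otimes_{\mb{C}} \Hom_{X_0}(\mtc{E}_0, \mtc{E}_0)$, and the assignment $\alpha \mapsto \mathrm{id} + \alpha$ is a bijection because $J$ is square-zero.

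The main obstacle is bookkeeping: one must verify that the Čech class $[\psi_{ijk}]$ is independent of all the choices (refinement of cover, local trivializations, local lifts, gluings) and that the torsor structure in $(3)$ is independent of the chosen base lift $\mtc{E}'$. These are routine but delicate, and are standard in deformation theory; the locally free hypothesis is used to avoid higher $\mtc{Tor}$ terms in the flatness analysis of $\mtc{E}' \otimes_{C'} C$, so that the above cocycle computations faithfully measure the deformation problem.
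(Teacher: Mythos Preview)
Your proposal is correct and is the standard \v{C}ech-cohomological argument for this result. Note, however, that the paper does not supply its own proof of this theorem: it is stated with a citation to \cite{Har10} (Hartshorne's \emph{Deformation Theory}) and used as a black box to deduce the subsequent corollary. The argument you sketch is essentially the one found in that reference, so there is nothing to compare against here beyond observing that your outline matches the classical treatment.
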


\begin{corollary}
Let $X_0$ be the blow-up of $m$ collinear points and $\mtc{E}_0$ a general member in a prioritary stack $\mtc{P}_H({\bf{v}})$, which is $\mu_A$-stable for some $A=H-\sum \varepsilon_iE_i$. Then
\begin{enumerate}[(1)]
    \item an extension $\mtc{E}'$ always exists, and 
    \item $\Aut(\mtc{E'}/\mtc{E})=J$.
\end{enumerate}
\end{corollary}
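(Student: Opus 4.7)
The plan is to apply Theorem 9.1 directly, reducing each claim to a vanishing of certain $\Ext$-groups and exploiting the $\mu_A$-stability hypothesis on $\mtc{E}_0$.

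For claim (1), by Theorem 9.1(2) the obstruction to lifting $\mtc{E}$ to $\mtc{E}'$ lies in $J \otimes_{\mb{C}} \Ext^2_{X_0}(\mtc{E}_0,\mtc{E}_0)$, so it suffices to show $\Ext^2_{X_0}(\mtc{E}_0,\mtc{E}_0)=0$. By Serre duality this group is dual to $\Hom_{X_0}(\mtc{E}_0,\mtc{E}_0\otimes K_{X_0})$. I would then compute
\[
K_{X_0}\cdot A \;=\; (-3H+\textstyle\sum E_i)\cdot(H-\sum \varepsilon_i E_i) \;=\; -3+\sum \varepsilon_i,
\]
and recall that ampleness of $A$ forces $\sum \varepsilon_i<1$, so $K_{X_0}\cdot A<-2<0$. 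Hence $\mu_A(\mtc{E}_0\otimes K_{X_0})=\mu_A(\mtc{E}_0)+K_{X_0}\cdot A<\mu_A(\mtc{E}_0)$. Since both $\mtc{E}_0$ and $\mtc{E}_0\otimes K_{X_0}$ are $\mu_A$-stable sheaves of the same rank, a nonzero map between them would contradict stability. Therefore $\Hom(\mtc{E}_0,\mtc{E}_0\otimes K_{X_0})=0$ and so $\Ext^2(\mtc{E}_0,\mtc{E}_0)=0$, which makes the obstruction automatically vanish for every $J$ and establishes existence of $\mtc{E}'$.

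For claim (2), Theorem 9.1(1) gives $\Aut(\mtc{E}'/\mtc{E})=J\otimes_{\mb{C}}\Hom_{X_0}(\mtc{E}_0,\mtc{E}_0)$, so the statement reduces to verifying that $\mtc{E}_0$ is simple. This is immediate from $\mu_A$-stability: any nonzero endomorphism of a $\mu_A$-stable sheaf is a scalar, hence $\Hom_{X_0}(\mtc{E}_0,\mtc{E}_0)=\mb{C}$, and therefore $\Aut(\mtc{E}'/\mtc{E})\simeq J$ as claimed.

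The argument is essentially a direct application of the deformation-theoretic machinery; there is no real obstacle beyond verifying the sign of $K_{X_0}\cdot A$. One subtle point worth highlighting is that the $\Ext^2$-vanishing used here is strictly stronger than the $H$-priority condition $\Ext^2(\mtc{E}_0,\mtc{E}_0(-H))=0$ built into $\mtc{P}_H({\bf v})$, so stability (rather than merely priority) is genuinely needed. Since a general sheaf in $\mtc{P}_H({\bf v})$ is locally free, the hypothesis in Theorem 9.1 that $\mtc{E}_0$ be locally free is satisfied, and both conclusions follow.
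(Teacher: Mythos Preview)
Your proof is correct and matches the paper's intended argument: the corollary is stated immediately after the deformation-theoretic theorem without an explicit proof, precisely because it follows by combining that theorem with the standard consequences of $\mu_A$-stability (simplicity gives $\Hom(\mtc{E}_0,\mtc{E}_0)=\mb{C}$, and Serre duality together with $K_{X_0}\cdot A<0$ gives $\Ext^2(\mtc{E}_0,\mtc{E}_0)=0$). Your computation of $K_{X_0}\cdot A=-3+\sum\varepsilon_i$ and the observation that local freeness holds for a general member of $\mtc{P}_H({\bf v})$ are exactly the points needed to make this precise.
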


Now let $\mtc{H}$ be the Hilbert scheme of $m$ points on $\mb{P}^2$, and $\mtc{U}\subseteq \mtc{H}\times \mb{P}^2$ the universal family. Let $\mtc{X}\rightarrow\mtc{H}$ the blow-up of $\mtc{H}\times\mb{P}^2$ along $\mtc{U}$. This is the family parameterizing blow-ups of $\mb{P}^2$ along $m$ points. Let $h_0\in \mtc{H}$ be the point corresponding to a blow-up of $\mb{P}^2$ along $m$ distinct collinear points. Let $\mtc{H}'$ be the open subscheme of $\mtc{H}$ parameterizing distinct points of $\mb{P}^2$. Then for $0<\varepsilon_i\ll1$, the divisor $A=H-\sum \varepsilon_iE_i$ is ample on the surface $\mtc{X}_h$ for any $h\in\mtc{H}'$. Given a $\mu_A$-stable sheaf $\mtc{E}_{h_0}$ on $\mtc{X}_{h_0}$, by Corollary 9.2, one can always deform $\mtc{E}_{h_0}$ to the nearby surfaces, which is still $\mu_A$-stable. Moreover, if $\mtc{E}_{h_0}$ satisfies the weak Brill-Noether property, then so do its deformations. This gives us the following:

\begin{theorem}\label{6}
Let ${\bf{v}}=(r,\nu,\Delta)$ be a character such that $\nu({\bf{v}})=\alpha H-\sum \beta_iE_i$ with $-1\leq \beta_i\leq 0$ and $\Delta({\bf{v}})\geq0$. Suppose that $\alpha\notin \mathfrak{C}$, where $\mathfrak{C}$ is the set of $H$-slopes on $\mb{P}^2$ of exceptional bundles. If ${\bf{v}}$ satisfies the weak DL-condition, then $M_{\mtc{X}_h,A}({\bf{v}})\neq \emptyset$ for general $0<\varepsilon_i\ll1$ and general $h\in \mtc{H}'$, where $A=H-\sum \varepsilon_iE_i$.
\end{theorem}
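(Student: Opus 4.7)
The strategy is a specialization argument: construct a $\mu_A$-stable sheaf on the special fiber $\mtc{X}_{h_0}$, then deform it to the general fiber using the obstruction-free infinitesimal theory of Corollary 9.2, and finally invoke openness of stability in flat families to conclude non-emptiness of $M_{\mtc{X}_h,A}({\bf{v}})$ on the general $\mtc{X}_h$.

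First, I would produce a $\mu_A$-stable sheaf on $\mtc{X}_{h_0}$ realizing the character ${\bf{v}}$. By Proposition \ref{2}, the stack $\mtc{P}_H({\bf{v}})$ on $\mtc{X}_{h_0}$ is non-empty since $r\geq 2$ and $\Delta\geq 0$, so a general member $\mtc{E}_{h_0}\in\mtc{P}_H({\bf{v}})$ exists and is $H$-prioritary. Because $\alpha\notin\mathfrak{C}$, the case $\mu_H(\pi_{*}\mtc{E}_{h_0})=\mu_H(\mtc{V})$ for an exceptional bundle $\mtc{V}$ on $\mb{P}^2$ is excluded. The weak DL-condition for ${\bf{v}}$ therefore lets me apply Theorem 8.3 (the stability theorem immediately preceding \ref{6}), which yields that $\mtc{E}_{h_0}$ is $\mu_A$-stable for $A=H-\sum\varepsilon_iE_i$ whenever $0<\varepsilon_i\ll 1$.

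Second, I would deform $\mtc{E}_{h_0}$ along $\mtc{X}\to\mtc{H}'$. The scheme $\mtc{H}'$ is irreducible (as an open subset of the configuration scheme of $m$ distinct points in $\mb{P}^2$), so $h_0$ and a general $h$ lie in the same irreducible component. Corollary 9.2 supplies, at each infinitesimal step over an Artinian thickening of $h_0\in\mtc{H}'$, a lift of $\mtc{E}_{h_0}$ with vanishing obstruction. I would assemble these lifts into a formal deformation and then algebraize it, either via Grothendieck's existence theorem applied to the projective family $\mtc{X}\to\mtc{H}'$, or by invoking smoothness of the relative prioritary stack $\mtc{P}_{\mtc{X}/\mtc{H}',H}({\bf{v}})\to\mtc{H}'$ at $[\mtc{E}_{h_0}]$ (which already follows from the vanishing in Corollary 9.2 and the standard deformation theory of torsion-free sheaves). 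Either way, one obtains a flat family $\mtc{E}$ of sheaves on an étale neighborhood $U\to\mtc{H}'$ of $h_0$, restricting to $\mtc{E}_{h_0}$ on the central fiber.

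Third, I would propagate the $\mu_A$-stability. For fixed $A=H-\sum\varepsilon_iE_i$ with $0<\varepsilon_i\ll 1$, $\mu_A$-stability is an open condition in flat families of torsion-free sheaves on the smooth projective family $\mtc{X}_U\to U$. Since stability holds at the central fiber, it holds on a non-empty Zariski open subset of $U$, whose image in the irreducible $\mtc{H}'$ contains a non-empty open set. Hence for a general $h\in\mtc{H}'$ the fiber $\mtc{E}_h$ is $\mu_A$-stable on $\mtc{X}_h$ with character ${\bf{v}}$, proving $M_{\mtc{X}_h,A}({\bf{v}})\neq\emptyset$.

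I expect the main technical obstacle to be the passage from formal deformations over $\widehat{\mtc{O}}_{\mtc{H}',h_0}$ to an honest family on an étale neighborhood. The obstruction-freeness from Corollary 9.2 is only infinitesimal, so I would justify the algebraization carefully, either by noting that the moduli stack of prioritary sheaves is of finite type over the base and smooth in a neighborhood of $[\mtc{E}_{h_0}]$ (so formally smooth lifts extend to étale-local sections), or by appealing to Artin approximation. The remaining ingredients---the weak DL condition forcing stability on $\mtc{X}_{h_0}$, and openness of stability---are already established earlier in the paper.
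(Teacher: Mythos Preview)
Your proposal is correct and follows essentially the same approach as the paper: produce a $\mu_A$-stable sheaf on the collinear blow-up $\mtc{X}_{h_0}$ via the weak DL-condition and the stability theorem of Section~6.3, then deform it over $\mtc{H}'$ using the vanishing of obstructions from Corollary~9.2, and conclude by openness of stability. The paper's own argument is the short paragraph immediately preceding the statement of Theorem~\ref{6}, which sketches exactly these three steps without further detail; your treatment of the algebraization step (via smoothness of the relative prioritary stack or Artin approximation) is more explicit than what the paper provides, but this is a refinement rather than a different method.
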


\begin{theorem}\label{4}
Let ${\bf{v}}=(r,\nu,\Delta)$ be a character such that $\nu({\bf{v}})=\alpha H-\sum \beta_iE_i$ with $-1\leq \beta_i\leq 0$, $\alpha-\sum\beta_i\geq-1$, and $\Delta({\bf{v}})\geq0$. Let $A=H-\sum \varepsilon_iE_i$ be a polarization on any surfaces $\mtc{X}_h$, $h\in\mtc{H}'$. If $M_{\mtc{X}_{h_0},A}({\bf{v}})$ is non-empty, then ${\bf{v}}$ is a character on $\mtc{X}_h$ satisfying weak Brill-Noether property for general $h\in\mtc{H}'$.
\end{theorem}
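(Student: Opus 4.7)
The approach is to deform a non-special $\mu_A$-stable sheaf from the central fiber $\mtc{X}_{h_0}$ to a generic nearby fiber and apply upper semicontinuity of cohomology. By hypothesis $M_{\mtc{X}_{h_0},A}(\mathbf{v})$ is non-empty, so for generic $A$ one can select a $\mu_A$-stable $\mtc{E}_0 \in M_{\mtc{X}_{h_0},A}(\mathbf{v})$ (if only strictly semistable sheaves exist with character $\mathbf{v}$, replace $\mathbf{v}$ by the Chern character of a Jordan-H\"{o}lder factor, run the argument there, and reassemble at the end). Since $\mtc{E}_0$ is $\mu_A$-semistable it is automatically $H$-prioritary, so $\mtc{E}_0 \in \mtc{P}_{\mtc{X}_{h_0},H}(\mathbf{v})$. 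Theorem \ref{3}, whose hypotheses $(\nu.E_i) \geq -1$ and $(\nu'.L) \geq -1$ are matched by the present assumptions $-1 \leq \beta_i \leq 0$ and $\alpha - \sum \beta_i \geq -1$ (possibly after the pushforward reduction in its proof, contracting each $E_i$ with $\beta_i < 0$), ensures that the non-special locus in $\mtc{P}_{\mtc{X}_{h_0},F_1,\ldots,F_m,H}(\mathbf{v})$ is Zariski-open and dense. Intersecting this with the open dense stable locus, I can arrange that $\mtc{E}_0$ is both $\mu_A$-stable and non-special.

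Next I apply Corollary 9.2 to deform $\mtc{E}_0$ to the nearby surfaces. For small $\varepsilon_i$ one has $K_X \cdot A = -3 + \sum \varepsilon_i < 0$, so Serre duality and $\mu_A$-stability yield $\Ext^2(\mtc{E}_0,\mtc{E}_0) = \Hom(\mtc{E}_0, \mtc{E}_0(K_X))^{*} = 0$, killing the obstruction in Theorem 9.1(2). Thus $\mtc{E}_0$ lifts compatibly to every order over $h_0$, and by Grothendieck existence together with Artin approximation the formal lift algebraizes to a coherent sheaf $\mtc{E}$ on $\mtc{X}_U \to U$ flat over an \'{e}tale neighborhood $U \subset \mtc{H}'$ of $h_0$, with $\mtc{E}|_{\mtc{X}_{h_0}} = \mtc{E}_0$ and each fiber $\mtc{E}_h$ having Chern character $\mathbf{v}$ (Chern classes are locally constant in flat families).

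Finally, upper semicontinuity of $h \mapsto h^i(\mtc{X}_h, \mtc{E}_h)$ applied to each $i \in \{0,1,2\}$ shows that the two cohomology groups that vanish on the central fiber continue to vanish on a Zariski-open $U' \subset U$ containing $h_0$. For any $h \in U'$, the sheaf $\mtc{E}_h$ is then a torsion-free coherent sheaf on $\mtc{X}_h$ with Chern character $\mathbf{v}$ and at most one non-zero cohomology, verifying the weak Brill-Noether property per Definition \ref{1}. The main obstacle I expect is reconciling the slope hypothesis $\alpha - \sum \beta_i \geq -1$ of the present theorem with the $(\nu'.L) \geq -1$ condition of Theorem \ref{3}; when some $\beta_i$ are strictly negative the two are not literally identical, and one has to trace through the pushforward induction in Theorem \ref{3}'s proof to confirm that the resulting character on the smaller collinear blow-up still admits a non-special good-bundle construction in the range permitted here.
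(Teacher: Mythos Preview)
Your approach is correct and matches the paper's: both arguments deform a non-special $\mu_A$-stable sheaf from the collinear fibre $\mtc{X}_{h_0}$ to nearby fibres using the unobstructedness of Corollary 9.2 (i.e.\ $\Ext^2(\mtc{E}_0,\mtc{E}_0)=0$ from stability and $K_X.A<0$) and then apply upper semicontinuity of cohomology; the paper's proof is the single paragraph preceding the theorem and contains no further detail beyond what you have supplied. Your caution about reconciling the hypothesis $\alpha-\sum\beta_i\geq -1$ with Theorem \ref{3}'s condition $(\nu'.L)\geq -1$ is well placed and is not addressed in the paper either: since all $\beta_i\leq 0$ one has $\nu'=\alpha H$, so Theorem \ref{3} as stated literally requires $\alpha\geq -1$, which is stronger than what is assumed here.
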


\section{Birational geometry of Hilbert schemes of $X_m$}

It is interesting to figure out the birational properties of moduli spaces. In \cite{CH18}, Coskun and Huizenga reveal the relation between the strong Bogomolov inequalities and the nef cone of the moduli space of sheaves on a surface. The ample cone of the Hilbert scheme of points on $\mb{P}^2$ was worked out in \cite{LQZ03} and \cite{ABCH13}. For del Pezzo surfaces, the ample cone of Hilbert schemes of points is described in \cite{BC13} and \cite{BB}. In this section, we will study the ample cone of the Hilbert schemes of points on the blow-up of $\mb{P}^2$ along $m$ distinct collinear points. 

Let $X=X_m$ be the blow-up of $\mb{P}^2$ along $m$ collinear points. Let $X^{[n]}$ denote the Hilbert scheme parameterizing zero-dimensional schemes of length $n$. Let $X^{(n)}=X^n/\mathfrak{S}_n$ denote the n-th symmetric product of $X$. There is a natural morphism $h:X^{[n]}\rightarrow X^{(n)}$, called the \emph{Hilbert-Chow morphism}, that maps a zero dimensional scheme $Z$ of length $n$ to its support weighted by multiplicity. 

In \cite{Fog73}, Fogarty determined the Picard group of $X^{[n]}$ in terms of the Picard group of $X$. A line bundle $\mtc{L}$ on $X$ naturally determines a line bundle $\mtc{L}^{[n]}$ as follows: $\mtc{L}$ gives rise to a line bundle $\mtc{L}\boxtimes\cdots\boxtimes\mtc{L}$ on $X^n$, which is invariant under the action of the symmetric group $\mathfrak{S}_n$, therefore $\mtc{L}\boxtimes\cdots\boxtimes\mtc{L}$ descends to a line bundle $\mtc{L}_{X^{(n)}}$ on $X^{(n)}$; the pull-back $\mtc{L}[n]:=h^{*}\mtc{L}_{X^{(n)}}$ gives the desired line bundle on $X^{[n]}$.

Let $B$ be the class of the exceptional divisor of the Hilbert-Chow morphism, which parameterizes non-reduced schemes in $X^{[n]}$. Fogarty proves in \cite{Fog73} that if the irregularity $q(X)=0$, then $$\Pic(X^{[n]})\simeq \Pic(X)\oplus \mb{Z}\cdot \frac{B}{2}.$$ As a consequence, the N\'{e}ron-Severi space $N^1(X^{[n]})$ is spanned by $N^1(X)$ and $B$.

Let $\mtc{L}$ be an ample line bundle on $X$. Then $\mtc{L}^{\boxtimes n}$ is an ample line bundle on $X^n$ invariant under the $\mathfrak{S}_n$-action. As a result, $\mtc{L}^{\boxtimes n}$ descends to an ample bundle $\mtc{L}^{(n)}$ on $X^{(n)}$. Since the Hilbert-Chow morphism is birational, then the induced line bundle $\mtc{L}[n]$ is big and nef on $X^{[n]}$. However, since $\mtc{L}[n]$ has degree zero on the fibres of the Hilbert-Chow morphism, then it is not ample and hence it lies on the boundary of the nef cone of $X^{[n]}$.

Given a line bundle $\mtc{L}$ on $X$, we consider the short exact sequence $$0\longrightarrow\mtc{I}_Z\otimes \mtc{L}\longrightarrow\mtc{L}\longrightarrow\mtc{L}|_Z\longrightarrow0,$$ which induces an inclusion $H^0(X,\mtc{L}\otimes\mtc{I}_Z)\subseteq H^0(X,\mtc{L})$. Suppose that $N>n$, then the inclusion induces a rational map $$\psi_{\mtc{L}}:X^{[n]}\dashrightarrow \Gr(N-n,N).$$ Denote by $D_L(n):=\psi_{\mtc{L}}^{*}\mtc{O}_{\Gr(N-n,N)}(1)$ the pull-back of $\mtc{O}_{\Gr(N-n,N)}(1)$. By the Grothendieck-Riemann-Roch Theorem, one can show that the class of $D_{\mtc{L}}(n)$ is $$D_{\mtc{L}}(n)=\mtc{L}[n]-\frac{B}{2}.$$ As $\mtc{O}_{\Gr(N-n,N)}(1)$ is very ample, then the base locus of $D_{\mtc{L}}(n)$ is contained in the indeterminacy locus of $\psi_{\mtc{L}}$. If $\psi_{\mtc{L}}$ is a morphism, then $D_{\mtc{L}}(n)$ is base point free and in particular nef.

\begin{defn}
A line bundle $\mtc{L}$ on $X$ is called \textup{$k$-very ample} if the restriction map $$H^0(X,\mtc{L})\longrightarrow H^0(X,\mtc{L}|_Z)$$ is surjective for every zero dimensional scheme $Z$ of length at most $k+1$.
\end{defn}

Let $\mtc{L}$ be an $(n-1)$-very ample line bundle on a surface $X$ and assume that $h^0(X,\mtc{L})=N$ and $h^i(X,\mtc{L})=0$ for $i>0$. Then $H^i(X,\mtc{L}\otimes \mtc{I}_Z)=0$ for any $i>0$ and any $Z\in X^{[n]}$. Let $$\Xi_n\subseteq X^{[n]}\times X$$ be the universal family and let $\pi_1$ and $\pi_2$ be the natural projections. By cohomology and base change, the $\pi_{1*}(\pi_2^{*}\mtc{L}\otimes\mtc{I}_{\Xi_n})$ is a vector bundle of rank $N-n$ on $X^{[n]}$. By the universal property of the Grassmannian, the map $\psi_{\mtc{L}}:X^{[n]}\longrightarrow \Gr(N-n,N)$ is a morphism. It follows that $D_{\mtc{L}}(n)=\mtc{L}[n]-\frac{B}{2}$ is base point free.

\begin{lemma} \textup{(\cite{BS88})}
Let $\mtc{L}_i$ be a $k_i$-ample line bundle on a projective smooth surface $X$, where $i=1,...,n$. Then $\mtc{L}_1\otimes\cdots\otimes\mtc{L}_n$ is $(k_1+\cdots+k_n)$-ample.
\end{lemma}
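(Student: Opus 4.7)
The plan is to prove the lemma by induction on $n$, with all the content concentrated in the case $n=2$. For the inductive step, write $\mtc{L}_1\otimes\cdots\otimes\mtc{L}_n = (\mtc{L}_1\otimes\cdots\otimes\mtc{L}_{n-1})\otimes\mtc{L}_n$; the inductive hypothesis tells us the first factor is $(k_1+\cdots+k_{n-1})$-very ample, and then the $n=2$ case yields the conclusion. So everything reduces to showing: if $\mtc{L}$ is $k$-very ample and $\mtc{M}$ is $\ell$-very ample, then $\mtc{L}\otimes\mtc{M}$ is $(k+\ell)$-very ample.

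For the $n=2$ case, fix any zero-dimensional subscheme $Z\subset X$ of length at most $k+\ell+1$; I need to show that the restriction map
$$H^0(X,\mtc{L}\otimes\mtc{M})\longrightarrow H^0(Z,(\mtc{L}\otimes\mtc{M})|_Z)$$
is surjective. The standard reduction is to a separation property: for every length-$1$ closed subscheme $W\subset Z$ with residual subscheme $Z'\subset Z$ (so that $\mtc{O}_Z/\mtc{O}_{Z'}$ is supported at a single point and $\length(Z')\leq k+\ell$), it suffices to produce $\sigma\in H^0(X,\mtc{L}\otimes\mtc{M})$ that vanishes on $Z'$ but not on $Z$. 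A diagram chase on the exact sequence $0\to\mtc{I}_{Z'}/\mtc{I}_Z\to\mtc{O}_Z\to\mtc{O}_{Z'}\to 0$ tensored with $\mtc{L}\otimes\mtc{M}$, combined with induction on $\length(Z)$, then promotes this separation to the full surjectivity claim.

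To construct $\sigma$, decompose $Z'=Z'_1\cup Z'_2$ with $\length(Z'_1)\leq k$ and $\length(Z'_2)\leq\ell$, arranged so that $\mtc{I}_{Z'_1}\cdot \mtc{I}_{Z'_2}\subseteq\mtc{I}_{Z'}$ locally at each point of the support. Then $k$-very ampleness of $\mtc{L}$ applied to the length-$(\leq k+1)$ subscheme $Z'_1\cup W$ produces $s_1\in H^0(X,\mtc{L})$ vanishing on $Z'_1$ with any prescribed nonzero value at $W$; symmetrically, $\ell$-very ampleness of $\mtc{M}$ applied to $Z'_2\cup W$ produces $s_2\in H^0(X,\mtc{M})$ vanishing on $Z'_2$ and nonzero at $W$. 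The section $\sigma:=s_1\otimes s_2$ then vanishes on all of $Z'$ (by the compatibility condition on the two ideals) and is nonzero at $W$, as required.

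The main obstacle is executing the decomposition step when $Z$ is non-reduced: one must split the Artinian scheme $Z'$ at each support point into two subschemes of prescribed length bounds such that the product of their ideals lies inside $\mtc{I}_{Z'}$. This is purely a local-algebra problem about filtering a local Artinian ring of length $\leq k+\ell$ by two ideals of colengths $\leq k$ and $\leq\ell$ whose product annihilates the ring, and it is the technical heart of the argument of Beltrametti--Sommese; once this local decomposition is in hand, the product-of-sections construction and the induction on $n$ proceed formally.
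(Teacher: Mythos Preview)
The paper does not give a proof of this lemma; it simply cites \cite{BS88} and uses the result as a black box. Your outline is essentially the Beltrametti--Sommese strategy: reduce to $n=2$ by induction, then for a length-$(\le k+\ell+1)$ subscheme $Z$ pass to a colength-one subscheme $Z'\subset Z$ and manufacture a product section $s_1\otimes s_2$ lying in $\mtc{I}_{Z'}\otimes\mtc{L}\otimes\mtc{M}$ but not in $\mtc{I}_{Z}\otimes\mtc{L}\otimes\mtc{M}$.

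One imprecision is worth flagging. You describe $W$ as a ``length-$1$ closed subscheme of $Z$'' with residual $Z'$, and then form ``$Z'_1\cup W$'' in order to apply $k$-very ampleness of $\mtc{L}$. When $Z$ is non-reduced at the relevant point this does not parse: $\mtc{I}_{Z'}/\mtc{I}_{Z}$ is a length-one \emph{quotient} supported at a point $p$, not a closed subscheme complementary to $Z'$, so there is no literal scheme $Z'_1\cup W$ of length $\le k+1$. What is actually required is to choose $s_1\in H^0(\mtc{I}_{Z'_1}\otimes\mtc{L})$ and $s_2\in H^0(\mtc{I}_{Z'_2}\otimes\mtc{M})$ so that the product $s_1s_2$ lands in $\mtc{I}_{Z'}\setminus\mtc{I}_{Z}$ at $p$; arranging the decomposition $Z'_1,Z'_2$ and this last nonvanishing simultaneously is exactly the local-algebra step you correctly identify as the technical heart and defer to \cite{BS88}. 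Since that deferred step is where all the difficulty lies, your sketch is a faithful summary of the cited argument rather than an independent proof --- which is also all the paper itself offers.
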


\begin{proposition}
Let $X$ be the blow-up of $\mb{P}^2$ along $m$ distinct collinear points. Then the divisor $aH+\sum b_i(H-E_i)$ with $a,b_1,...,b_m\geq1$ is very ample. In particular, the divisor $(n-1)(aH+\sum b_i(H-E_i))$ with $a,b_1,...,b_m\geq1$ is $(n-1)$-very ample.
\end{proposition}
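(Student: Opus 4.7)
The plan is to exhibit $D := aH + \sum_i b_i(H-E_i)$ as the pullback of a very ample line bundle under a closed embedding of $X$ into a product of projective spaces. Concretely, I would consider the morphism
\[
\phi = (\pi, P_1, \dots, P_m) \colon X \longrightarrow \mb{P}^2 \times (\mb{P}^1)^m,
\]
where $\pi$ is the blow-down (attached to the complete linear system $|H|$) and $P_i \colon X \to \mb{P}^1$ is the morphism induced by the base-point-free pencil $|H - E_i|$, that is, the projection from $p_i$ resolved on $X$. By construction $\phi^* \mtc{O}(a, b_1, \dots, b_m) \cong \mtc{O}_X(D)$, and $\mtc{O}(a, b_1, \dots, b_m)$ is very ample on $\mb{P}^2 \times (\mb{P}^1)^m$ for $a, b_i \geq 1$ by the standard Segre construction. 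It therefore suffices to show that $\phi$ is a closed embedding.

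Since $X$ is proper, this reduces to verifying that $\phi$ is set-theoretically injective and unramified. For injectivity on closed points, suppose $\phi(x) = \phi(y)$. If $\pi(x) = \pi(y) \notin \{p_1, \dots, p_m\}$, then $x = y$ because $\pi$ is an isomorphism there. Otherwise $\pi(x) = \pi(y) = p_k$ for some $k$, so $x, y \in E_k$; since $(H-E_k) \cdot E_k = 1$, the restriction $P_k|_{E_k}$ is a degree-one morphism $E_k \to \mb{P}^1$ between smooth rational curves, hence an isomorphism, and $P_k(x) = P_k(y)$ forces $x = y$. For injectivity of the differential, at $x \notin \bigcup_i E_i$ the map $d\pi_x$ is already an isomorphism. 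At $x \in E_k$, the kernel of $d\pi_x$ is the line $T_x E_k$; on this line $dP_j$ vanishes for $j \neq k$ (since $P_j$ factors through $\pi$ near $x$, as $p_k \neq p_j$) while $dP_k$ is an isomorphism (because $P_k|_{E_k}$ is), so $d\phi_x$ is injective. A proper, injective, unramified morphism of finite-type schemes over $\mb{C}$ is a closed embedding, and hence $D$ is very ample.

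The second statement follows at once from the preceding Beltrametti-Sommese lemma: a very ample line bundle is $1$-very ample, and the tensor product of $n-1$ such line bundles is $(n-1)$-very ample. The only non-formal input in this outline is the claim that $P_k|_{E_k}$ is an isomorphism, which is the main (and only real) obstacle; this is immediate from the intersection number $(H-E_k) \cdot E_k = 1$ together with $E_k \cong \mb{P}^1$, so the argument is essentially a direct geometric verification.
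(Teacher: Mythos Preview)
Your argument is correct. The paper's own proof is a one-line appeal to the criterion that very ampleness is equivalent to separating points and tangent directions, leaving the verification implicit; your route carries out essentially the same verification, but organized through the auxiliary morphism $\phi=(\pi,P_1,\dots,P_m)$ into $\mb{P}^2\times(\mb{P}^1)^m$ rather than working directly with $|D|$. The content is the same---the injectivity and differential checks you perform are exactly ``separates points and tangent directions''---but your factorization has the pleasant feature that the only nontrivial case is localized to a single exceptional divisor $E_k$ at a time, where the key observation $(H-E_k)\cdot E_k=1$ does all the work. The paper's sketch would in principle require producing explicit sections of $\mtc{O}_X(D)$ witnessing the separation, and the most natural such sections are precisely products of sections of $\mtc{O}_X(H)$ and $\mtc{O}_X(H-E_i)$, which is what your construction packages. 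So the two approaches are equivalent in substance; yours is simply more explicit.
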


\begin{proof}
This is immediate from the criterion that a divisor is very ample if and only if it separates points and tangent directions (\cite{Har77}).
\end{proof}

\begin{theorem}
Let $X$ be the blow-up of $\mb{P}^2$ along $m$ distinct collinear points. Then the nef cone of $X^{[n]}$ is the cone $\alpha H[n]-\sum \beta_iE_i[n]+\gamma\frac{B}{2}$ satisfying the inequalities $$\gamma\leq 0,\quad \beta_i+(n-1)\gamma\geq0,\quad \textup{and }\alpha+(n-1)\gamma\geq \sum\beta_i.$$
\end{theorem}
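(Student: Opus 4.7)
The plan is to establish the equality by a two-way containment. For the forward direction I verify that each extremal ray generator of the proposed region is a nef class. For the reverse direction I produce $m+2$ test curves whose intersection pairings with a class $D = \alpha H[n] - \sum \beta_i E_i[n] + \gamma B/2$ reproduce the $m+2$ stated inequalities.

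The $m+2$ linear inequalities in $m+2$ unknowns $(\alpha,\beta_1,\ldots,\beta_m,\gamma)$ cut out a simplicial cone. Turning all but one inequality into an equality identifies the $m+2$ extremal ray generators: (i) $H[n]$; (ii) $(H-E_k)[n]$ for $k=1,\ldots,m$; and (iii) $(n-1)\bigl((m+1)H-\sum_i E_i\bigr)[n]-B/2$. Classes (i) and (ii) are pullbacks under the Hilbert--Chow morphism of globally generated classes on $X^{(n)}$, since $H$ and $H-E_k$ are base-point-free on $X$, and are therefore nef. For (iii), the divisor $(m+1)H-\sum E_i = H+\sum_{i=1}^m(H-E_i)$ is very ample on $X$ by Proposition~8.11, so its $(n-1)$-fold multiple is $(n-1)$-very ample by the preceding lemma, and hence $D_{(n-1)((m+1)H-\sum E_i)}(n)$ is base-point-free by the discussion preceding Definition~8.10. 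Taking non-negative $\mathbb{R}$-linear combinations shows the entire region lies in the nef cone.

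For the reverse inclusion I introduce three families of test curves. First let $C_0$ be a fiber of the Hilbert--Chow morphism: fix $n-2$ general reduced points of $X$ and let the remaining length-$2$ subscheme run through $\mathbb{P}(T_pX)\cong\mathbb{P}^1$ at a fixed point $p$. One has $L[n]\cdot C_0 = 0$ for every $L\in\operatorname{Pic}(X)$ and $B\cdot C_0=-2$, giving $D\cdot C_0=-\gamma$, which forces $\gamma\leq 0$. Second, for each $j=1,\ldots,m$, let $C_j\subset X^{[n]}$ be the image under the closed immersion $E_j^{[n]}\hookrightarrow X^{[n]}$ of a general line in $E_j^{[n]}\cong\mathbb{P}^n$. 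Since Hilbert--Chow is an isomorphism on $E_j^{[n]}$, we have $L[n]|_{E_j^{[n]}} = (L|_{E_j})^{(n)}\cong \mathcal{O}_{\mathbb{P}^n}(L\cdot E_j)$, and the discriminant locus of binary $n$-forms identifies $B|_{E_j^{[n]}}$ with $\mathcal{O}_{\mathbb{P}^n}(2(n-1))$. Using $H\cdot E_j=0$, $E_i\cdot E_j=-\delta_{ij}$, this gives $D\cdot C_j = \beta_j + (n-1)\gamma$. Third, let $C_{m+1}$ be a general line in the embedded $L^{[n]}\cong\mathbb{P}^n$, where $L=H-\sum E_i$ is the smooth $\mathbb{P}^1$ given by the proper transform of the line through the $m$ collinear points; the same computation, now using $H\cdot L = E_i\cdot L = 1$, yields $D\cdot C_{m+1} = \alpha - \sum\beta_i + (n-1)\gamma$. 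Nefness of $D$ against $C_0,C_1,\ldots,C_{m+1}$ is therefore equivalent to the three stated inequalities.

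The main technical hurdle I anticipate is pinning down the restriction formulas $L[n]|_{E_j^{[n]}}$ and $B|_{E_j^{[n]}}$ precisely, and in particular disentangling Fogarty's primitive class $B/2$ from the reduced exceptional divisor $B$ of Hilbert--Chow, so that the calculation $(B/2)\cdot C_j = n-1$ on a line in $E_j^{[n]}\cong\mathbb{P}^n$ is rigorously justified. Once these descent and restriction identities are in hand, the combination of the two containments gives exactly the claimed description of $\operatorname{Nef}(X^{[n]})$.
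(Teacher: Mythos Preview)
Your proposal is correct and follows essentially the same approach as the paper. The paper uses the identical test curves---a Hilbert--Chow fiber and the $g^1_n$-curves $E_i(n)$, $L(n)$, which are precisely your lines in $E_i^{[n]}\cong\mb{P}^n$ and $L^{[n]}\cong\mb{P}^n$---and for the other containment decomposes $D$ as a nonnegative combination of $H[n]$, $(H-E_i)[n]$, and $(n-1)\bigl((m+1)H-\sum E_i\bigr)[n]-B/2$, which is equivalent to your verification that these extremal rays are nef.
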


\begin{proof}
Let $R$ be a general fibre of the Hilbert-Chow morphism over the singular locus of $X^{(n)}$. Then the curve $R$ has the intersection number $(R.B)=-2$ and $(R.\mtc{L}[n])=0$ for any line bundle $\mtc{L}$ on $X$. As a consequence, the coefficient of $B$ in any nef line bundle on $X^{[n]}$ is non-positive.

Let $C$ be a curve in $X$ that admits a $g^1_n$. The morphism $f:C\rightarrow \mb{P}^1$ defined by the $g^1_n$ induces a curve $C(n)$ in $X^{[n]}$.
Now let $L$ be the special line $H-E_1-\cdots-E_m$ on $X$, and then the induced curve $L(n)$ satisfies the intersection numbers $$(L(n).H[n])=1,(L(n).E_i[n])=1,(L(n),B/2)=n-1.$$ It follows that $$\alpha+(n-1)\gamma\geq \sum\beta_i.$$ Similarly, intersecting with $E_i(n)$, one obtains that $$\beta_i+(n-1)\gamma\geq0.$$

On the other hand, let $D=\alpha H[n]-\sum \beta_iE_i[n]+\gamma\frac{B}{2}$ be a divisor satisfying the inequalities. Then we may write $$D=\left(\alpha-\sum \beta_i+(n-1)\gamma\right)H[n]-\gamma\left((n-1)(H+\sum(H-E_i))[n]-\frac{B}{2}\right)$$$$\quad\quad\quad\quad\quad\quad+\sum (\beta_i-(n-1))(H-E_i)[n]$$ as the sum of three nef divisors. Thus we conclude that the nef cone of $X^{[n]}$ is given by the above inequalities.

\end{proof}

\begin{corollary}
The Hilbert scheme $X^{[n]}$ is log Fano.
\end{corollary}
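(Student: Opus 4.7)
The plan is to verify the log Fano property directly by exhibiting an effective $\mb{Q}$-divisor $\Delta$ on $X^{[n]}$ such that the pair $(X^{[n]},\Delta)$ is klt and $-(K_{X^{[n]}}+\Delta)$ is ample. Since $X^{[n]}$ is smooth, the klt condition reduces to asking that all coefficients of $\Delta$ be strictly less than $1$, provided its support can be arranged to have simple normal crossings.

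First I would compute $-K_{X^{[n]}}$. Using the identity $K_{X^{[n]}} = K_X[n]$ for the Hilbert scheme of a smooth surface together with $K_X = -3H+\sum E_i$ on our blow-up, one has $-K_{X^{[n]}} = 3H[n]-\sum E_i[n]$, which corresponds to the triple $(\alpha,\beta_i,\gamma) = (3,1,0)$ in the coordinates of the nef cone theorem. For $m\geq 4$ this triple violates the inequality $\alpha+(n-1)\gamma\geq \sum \beta_i$, so $-K_{X^{[n]}}$ is not even nef; for $m\leq 3$ it is nef but not ample since $\gamma=0$. Either way, a nontrivial correction by an effective boundary is required.

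Using the effective divisors $L[n]$, where $L = H-\sum E_i$ is the proper transform of the special line on $X$, and $B/2$ as building blocks, set
\[
\Delta := c\,L[n] + d\cdot \tfrac{B}{2},\qquad c,d \in \mb{Q}_{>0}.
\]
Then
\[
-(K_{X^{[n]}}+\Delta) = (3-c)H[n] - (1-c)\sum E_i[n] - d\cdot \tfrac{B}{2},
\]
which corresponds to $(3-c,\,1-c,\,-d)$. By the nef cone theorem combined with Kleiman's criterion, this class is ample if and only if $d>0$, $1-c > (n-1)d$, and $(m-1)c > m-3 + (n-1)d$. Provided $-K_X$ is big on $X$, these three strict inequalities are simultaneously realizable: choose $c \in \bigl(\max\{0,(m-3)/(m-1)\},\,1\bigr)$ and then $d>0$ sufficiently small.

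Finally I would verify the klt condition. Both coefficients $c$ and $d/2$ lie in $(0,1)$ by construction, so it remains to arrange simple normal crossings. This can be done by passing to a sufficiently divisible multiple and applying Bertini's theorem to replace the specific representatives of $L[n]$ and $B/2$ by general smooth members of their $\mb{Q}$-linear equivalence classes, using as an ample auxiliary the base-point-free divisor $D_{\mtc{L}}(n) = \mtc{L}[n] - B/2$ for an $(n-1)$-very ample $\mtc{L}$ constructed in the previous section. The main obstacle I anticipate lies precisely in this snc arrangement, since the linear system $|L[n]|$ is restrictive and the boundary $B$ is singular along the deeper Hilbert-Chow strata; the remedy is to perturb $\Delta$ by a small positive multiple of $D_{\mtc{L}}(n)$ to render Bertini applicable, and then absorb the perturbation by shrinking $c$ and $d$ slightly so that the three inequalities of the previous step remain strict.
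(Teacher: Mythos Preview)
Your approach is essentially the paper's: it too sets $\Delta=(1-\delta)\,L[n]+\varepsilon\,\tfrac{B}{2}$ (i.e.\ your $c=1-\delta$, $d=\varepsilon$), computes $-K_{X^{[n]}}-\Delta=(2+\delta)H[n]-\delta\sum_i E_i[n]-\varepsilon\,\tfrac{B}{2}$, and checks ampleness against the nef cone inequalities of the preceding theorem. The paper then asserts that smoothness of $X^{[n]}$ gives the klt property, without any Bertini argument.

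One comment on your final paragraph. The perturbation you outline does not actually fix the issue you correctly flag: adding a small multiple of the base-point-free class $D_{\mtc{L}}(n)$ and taking a general member leaves the summands $c\,L[n]$ and $\tfrac{d}{2}B$ in $\Delta$ unchanged, with the same coefficients; Bertini lets you choose the \emph{new} summand generically, not move the rigid divisors $L[n]$ and $B$ inside their own (zero-dimensional) linear systems. So ``replacing the specific representatives by general smooth members'' is not available here. A genuine verification has to confront the singularities of $L[n]$ and $B$ directly: for instance, over the locus of reduced subschemes one checks that $L[n]$ is \'etale-locally a union of coordinate hyperplanes (one for each point of $Z$ lying on $L$), hence SNC, while the coefficient $d/2$ on $B$ can be taken as small as one likes. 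The paper glosses over this point entirely; your proposal is more scrupulous in noticing it, but the proposed remedy does not close the gap.
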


\begin{proof}
Choose a boundary divisor $\Delta=(1-\delta)L[n]+\varepsilon\frac{B}{2}$ such that $0<\delta\ll1$ and $0<(n-1)\varepsilon<\delta$. Then the divisor $$-K_{X^{[n]}}-\Delta=-K_X[n]-\Delta=(2+\delta)H[n]-\sum_i\delta E_i[n]-\varepsilon\frac{B}{2}$$ is ample. As $X^{[n]}$ is smooth, then $(X^{[n]},\Delta)$ is a klt pair, and hence log Fano.
\end{proof}

\begin{Ques}
Let $X$ be the blow-up of $\mb{P}^2$ along $m$ collinear points, and $A=H-\sum \varepsilon_iE_i$ a polarization. Is the moduli space $M_A({\bf{v}})$ a log Fano variety?
\end{Ques}

\bibliographystyle{alpha}
\bibliography{citation}

\newcommand{\etalchar}[1]{$^{#1}$}
\begin{thebibliography}{ABCH13}

\bibitem[ABCH13]{ABCH13}
Daniele Arcara, Aaron Bertram, Izzet Coskun, and Jack Huizenga.
\newblock The minimal model program for the \textup{Hilbert} scheme of points
  on $\mb{P}^2$ and bridgeland stability.
\newblock {\em Advances in mathematics}, 235:580--626, 2013.

\bibitem[BC13]{BC13}
Aaron Bertram and Izzet Coskun.
\newblock \textup{The birational geometry of the Hilbert scheme of points on
  surfaces}.
\newblock In {\em Birational geometry, rational curves, and arithmetic}, pages
  15--55. Springer, 2013.

\bibitem[BHL{\etalchar{+}}16]{BB}
Barbara Bolognese, Jack Huizenga, Yinbang Lin, Eric Riedl, Benjamin Schmidt,
  Matthew Woolf, and Xiaolei Zhao.
\newblock \textup{Nef cones of Hilbert schemes of points on surfaces}.
\newblock {\em Algebra \& Number Theory}, 10(4):907--930, 2016.

\bibitem[BS88]{BS88}
Mauro Beltrametti and Andrew~J Sommese.
\newblock On k-spannedness for projective surfaces.
\newblock In {\em Algebraic geometry}, pages 24--51. Springer, 1988.

\bibitem[CH18a]{CH18}
Izzet Coskun and Jack Huizenga.
\newblock The nef cone of the moduli space of sheaves and strong {B}ogomolov
  inequalities.
\newblock {\em Israel Journal of Mathematics}, 226(1):205--236, 2018.

\bibitem[CH18b]{CH16}
Izzet Coskun and Jack Huizenga.
\newblock Weak {B}rill-{N}oether for rational surfaces.
\newblock {\em Local and Global Methods in Algebraic Geometry, Contemporary
  Mathematics}, 712:81--104, 2018.

\bibitem[CH20]{CH20}
Izzet Coskun and Jack Huizenga.
\newblock {B}rill-{N}oether theorems and globally generated vector bundles on
  {H}irzebruch surfaces.
\newblock {\em Nagoya Mathematical Journal}, 238:1--36, 2020.

\bibitem[CH21]{CH21}
Izzet Coskun and Jack Huizenga.
\newblock Existence of semistable sheaves on {H}irzebruch surfaces.
\newblock {\em Advances in Mathematics}, 381, 2021.

\bibitem[CHW17]{CHW17}
Izzet Coskun, Jack Huizenga, and Matthew Woolf.
\newblock The effective cone of the moduli space of sheaves on the plane.
\newblock {\em Journal of the European Mathematical Society}, 19(5):1421--1467,
  2017.

\bibitem[DLP85]{DLP85}
J-M Dr\'{e}zet and Joseph Le~Potier.
\newblock Fibr{\'e}s stables et fibr{\'e}s exceptionnels sur $\mb{P}^2$.
\newblock In {\em Annales scientifiques de l'{\'E}cole Normale Sup{\'e}rieure},
  volume~18, pages 193--243, 1985.

\bibitem[Fog73]{Fog73}
John Fogarty.
\newblock Algebraic families on an algebraic surface, {II}, the {P}icard scheme
  of the punctual {H}ilbert scheme.
\newblock {\em American Journal of Mathematics}, 95(3):660--687, 1973.

\bibitem[GH94]{GH98}
Lothar G\"{o}ttsche and Andr\'{e} Hirschowitz.
\newblock Weak {B}rill-{N}oether for vector bundles on the projective plane.
\newblock {\em Algebraic geometry \textup{(Catania, 1993/Barcelona, 1994),
  63–74}, Lecture Notes in Pure and Appl. Math., \textup{200} Dekker, New
  York}, 1994.

\bibitem[Har77]{Har77}
Robin Hartshorne.
\newblock {\em Algebraic {G}eometry}, volume~52.
\newblock Springer, 1977.

\bibitem[Har10]{Har10}
Robin Hartshorne.
\newblock {\em Deformation {T}heory}, volume 257.
\newblock Springer, 2010.

\bibitem[HL10]{HL10}
Daniel Huybrechts and Manfred Lehn.
\newblock {\em The geometry of moduli spaces of sheaves}.
\newblock Cambridge University Press, 2010.

\bibitem[Hui16]{Hui16}
Jack Huizenga.
\newblock \textup{Effective divisors on the Hilbert scheme of points in the
  plane and interpolation for stable bundles}.
\newblock {\em J. Algebraic Geom.}, 25:19--75, 2016.

\bibitem[KO95]{KO95}
SA~Kuleshov and Dmitri~Olegovich Orlov.
\newblock Exceptional sheaves on del {P}ezzo surfaces.
\newblock {\em Izvestiya: Mathematics}, 44(3):479, 1995.

\bibitem[Laz04]{Lar04}
Robert~K Lazarsfeld.
\newblock {\em Positivity in algebraic geometry {I}: {C}lassical setting: line
  bundles and linear series}, volume~48.
\newblock Springer, 2004.

\bibitem[LP97]{LeP97}
Joseph Le~Potier.
\newblock {\em Lectures on vector bundles}.
\newblock Cambridge University Press, 1997.

\bibitem[LQZ03]{LQZ03}
W-P Li, Z~Qin, and Q~Zhang.
\newblock \textup{Curves in the Hilbert schemes of points on surfaces}.
\newblock {\em Contemporary Mathematics}, 322:89--96, 2003.

\bibitem[LZ19]{LZ19}
D~Levine and S~Zhang.
\newblock Brill-{N}oether and existence of semistable sheaves for del {P}ezzo
  surfaces, submitted preprint.
\newblock 2019.

\bibitem[Muk84]{Muk87}
Shigeru Mukai.
\newblock \textup{On the moduli space of bundles on K3 surfaces. I}.
\newblock {\em Vector bundles on algebraic varieties (Bombay, 1984)},
  11:341--413, 1984.

\bibitem[Rud94]{Rud94}
Alexei~N Rudakov.
\newblock \textup{A description of Chern classes of semistable sheaves on a
  quadric surface}.
\newblock {\em J. reine angew. Math.}, 453:113–135, 1994.

\bibitem[Wal98]{Wal98}
Charles Walter.
\newblock Irreducibility of moduli spaces of vector bundles on birationally
  ruled surfaces.
\newblock {\em Algebraic geometry \textup{(Catania, 1993/Barcelona, 1994)},
  Lecture Notes in Pure and Appl. Math., \textup{200}}, pages 201--211, 1998.

\end{thebibliography}

\end{document}